\documentclass{amsart}

\usepackage{amsmath, amsthm}
\usepackage{amssymb,amscd}
\usepackage{amsfonts}
\usepackage{latexsym}
\usepackage{mathrsfs}
\usepackage{bm}
\usepackage{graphicx}

\theoremstyle{plain}
\newtheorem{theorem}{Theorem}[section]
\newtheorem{cor}[theorem]{Corollary}
\newtheorem{lem}[theorem]{Lemma}
\newtheorem{pro}[theorem]{Proposition}

\newtheorem{Def}[theorem]{Definition}

\newcommand{\hym}{hyperbolic metric}
\newcommand{\hv}{hyperbolic volume}
\newcommand{\ha}{hyperbolic area}
\newcommand{\TS}{Teichm\"{u}ller space}
\newcommand{\Tt}{Teichm\"{u}ller theory}
\newcommand{\WP}{Weil-Petersson}
\newcommand{\kg}{Kleinian group}
\newcommand{\hd}{Hausdorff dimension}

\newcommand{\mc}{mean curvature}
\newcommand{\pc}{principal curvature}
\newcommand{\gc}{Gaussian curvature}
\newcommand{\mcf}{mean curvature flow}
\newcommand{\maxp}{maximum principle}

\newcommand{\RS}{Riemann surface}
\newcommand{\sff}{second fundamental form}
\newcommand{\hf}{height function}
\newcommand{\naf}{nearly Fuchsian}
\newcommand{\af}{almost Fuchsian}
\newcommand{\qf}{quasi-Fuchsian}
\newcommand{\qc}{quasiconformal}
\newcommand{\qcm}{quasiconformal mapping}
\newcommand{\qci}{quasicircle}
\newcommand{\lms}{limit set}

\newcommand{\gradf}{gradient function}

\newcommand{\tm}{three-manifold}
\newcommand{\ms}{minimal surface}
\newcommand{\nf}{normal flow}
\newcommand{\ee}{evolution equation}
\newcommand{\ef}{equidistant foliation}
\newcommand{\is}{incompressible surface}
\newcommand{\ins}{initial surface}
\newcommand{\es}{evolving surface}
\newcommand{\cc}{convex core}

\newcommand{\gbar}{\bar{g}}

\newcommand{\nablabar}{{\overline{\nabla}}}

\newcommand{\n}{\bm{n}}

\newcommand{\C}{\mathbb{C}}
\newcommand{\R}{\mathbb{R}}
\newcommand{\I}{\mathbb{I}}
\renewcommand{\H}{\mathbb{H}}

\newcommand{\Acal}{\mathcal{A}}

\newcommand{\inner}[2]{\langle #1\,,#2\rangle}
\newcommand{\ddl}[2]{\frac{d{#1}}{d{#2}}}

\newcommand{\ppl}[2]{\frac{\partial{#1}}{\partial{#2}}}

\DeclareMathOperator{\hyp}{hyp}%
\DeclareMathOperator{\id}{id}%
\DeclareMathOperator{\vol}{vol}%

\renewcommand{\div}{\mathop{\mathrm{div}}}

\numberwithin{equation}{section}

\allowdisplaybreaks


\def\XXint#1#2#3{{\setbox0=\hbox{$#1{#2#3}{\int}$}
    \vcenter{\hbox{$#2#3$}}\kern-.5\wd0}}

\makeatletter
\def\@citestyle{\m@th\upshape\mdseries}
\def\citeform#1{{\bfseries#1}}
\def\@cite#1#2{{%
  \@citestyle[\citeform{#1}\if@tempswa, #2\fi]}}
\@ifundefined{cite }{%
  \expandafter\let\csname cite \endcsname\cite
  \edef\cite{\@nx\protect\@xp\@nx\csname cite \endcsname}%
}{}
\makeatother

\begin{document}

\title{Mean Curvature Flows in Almost Fuchsian Manifolds}

\author{Zheng Huang}
\address{Department of Mathematics,
The City University of New York,
Staten Island, NY 10314, USA.}
\email{zheng.huang@csi.cuny.edu}

\author{Biao Wang}
\address{Department of Mathematics,
University of Toledo, Toledo, OH 43606, USA.}
\email{biao.wang@utoledo.edu}

\date{December 11, 2009}

\subjclass[2000]{Primary 53C44, Secondary 53A10, 57M05}


\begin{abstract}
An {\af} manifold is a {\qf} hyperbolic {\tm} that contains a closed incompressible {\ms} with {\pc}s everywhere
in the range of $(-1,1)$. In such a hyperbolic {\tm}, the {\ms} is unique and embedded, hence one can
parametrize these {\tm}s by their {\ms}s. We prove that any closed surface which is a graph over any fixed
surface of small {\pc}s can be deformed into the {\ms} via the {\mcf}. We also obtain an upper bound for
the hyperbolic volume of the {\cc} of $M^3$, as well as estimates on the {\hd} of the limit set.
\end{abstract}

\maketitle


\section {Introduction}
Quasi-Fuchsian {\tm} is an important class of complete hyperbolic $3$-manifolds.
In hyperbolic geometry, {\qf} manifolds
and their moduli space, the {\qf} space, have been objects of extensive study in recent decades. Incompressible
surfaces of small {\pc}s play an important role in low dimensional topology (\cite{Rub05}). We denote $M^3$ in
this paper an {\af} manifold: it is a {\qf} manifold which contains a closed incompressible {\ms} $\Sigma$ such
that the {\pc}s of $\Sigma$ are in the range of $(-1,1)$.

The space of {\af} manifolds is a subspace of the {\qf} space, of the same complex dimension $6g-6$, where
$g \ge 2$ is the genus of any closed {\is} in the manifold (\cite {Uhl83}). Understanding the structures of the {\qf}
space is a mixture of understanding the hyperbolic geometry and topology of {\qf} {\tm}s, the deformation of
{\is}s (induce injections between fundamental groups of the surface and the {\tm}), as well as the representation
theory of {\kg}s.

This notion of being {\af} (term coined in \cite{KS07}) was first studied by Uhlenbeck (\cite{Uhl83}), where she
proved several key properties of {\af} manifolds that will be vital in this work: $\Sigma$ is the only incompressible
{\ms} in $M^3$, and $M^3$ admits a foliation of parallel surfaces from $\Sigma$ to both ends.

Our convention of the {\mc} is the sum of the {\pc}s, and we always assume the genus of any {\is} of $M^3$ is at
least two. We also assume $M^3$ is not Fuchsian, or theorems are trivial. Surfaces we encounter in this paper
will always be incompressible, unless otherwise stated.

Somewhat differently from \cite {Wan08} and \cite {HW09}, where we used volume preserving {\mcf} to address
the issue of foliation of constant {\mc} surfaces, in this work, we use the usual {\mcf} to deform a rather arbitrary
closed surface $S_0$ to the {\ms} $\Sigma$. The special geometry of the ambient space ({\af}) allows us to apply
the {\mcf} to drag $S_0$ towards $\Sigma$ rapidly and change the shapes of the {\es}s until the {\mc}s become
zero.

Our main analytical tool is the {\mcf} equation, which has the following form:
\begin{equation}
   \left\{
   \begin{aligned}
      \ppl{}{t}\,F(x,t)&=-H(x,t)\nu(x,t)\ ,\\
      F(\cdot,0)&=F_{0}\ ,
   \end{aligned}
   \right.
\end{equation}
where $H(x,t)$ is the {\mc} of the evolving surface $S(t)$, and all other terms will be made transparent in
section two.

Our main result is that one can deform initial closed graphical surface $S_0$ over a fixed surface
$S$ with $|\lambda_j(S)|<1$ to the unique {\ms} in an {\af} manifold:
\begin{theorem}
Let $M^3$ be {\af} and $S$ ($S$ not necessarily the unique {\ms} of $M^3$) be any closed {\is} with {\pc}s in
$(-1,1)$ everywhere on $S$. Suppose a smooth closed surface $S_0$ in $M^3$ is a graph over $S$: there
is a constant $c_0 > 0$ such that $\langle{\n},{\nu_0}\rangle \geq c_0$, where $\n$ is the unit normal vector
on $S$ and $\nu_0$ is the unit normal vector on $S_{0}$. Then:
\begin{enumerate}
\item
the {\mcf} equation $(1.1)$ with initial surface $S(0) = S_0$ has a long time solution;
\item
the {\es}s $\{S(t)\}_{t \in \R}$ stay smooth and remain as graphs over $S$ for all time;
\item
$\{S(t)\}_{t \in \R}$ converge exponentially to the {\ms} $\Sigma$.
\end{enumerate}
\end{theorem}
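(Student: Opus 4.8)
The plan is to realize the flow as a graph over $S$, using the \ef\ of $S$ as a coordinate system, and then run the standard parabolic a priori estimates, invoking the special geometry of $M^3$ exactly twice: the mean-convexity of the far-away equidistant leaves (for the $C^0$ bound) and the $(-1,1)$ pinching of the \pc s of $S$ and of $\Sigma$ (for the gradient estimate and for the stability of $\Sigma$). First I would fix coordinates. Since the \pc s $\mu_1,\mu_2$ of $S$ lie in $(-1,1)$, a Riccati comparison in $\H^3$ shows the leaf $S_\rho$ at signed distance $\rho$ from $S$ has \pc s $(\mu_i+\tanh\rho)/(1+\mu_i\tanh\rho)\in(-1,1)$, so the normal exponential map of $S$ has no focal points; hence, as in Uhlenbeck's construction for $\Sigma$, $M^3\cong S\times\R$ with metric $d\rho^2+h_\rho$ and leaves $\{\rho=\mathrm{const}\}$, and we extend $\n$ over $M^3$ as the unit field $\partial_\rho$. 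The hypothesis $\langle\n,\nu_0\rangle\ge c_0$ then says $S_0=\{\rho=\phi_0(x)\}$ is a graph over $S$ with $\langle\n,\nu_0\rangle^{-1}\le c_0^{-1}$, and in these coordinates the \mcf\ for the graph function becomes a scalar quasilinear equation $\partial_t\phi=a^{ij}(x,\phi,\nabla\phi)\phi_{ij}+b(x,\phi,\nabla\phi)$, uniformly parabolic while $\langle\n,\nu\rangle$ is bounded below; short-time existence with $\phi(\cdot,0)=\phi_0$ is standard.

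For the $C^0$ bound: the leaf \mc\ $H(S_\rho)(x)=\sum_i(\mu_i+\tanh\rho)/(1+\mu_i\tanh\rho)$ is strictly increasing in $\rho$, tending to $\pm2$ as $\rho\to\pm\infty$, so there is an $R_0$ (uniform in $x$, as $S$ is compact) with $S_\rho$ strictly mean-convex toward $\{\rho=0\}$ whenever $|\rho|\ge R_0$. For $R\ge R_0$ the constants $\pm R$ are then a supersolution/subsolution pair for the scalar equation, so by the comparison principle $\phi(\cdot,t)$ stays in $[\inf\phi_0,\sup\phi_0]$ for all $t$; in particular every \es\ $S(t)$ remains in the fixed compact slab $K:=S\times[-R,R]$, with $R$ depending only on $S_0$.

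The crux is the gradient estimate: that $\omega:=\langle\n,\nu\rangle$ stays bounded below by a positive constant for all time, which is claim (ii). Following the Ecker--Huisken scheme adapted to a foliated ambient space, $\omega$ obeys along the flow an equation of the form
\[
\partial_t\omega=\Delta_{S(t)}\omega+\langle b,\nabla\omega\rangle+\bigl(|A|^2+\operatorname{Ric}(\nu,\nu)+q\bigr)\omega,
\]
where $A$ is the \sff\ of $S(t)$, $b$ is a bounded vector field on $S(t)$, $\operatorname{Ric}(\nu,\nu)=-2$ is the ambient Ricci curvature, and $q$ is a curvature term assembled from the \sff s of the leaves $S_\rho$ (it would vanish if the foliation were totally geodesic). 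The goal is to show, using that the leaves inherit \pc s in $(-1,1)$, that the zeroth-order coefficient $|A|^2+\operatorname{Ric}(\nu,\nu)+q$ is nonnegative --- or at least that its negative part is controlled by $|A|^2$ wherever $\omega$ is small --- so that the minimum principle makes $\min_x\omega(\cdot,t)$ nondecreasing, whence $\omega\ge c_0$ for all time. Bounding $q$ against the $-2$ from $\operatorname{Ric}(\nu,\nu)$ using only the $(-1,1)$ pinching of $S$ is the step I expect to be the main obstacle, and it is exactly where the ``\af-type'' hypothesis on $S$ is essential. Once $\phi$ and $\langle\n,\nu\rangle^{-1}$ are bounded on the fixed compact $K$, the scalar equation is uniformly parabolic with $C^\infty$-bounded coefficients, and standard parabolic regularity (Krylov--Safonov for the gradient H\"older bound, then Schauder and bootstrapping) gives uniform $C^\infty$ bounds on $\phi(\cdot,t)$ for $t$ away from $0$; hence the flow exists for all time and stays smooth and graphical over $S$, proving (i) and (ii).

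Finally, convergence, claim (iii). Since $\tfrac{d}{dt}\operatorname{Area}(S(t))=-\int_{S(t)}H^2$ and the areas are nonincreasing and nonnegative, $\int_0^\infty\!\!\int_{S(t)}H^2\,dt<\infty$, so $\int_{S(t_k)}H^2\to0$ along some $t_k\to\infty$; by the uniform estimates a subsequence of the $S(t_k)$ converges smoothly to a closed (genus $g$, embedded) incompressible \ms\ in $M^3$, which by Uhlenbeck's uniqueness must be $\Sigma$. Thus $\Sigma\subset K$ and $\Sigma=\{\rho=\psi_\Sigma(x)\}$ is itself a graph over $S$ and a stationary solution of the scalar equation. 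Linearizing there, the operator is a positive multiple of the Jacobi operator $-\Delta_\Sigma-|A_\Sigma|^2+2$ of $\Sigma$; since the \pc s of $\Sigma$ are $\pm\lambda$ with $\lambda<1$ we have $|A_\Sigma|^2=2\lambda^2<2$, so this operator is strictly positive with spectral gap $\ge2\bigl(1-\max_\Sigma\lambda^2\bigr)>0$. Hence $\Sigma$ is an exponentially stable fixed point of the flow: choosing $t_k$ large enough that $S(t_k)$ lies in the $C^{2,\alpha}$-basin of attraction furnished by this gap, a standard linear-stability argument upgrades the subsequential convergence to $\|\phi(\cdot,t)-\psi_\Sigma\|_{C^\infty}\le Ce^{-\delta t}$ for some $\delta>0$ controlled by that spectral gap, i.e.\ $S(t)\to\Sigma$ exponentially.
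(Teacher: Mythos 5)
Your overall architecture matches the paper's: barrier leaves of the {\ef} for the $C^0$ bound, a gradient ({\gradf}) estimate to keep the flow graphical and the scalar equation uniformly parabolic, then area monotonicity plus Uhlenbeck's uniqueness of $\Sigma$ plus the spectral gap from $|A_\Sigma|^2<2$ for exponential convergence. The $C^0$ argument is identical to the paper's (Proposition 3.4, Theorem 4.5: the leaves at distance $\ge r_0=\tfrac12\log\tfrac{1+\lambda_0}{1-\lambda_0}$ are strictly mean-convex and act as Hopf barriers). The convergence argument is also essentially the same content, phrased differently: the paper eventually gets $|A|^2\le2-\delta$ on $S(t)$ and feeds this into $\ddl{}{t}\int_{S(t)}H^2\le-2\delta\int_{S(t)}H^2$, which is your spectral-gap statement in integrated form.

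The genuine gap is exactly where you flag it: the {\gradf} estimate. Your plan is to show the zeroth-order coefficient $|A|^2+\mathrm{Ric}(\nu,\nu)+q$ in the evolution of $\Theta=\langle\n,\nu\rangle$ is nonnegative, so $\Theta_{\min}$ is monotone. That is almost certainly too strong and is \emph{not} what the paper proves. The paper's Lemma 4.4 works from Bartnik's evolution equation
$(\partial_t-\Delta)\Theta=(|A|^2-2)\Theta+\n(H_{\n})-H\langle\nablabar_\nu\n,\nu\rangle$,
bounds the lower-order terms by $|\n(H_\n)|\le c_1(\Theta^3+\Theta^2|A|)$ and $|\langle\nablabar_\nu\n,\nu\rangle|\le c_2\Theta^2$, and only obtains a zeroth-order coefficient bounded below by $-\bigl(\tfrac{(c_1+c_2\sqrt2)^2}{4}+2+c_1\bigr)$, an absolute constant independent of $|A|$. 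This gives $\Theta_{\min}(t)\ge c_0e^{-Ct}$ --- exponential \emph{decay}, not monotonicity --- which is weaker than your claim but sufficient for positivity on any finite time interval, hence graphicality. The paper then upgrades to a uniform lower bound (Corollary 4.6) by coupling with the compact height slab of Theorem 4.5; this is the Ecker--Huisken-type coupling of height and gradient, not a sign condition on the zeroth-order coefficient, so your intended route to (ii) would need to be replaced by theirs. Separately, the paper derives the $|A|^2$ bound via an explicit maximum-principle computation on $f=|A|^2/\Theta^4$ (Theorem 5.1) rather than appealing to Krylov--Safonov/Schauder; both should work once $\Theta$ is uniformly bounded below, so that difference is cosmetic, whereas the gradient step is substantive and your proposal does not close it.
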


We note that we do not require the {\pc}s of the initial surface $S_0$ to be small. In other words, the {\mcf}
$(1.1)$ is very insensitive to the initial data. When $M^3$ is {\af}, there are abundant {\is}s of small {\pc}s, for
instance, every parallel surface from the {\ms} satisfies the {\pc} condition. This theorem is generalized to a
larger class of {\qf} manifolds (see Theorem 6.1) in \S 6.

It is very important to us that we are dealing with graphical surfaces: our estimates rely on the basic fact that
the graph functions behave quite regularly in hyperbolic spaces under evolution equations (see for example
\cite {EH89}, \cite {Unt03}). Additional benefit of having graphical surfaces: these surfaces are naturally
embedded, while it is generally very difficult to prove embeddedness from geometric measure theory.

Since the {\ms} is unique in an {\af} {\tm}, we can use {\ms}s to parametrize the space of {\af} manifolds. We
obtain topological and geometric information about $M^3$ from data of $\Sigma$. In particular, the {\cc} of a
{\qf} {\tm} is a crucial part of the manifold. It is the smallest convex subset of a {\qf} manifold that
carries its fundamental group. From the point of view of hyperbolic geometry, the {\cc} contains all the
geometrical information about the {\qf} {\tm} itself (see for instance, \cite {AC96}, \cite{Bro03}). As a direct
application, when $M^3$ is {\af}, we obtain an explicit upper bound for the {\hv} of $C(M^3)$,
in terms of the maximum {\pc} on the {\ms} $\Sigma$:

\begin{theorem}
If $M^3$ is {\af}, and let $\lambda_0 = \max\limits_{x \in \Sigma}\{|\lambda(x)|\}$, then
\begin{align}
    \vol(C(M^3))
       &\le \Acal_{\hyp}\left({\frac{\lambda_0}
            {1-\lambda_0^2}}+{\frac{1}{2}}
            \log{\frac{1+\lambda_0}
            {1-\lambda_0}}\right) \notag \\
       &= \Acal_{\hyp} \left(2\lambda_0 +
          {\frac{4}{3}}\lambda_0^3 +
          O(\lambda_0^5)\right),
\end{align}
where $\Acal_{\hyp} = 2\pi(2g-2)$ is the {\ha} of any closed {\RS} of genus $g>1$.
\end{theorem}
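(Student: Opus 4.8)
The plan is to trap the convex core between two equidistant surfaces straddling the minimal surface $\Sigma$ and then compute the hyperbolic volume of the resulting slab explicitly. By Uhlenbeck's theorem, $M^3$ is foliated by the parallel surfaces $\Sigma_r$, $r\in\R$, obtained by flowing $\Sigma$ at unit speed along its unit normal $\n$, with $\Sigma_0=\Sigma$; write $\rho$ for the induced signed distance function to $\Sigma$, so $\Sigma_r=\rho^{-1}(r)$ and $\nabla\rho=\n$. Writing the principal curvatures of $\Sigma$ at $x$ as $\pm\lambda(x)$ with $0\le\lambda(x)\le\lambda_0$ (using that $\Sigma$ is minimal, so $\lambda_0=\max_x\lambda(x)$), the standard Riccati/tube computation in curvature $-1$ gives: the shape operator $S_r$ of $\Sigma_r$ (with respect to $\n$) is diagonal in the principal frame of $\Sigma$ with eigenvalues $\tanh\!\bigl(r+\operatorname{arctanh}\lambda(x)\bigr)$ and $\tanh\!\bigl(r-\operatorname{arctanh}\lambda(x)\bigr)$, the area form of $\Sigma_r$ pulls back under the normal flow to $\bigl(\cosh^{2}r-\lambda(x)^{2}\sinh^{2}r\bigr)\,dA_{\Sigma}$, and $\nabla^{2}\rho(\n,\cdot)=0$ while $\nabla^{2}\rho|_{T\Sigma_r}=S_r$. (I would include a one-line derivation from the Jacobi equation $J''=J$.)

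Now set $r_0=\operatorname{arctanh}\lambda_0$ and let $N_{r_0}=\{|\rho|\le r_0\}$ be the closed slab bounded by $\Sigma_{-r_0}$ and $\Sigma_{r_0}$. For $r\ge r_0$ the eigenvalues $\tanh\!\bigl(r\pm\operatorname{arctanh}\lambda(x)\bigr)$ are both $\ge\tanh\!\bigl(r_0-\operatorname{arctanh}\lambda_0\bigr)=0$, so $\nabla^{2}\rho\ge0$ on $\{\rho\ge r_0\}$ and, symmetrically, $\nabla^{2}(-\rho)\ge0$ on $\{\rho\le-r_0\}$; hence $|\rho|$ is a convex function on $\{|\rho|\ge r_0\}$, and a standard argument (a geodesic in $M^3$ with endpoints in $N_{r_0}$ cannot make an interior excursion into $\{|\rho|>r_0\}$, where $|\rho|$ would be convex yet equal to $r_0$ at the two ends of the excursion) shows $N_{r_0}$ is geodesically convex. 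Since $N_{r_0}\supseteq\Sigma$ and $\Sigma$ is incompressible, $N_{r_0}$ deformation-retracts onto $\Sigma$ and carries $\pi_1(M^3)$; as $C(M^3)$ is the smallest convex subset of $M^3$ carrying $\pi_1(M^3)$, we get $C(M^3)\subseteq N_{r_0}$, hence $\vol(C(M^3))\le\vol(N_{r_0})$. (Equivalently, lift to $\H^{3}$: $\tilde N_{r_0}$ is convex and contains the properly embedded plane $\tilde\Sigma$, whose ideal boundary is a nonempty closed $\Gamma$-invariant subset of $S^{2}_{\infty}$, with $\Gamma=\pi_1(M^3)$, hence contains the limit set $\Lambda$; so $\Hull(\Lambda)\subseteq\tilde N_{r_0}$.)

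It remains to evaluate $\vol(N_{r_0})$. Using the foliation as coordinates, $\vol(N_{r_0})=\int_{\Sigma}\int_{-r_0}^{r_0}\bigl(\cosh^{2}r-\lambda(x)^{2}\sinh^{2}r\bigr)\,dr\,dA_{\Sigma}(x)$. Discarding the nonnegative $\lambda^{2}$–term and using $\int_{-r_0}^{r_0}\cosh^{2}r\,dr=r_0+\sinh r_0\cosh r_0$ together with $\tanh r_0=\lambda_0$ (so $\sinh r_0\cosh r_0=\lambda_0/(1-\lambda_0^{2})$ and $r_0=\tfrac12\log\tfrac{1+\lambda_0}{1-\lambda_0}$), we obtain $\vol(N_{r_0})\le\bigl(\tfrac{\lambda_0}{1-\lambda_0^{2}}+\tfrac12\log\tfrac{1+\lambda_0}{1-\lambda_0}\bigr)\operatorname{Area}(\Sigma)$. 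Finally, the Gauss equation gives $K_{\Sigma}=-1-\lambda^{2}$, so Gauss--Bonnet yields $\operatorname{Area}(\Sigma)=\Acal_{\hyp}-\int_{\Sigma}\lambda^{2}\,dA_{\Sigma}\le\Acal_{\hyp}$ (recall $\Acal_{\hyp}=2\pi(2g-2)$); substituting gives the stated inequality, and the Taylor expansion of $\tfrac{\lambda_0}{1-\lambda_0^{2}}+\tfrac12\log\tfrac{1+\lambda_0}{1-\lambda_0}$ about $\lambda_0=0$ is elementary. The routine ingredients are Step 1 and this final computation; the point requiring genuine care is the middle paragraph — that the equidistant surface at distance $\operatorname{arctanh}\lambda_0$ is already convex, and that the slab it cuts out actually swallows the convex core.
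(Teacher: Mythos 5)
Your proposal is correct and follows essentially the same strategy as the paper: foliate $M^3$ by equidistant surfaces from $\Sigma$, locate the critical radius $r_0=\operatorname{arctanh}\lambda_0$ where the parallel surfaces become convex, bound $C(M^3)$ by the resulting slab, and integrate the area form, finishing with Gauss--Bonnet to compare $|\Sigma|$ with $\Acal_{\hyp}$. You fill in details the paper leaves implicit (the geodesic-convexity argument via the Hessian of the signed distance) and streamline the final estimate by discarding the $\lambda^2$ term before integrating, but the underlying argument is the same.
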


We also obtain an upper bound for the {\hd} of the limit set $\Lambda_{\Gamma}$ of $M^3$, in terms of
$\lambda_0$ as well:
\begin{theorem}
If $M^3$ is {\af}, and let $\lambda_0 = \max\limits_{x \in \Sigma}\{|\lambda(x)|\}$, then the {\hd}
$D(\Lambda_{\Gamma})$ of the limit set $\Lambda_{\Gamma}$ for $M^3 = \H^3/\Gamma$ satisfies
\begin{equation}
D(\Lambda_{\Gamma}) < 1+ \lambda_0^2.
\end{equation}
\end{theorem}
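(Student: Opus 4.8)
The plan is to combine Theorem 1.2 with Bishop--Jones-type estimates relating the Hausdorff dimension of the limit set to the geometry of the convex core, and more directly with the classical inequality bounding the dimension of the limit set of a quasi-Fuchsian group in terms of the curvature of an embedded incompressible surface. The cleanest route goes through the minimal surface $\Sigma$ itself rather than through the convex-core volume. Recall that the minimal surface $\Sigma$ in an almost Fuchsian $M^3$ has principal curvatures $\pm\lambda(x)$ with $\lambda_0 = \max_{x\in\Sigma}|\lambda(x)| < 1$. The key point is that the induced metric $g$ on $\Sigma$ and the hyperbolic metric on $M^3$ are uniformly comparable, and more precisely that the conformal structure at infinity of $M^3$ is quasiconformally close to the conformal structure of $(\Sigma, g)$ with a dilatation controlled by $\lambda_0$.

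First I would set up the equidistant foliation from $\Sigma$: writing $g_r$ for the induced metric on the parallel surface at signed distance $r$, one has the exact shape-operator evolution giving $g_r = g\big((\cosh r)\,\mathrm{id} + (\sinh r)\,S\big)^2$ where $S$ is the shape operator of $\Sigma$ with eigenvalues $\pm\lambda(x)$. As $r \to \pm\infty$ the rescaled metrics $e^{-2|r|}g_r$ converge to a metric $g_\infty^\pm$ on $\Sigma$ conformal to the conformal structure at infinity on the corresponding component $\Omega^\pm$ of the domain of discontinuity. An explicit computation with the eigenvalues shows $g_\infty^+$ is conformal to $g\cdot(1+\lambda(x))^2$ in one principal direction and $g\cdot(1+\lambda(x))^{\text{(other)}}$... more carefully, one gets that the identity map from $(\Sigma,g)$ to $(\Sigma, g_\infty^\pm)$ is quasiconformal with pointwise dilatation $\big|\tfrac{1\pm\lambda}{1\mp\lambda}\big|$, hence with maximal dilatation $K \le \tfrac{1+\lambda_0}{1-\lambda_0}$.

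Next I would invoke the relation between the Hausdorff dimension of a quasicircle and the dilatation of a quasiconformal map straightening it, together with the fact that the limit set $\Lambda_\Gamma$ is the quasicircle separating $\Omega^+$ from $\Omega^-$. Composing: $(\Sigma, g)$ is Fuchsian-type (its universal cover maps conformally to a round disk is \emph{not} quite right, since $(\Sigma,g)$ need not be hyperbolic, but its uniformization is), and the two boundary conformal structures are reached by $K$-quasiconformal deformations of it. Standard dimension estimates for limit sets of quasi-Fuchsian groups — in the form $D(\Lambda_\Gamma) \le 1 + \|\mu\|_\infty^2 + o(\|\mu\|_\infty^2)$ where $\mu$ is the Beltrami coefficient of the straightening map — then give, after plugging $\|\mu\|_\infty \le \tfrac{K-1}{K+1} = \lambda_0$ from the dilatation bound above, the estimate $D(\Lambda_\Gamma) < 1 + \lambda_0^2$.

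The main obstacle I anticipate is making the last step a genuine strict inequality with the clean constant $1+\lambda_0^2$ rather than $1 + C\lambda_0^2$ for some inexplicit $C$, or $1 + \lambda_0^2 + O(\lambda_0^4)$. This requires either a sharp form of the quasicircle dimension estimate or, more likely, a direct argument: one can foliate the convex core by the equidistant surfaces, bound the exponent of convergence (critical exponent, equal to $D(\Lambda_\Gamma)$) by integrating the area growth of these surfaces — using again $g_r = g\big((\cosh r)\mathrm{id} + (\sinh r)S\big)^2$ to compute $\operatorname{vol}$-growth exactly — and extract the eigenvalue of fastest volume growth, which is governed by $e^{(1+\lambda_0)r}$-type terms in one direction. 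A Sullivan/Bishop--Jones comparison of the critical exponent with this volume-entropy then yields the bound; keeping track of cross-terms carefully is what produces exactly $\lambda_0^2$ rather than a larger power. I would expect the cleanest writeup to bypass quasiconformal theory entirely and run this volume-growth/critical-exponent estimate directly on the equidistant foliation, since all the needed curvature data is already encoded in $S$ and $\lambda_0$.
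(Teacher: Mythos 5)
Your first two paragraphs are on the right track and land on essentially the same dilatation bound the paper establishes: the limit set is a $k$-quasicircle with $k < \lambda_0$, coming from $K < \frac{1+\lambda_0}{1-\lambda_0}$. The paper reaches this by constructing a Fuchsian model $N = \Sigma \times \R$ with a warped-product metric, observing (via Uhlenbeck) that the natural identification $\varphi : N \to M^3$ is a quasi-isometry, lifting it to $\tilde\varphi : \H^3 \to \H^3$, extending it to a quasiconformal automorphism of $\widehat{\C}$ by the standard boundary-extension theorems, and then inserting Epstein's hyperbolic Gauss maps $G'_+ : \Pi \to S^2_+$ and $G''_+ : \widehat{\Sigma} \to \Omega_+(\Gamma)$ into a commutative diagram; Epstein's Proposition 5.1 and Corollary 5.3 supply the dilatation bound $K < \frac{1+\lambda_0}{1-\lambda_0}$. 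Your rescaled-equidistant-metric picture is a heuristic for the same mechanism, but as written it does not actually produce a quasiconformal boundary map; you would still need the quasi-isometry extension machinery to make it rigorous.

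The genuine gap is in the last step, and to your credit you flagged it yourself as the obstacle. You invoke an asymptotic estimate of the form $D \le 1 + \|\mu\|_\infty^2 + o(\|\mu\|_\infty^2)$, which yields $1 + \lambda_0^2$ only up to an uncontrolled lower-order error and does not give the clean strict inequality. The missing ingredient is Smirnov's 2009 theorem proving Astala's conjecture: a $k$-quasicircle has Hausdorff dimension at most $1 + k^2$, exactly, for every $k \in [0,1)$, with no error term. Combined with the bound $k < \lambda_0$ this immediately gives $D(\Lambda_\Gamma) \le 1 + k^2 < 1 + \lambda_0^2$. Your proposed volume-growth/critical-exponent workaround is not developed and is not what the paper does; once Smirnov's theorem is in hand it is also unnecessary.
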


For $\lambda_0$ close to $0$, Theorems 1.2 and 1.3 measure how close $M$ is to being Fuchsian.
In \cite{Bro03}, Brock showed the {\hv} of the {\cc} is quasi-isometric to the {\WP} distance between
conformal boundaries of $M^3$ in {\TS}. We showed (\cite{GHW10}) that the area functional of the {\ms}
is a potential for the {\WP} metric in {\TS}.

The differential equations of the evolution of hypersurfaces by their {\mc} have been studied extensively in
various ambient Riemannian manifolds (see for instance \cite{Bra78}, \cite{Hui84}, \cite{Hui86},
\cite{Hui87a}, and many others). Our study is of two-folds: the setting of {\qf} hyperbolic three-manifolds
provides effective barrier surfaces (Theorem 4.5) for the flow, and the convergence of the flow leads to the
discovery of incompressible {\ms}s in the three-manifolds.

\subsection*{Plan of the paper}
We provide necessary background material in \S 2, especially the {\af} manifolds, the {\ef}, and the {\mcf}.
Section \S 3 is focused on the geometry of {\af} manifolds, where we prove the Theorem 1.2 (volume estimate)
and Theorem 1.3 ({\hd} estimate). Special geometry of these three-manifolds plays important roles in our
analysis of the {\mcf}. We prove the Theorem 1.1 in next two sections: \S 4 (uniform bound for the
{\sff} and {\mc}), \S 5 (long time existence, convergence and uniqueness of the limit).  We apply our technique to
more general cases in section \S 6, where we also show the mean convexity is preserved under the flow.

\subsection*{Acknowledgements}
The authors are grateful to Ren Guo and Zhou Zhang for many helpful discussions. We also thank Dick Canary 
and Jun Hu for their suggestions regarding the {\hd} of the limit set. The research of the first named author is 
partially supported by a PSC-CUNY grant.


\section{Preliminaries}
In this section, we fix our notations, and introduce some preliminary facts that will be used in this paper.


\subsection{Quasi-Fuchsian manifolds}
For detailed reference on {\kg}s and low dimensional topology, one can go to \cite{Mar74} and \cite{Thu82}.

The universal cover of a hyperbolic {\tm} is $\H^3$, and the deck transformations induce a representation of the
fundamental group of the manifold in $Isom(\H^3)= PSL(2,\C)$, the (orientation preserving) isometry group of
$\H^3$. A subgroup $\Gamma \subset PSL(2,\C)$ is called a {\em Kleinian group} if $\Gamma$ acts on $\H^{3}$
properly discontinuously. For any {\kg} $\Gamma$, $\forall\,p\in\H^{3}$, the orbit set
\begin{eqnarray*}
   \Gamma(p)=\{\gamma(p)\ |\ \gamma\in \Gamma\}
\end{eqnarray*}
has accumulation points on the boundary $S^{2}_\infty=\partial\H^{3}$, and these points are the {\em limit points}
of $\Gamma$, and the closed set of all these points is called the {\em limit set} of $\Gamma$, denoted by
$\Lambda_{\Gamma}$. In the case when $\Lambda_{\Gamma}$ is contained in a circle $S^1 \subset S^2$, the
quotient $M^3 =\H^{3}/\Gamma$ is called {\it Fuchsian}, and $M^3$ is isometric to a product space $S \times \R$. If
the limit set $\Lambda_{\Gamma}$ lies in a Jordan curve, $M^3 =\H^{3}/\Gamma$ is called {\bf {\qf}}, and it is
topologically $S \times \R$, where $S$ is a closed surface of genus $g$ at least two. It is clear that a {\qf} manifold is
quasi-isometric to a Fuchsian manifold. The space of such three manifolds, the {\qf} space of genus $g$ surfaces,
is a complex manifold of dimension of $6g-6$, which has very complicated structures.

Finding {\ms}s in negatively curved manifolds is a problem of fundamental importance. The basic results are
due to Schoen-Yau (\cite {SY79}) and Sacks-Uhlenbeck (\cite{SU82}), and their results can be applied to the
case of {\qf} {\tm}s: any {\qf} manifold contains at least one incompressible {\ms}. In the case of {\af}, the {\ms} is
unique (\cite{Uhl83}). On the other hand, there are many {\qf} manifolds that admit many {\ms}s (\cite{Wan10}).

An essential problem in hyperbolic geometry and complex dynamics is to study the {\hd}
$D(\Lambda_{\Gamma})$ of the limit set $\Lambda_{\Gamma}$ associated to $M^3$. This problem also intimately
related to understanding the lower spectrum theory of hyperbolic 3-manifolds (\cite{Sul87, BC94}). In the case of
Fuchsian manifolds, $\Lambda_{\Gamma}$ is a round circle and $D(\Lambda_{\Gamma}) = 1$. When $M^3$ is
{\qf} but not Fuchsian, as throughout this paper, we have $1 < D(\Lambda_{\Gamma}) < 2$. There is a rich
theory of {\qcm} and its distortion in {\hd}, area and other quantities (see for instance \cite {GV73, LV73}).

The following lemma is the well-known Hopf's maximum principle for tangential hypersurfaces in Riemannian
geometry.
\begin{lem} [\cite{Hop89}]
Let $S_{1}$ and $S_{2}$ be two hypersurfaces in a Riemannian manifold which intersect at a common point
tangentially. If $S_{2}$ lies in positive side of $S_{1}$ around the common point, then $H_{1} \le H_{2}$, where
$H_{i}$ is the {\mc} of $S_{i}$ at the common point for $i=1,2$.
\end{lem}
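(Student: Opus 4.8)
The plan is to reduce everything to the second derivative test at an interior minimum, once both hypersurfaces have been written as graphs over their common tangent plane at the touching point $p$. Let $\nu$ be the unit normal to $S_1$ at $p$ pointing toward the positive side, and recall $T_pS_2=T_pS_1$. Choose Fermi coordinates $(x,x_{n+1})$ on a neighborhood of $p$ so that $T_pS_1$ corresponds to $\{x_{n+1}=0\}$ and $\nu$ to the $x_{n+1}$-direction. Near $p$ each $S_i$ is then a graph $\{x_{n+1}=u_i(x)\}$ of a $C^2$ function $u_i$ on a small ball, and tangency at $p$ forces $u_1(0)=u_2(0)=0$ and $Du_1(0)=Du_2(0)=0$. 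The hypothesis that $S_2$ lies on the positive side of $S_1$ around $p$ says exactly that $u_2(x)\ge u_1(x)$ on a (possibly smaller) ball; hence $w:=u_2-u_1$ satisfies $w\ge 0$, $w(0)=0$, $Dw(0)=0$, so $0$ is an interior minimum of $w$ and $D^2w(0)\succeq 0$.

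Next I would invoke the structure of the mean curvature operator: in these coordinates the mean curvature of a graph $\{x_{n+1}=u\}$, computed with the upward normal, is a quasilinear second-order operator
\[
   H=\mathcal{H}[u]=a^{ij}(x,u,Du)\,D_{ij}u+b(x,u,Du),
\]
where $(a^{ij})$, built from the inverse induced metric, is positive definite — this is the ellipticity of the mean curvature operator — and the non-principal term $b$, which collects the ambient metric components and Christoffel symbols, depends only on $(x,u,Du)$ and not on second derivatives of $u$. Evaluating at $x=0$: since $u_1,u_2$ agree there to first order, $a^{ij}(0,u_i(0),Du_i(0))$ and $b(0,u_i(0),Du_i(0))$ take the same values for $i=1,2$, so subtracting the two expressions for $H_i$ at $p$ gives
\[
   H_2-H_1=\mathcal{H}[u_2](0)-\mathcal{H}[u_1](0)=a^{ij}(0)\,D_{ij}w(0).
\]
Since $D^2w(0)\succeq 0$ and $(a^{ij}(0))\succ 0$, the right-hand side is $\ge 0$, which is the claim $H_1\le H_2$ at $p$.

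I expect no essential obstacle here: this is a purely pointwise argument. One can alternatively write $H_2-H_1=Lw$ with $L=\int_0^1 D\mathcal{H}[u_1+tw]\,dt$ a linear elliptic operator whose zeroth-order term is inactive at $p$ since $w(0)=0$, and then quote the classical maximum principle at the interior minimum $p$; this viewpoint also yields the strong form — $H_1\equiv H_2$ forces $S_1=S_2$ near $p$ — which is not needed here. The only points demanding care are the orientation bookkeeping, namely choosing $\nu$ so that ``$S_2$ on the positive side of $S_1$'' really becomes $u_2\ge u_1$ for the sign convention under which $H_i=\mathcal{H}[u_i]$, and verifying the structural fact that the non-principal part of the mean curvature operator in a general Riemannian ambient involves no second derivatives of $u$, so that it cancels in the difference at the touching point.
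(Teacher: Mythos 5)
The paper does not actually give a proof of Lemma~2.1; it is quoted as a classical fact with the citation to Hopf, so there is no in-text argument to compare against. Your proposal is a correct, self-contained proof and is the standard one: put both hypersurfaces as graphs over the common tangent plane at the touching point $p$, observe that the hypothesis gives $w=u_2-u_1\ge 0$ with $w(0)=0$, $Dw(0)=0$, hence $D^2w(0)\succeq 0$ by the second derivative test, and then use that the graph mean curvature is a quasilinear second-order operator
\[
\mathcal{H}[u]=a^{ij}(x,u,Du)\,D_{ij}u+b(x,u,Du)
\]
with $(a^{ij})$ positive definite and $b$ carrying no second derivatives. Since $u_1$ and $u_2$ agree to first order at $0$, both the principal and lower-order coefficients cancel in the difference, leaving $H_2-H_1=a^{ij}(0)\,D_{ij}w(0)\ge 0$. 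This is exactly the comparison principle for quasilinear elliptic equations specialized to the pointwise touching situation, which is what ``Hopf's maximum principle for tangential hypersurfaces'' refers to. Your two flagged points of care are real but handled correctly: the orientation convention (choosing $\nu$ toward the ``positive side'' so that $S_2$ on the positive side becomes $u_2\ge u_1$ and both $H_i$ are computed with the corresponding upward normal) is consistent with how the lemma is invoked in the proof of Theorem~4.5; and the structural claim that $b$ contains no second derivatives of $u$ is true, since $b$ collects only ambient metric components and Christoffel symbols evaluated along the graph, which depend on $(x,u,Du)$ alone. The alternative phrasing via $L=\int_0^1 D\mathcal{H}[u_1+tw]\,dt$ is also correct and gives the strong maximum principle version as a bonus, though, as you note, it is not needed for the inequality stated.
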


\subsection{Almost Fuchsian manifolds}
Throughout the paper, $M^3$ is an {\af} {\tm}: the {\pc}s of the {\ms} $\Sigma$ are in the range of $(-1,1)$.

The induced metric on an {\is} $S$ is $g_{ij}(x)=e^{2v(x)}\delta_{ij}$, where $v(x)$ is a smooth function on $S$,
and while the {\sff} $A(x)=[h_{ij}]_{2\times{}2}$, here $h_{ij}$ is given by, for $1\leq{}i,j\leq{}2$,
\begin{equation*}
   h_{ij}=\langle{\nablabar_{e_{i}}\nu}, {e_{j}}\rangle
         =-\langle{\nablabar_{\nu}e_{i}},{e_{j}}\rangle\ ,
\end{equation*}
where $\{e_{1},e_{2}\}$ is a basis on $S$, and $\nu$ is the unit normal field on $S$, and $\nablabar$ is the
Levi-Civita connection of $(M^3,\gbar_{\alpha\beta})$.

We add a bar on top for each quantity or operator with respect to $(M^3,\gbar_{\alpha\beta})$.

Let $\lambda_1(x)$ and $\lambda_2(x)$ be the eigenvalues of $A(x)$. They are the {\pc}s of $S$, and we denote
$H(x) =\lambda_1(x)+ \lambda_2(x)$ as the {\mc} function of $S$.

Let $S(r)$ be the family of equidistant surfaces with respect to $S$, i.e.
\begin{equation*}
   S(r)=\{\exp_{x}(r\nu)\ |\ x\in{}S\}\ ,
   \quad{}r\in(-\varepsilon,\varepsilon)\ .
\end{equation*}
The induced metric on $S(r)$ is denoted by $g(x,r) = g_{ij}(x,r)$, and the {\sff} is denoted by
$A(x,r)=[h_{ij}(x,r)]_{1\leq{}i,j\leq{}2}$. The {\mc} on $S(r)$ is thus given by $H(x,r)=g^{ij}(x,r)h_{ij}(x,r)$.
\begin{lem}[\cite {Uhl83}, \cite{HW09}]
The induced metric $g(x,r)$ on $S(r)$ has the form
\begin{equation}\label{eq:metric of S(r)}
   g(x,r)=e^{2v(x)}[\cosh(r)\I+\sinh(r)
   e^{-2v(x)}A(x)]^{2}\ ,
\end{equation}
where $r \in (-\varepsilon,\varepsilon)$.

The {\pc}s of $S(r)$ are given by
\begin{equation}\label{pc4ef}
   \mu_{j}(x,r)=\frac{\tanh(r)+\lambda_{j}(x)}{1+\lambda_{j}(x)\tanh(r)}\ ,\qquad j=1,2\ .
\end{equation}
and mean curvature is
\begin{equation} \label{mc4ef}
   H(x,r)=\frac{2(1+\lambda_{1}\lambda_{2})\tanh(r)+(\lambda_{1}+\lambda_{2})(1+\tanh^{2}(r))}
   {1+(\lambda_{1}+\lambda_{2})\tanh(r)+\lambda_{1}\lambda_{2}\tanh^{2}(r)}\ .
\end{equation}
\end{lem}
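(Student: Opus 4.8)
The plan is to work entirely in the normal exponential coordinates adapted to $S$ and exploit the fact that $\H^3$ has constant curvature $-1$, so the Jacobi equation along each unit-speed geodesic $\gamma_x(r)=\exp_x(r\nu)$ is the elementary ODE $\ddz{J}{r}=J$. First I would set up the variation of the surface: for a tangent vector $e_i$ at $x\in S$, the vector field $J_i(r)$ obtained by pushing $e_i$ along the normal geodesic is a Jacobi field with $J_i(0)=e_i$ and $J_i'(0)=-\nablabar_\nu e_i$, and by the Weingarten relation in the excerpt this initial derivative equals the shape operator applied to $e_i$. Hence $J_i(r)=\cosh(r)\,e_i + \sinh(r)\,(S_{A}e_i)$, where $S_A = g^{-1}A$ is the shape operator of $S$; writing this in the orthonormal-up-to-conformal-factor frame where $g_{ij}(x)=e^{2v(x)}\delta_{ij}$ gives exactly the bracketed matrix $\cosh(r)\I+\sinh(r)e^{-2v(x)}A(x)$, and the induced metric on $S(r)$ is the Gram matrix $g(x,r)=\langle J_i(r),J_j(r)\rangle$. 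The only thing to check is that this Gram matrix is the \emph{square} of that bracket with respect to $g_{ij}(x)$, which follows because $S_A$ is $g$-self-adjoint, so $e^{2v}$ times the bracket, its transpose, and the bracket again collapse to $e^{2v}[\,\cdots\,]^2$; this establishes \eqref{eq:metric of S(r)}.

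Next, for the principal curvatures \eqref{pc4ef} I would compute the shape operator $S_{A}(r)$ of $S(r)$, which is $-\nablabar_\nu$ acting on the pushed-forward tangent frame, i.e. $S_A(r)J_i(r) = -J_i'(r) = -\big(\sinh(r)e_i+\cosh(r)S_Ae_i\big)$ up to sign conventions. Diagonalizing $S_A$ at $x$ with eigenvalues $\lambda_j(x)$ and eigenvectors $v_j$, the Jacobi field in the $v_j$-direction stays in that direction, $J(r)=(\cosh r+\lambda_j\sinh r)v_j$, and differentiating once more gives $J'(r)=(\sinh r+\lambda_j\cosh r)v_j$; the ratio $J'(r)/J(r)$ is precisely $\mu_j(x,r)=\dfrac{\sinh r+\lambda_j\cosh r}{\cosh r+\lambda_j\sinh r}=\dfrac{\tanh r+\lambda_j}{1+\lambda_j\tanh r}$, which is \eqref{pc4ef}. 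Finally, the mean curvature formula \eqref{mc4ef} is just $H(x,r)=\mu_1(x,r)+\mu_2(x,r)$: placing the two fractions over the common denominator $(1+\lambda_1\tanh r)(1+\lambda_2\tanh r) = 1+(\lambda_1+\lambda_2)\tanh r + \lambda_1\lambda_2\tanh^2 r$ and expanding the numerator $(\tanh r+\lambda_1)(1+\lambda_2\tanh r)+(\tanh r+\lambda_2)(1+\lambda_1\tanh r) = 2(1+\lambda_1\lambda_2)\tanh r + (\lambda_1+\lambda_2)(1+\tanh^2 r)$ gives exactly the stated expression. (Alternatively one can cite \cite{Uhl83} or \cite{HW09} for the same computation.)

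The main obstacle, such as it is, is purely bookkeeping: getting the sign conventions consistent between the definition $h_{ij}=\langle\nablabar_{e_i}\nu,e_j\rangle=-\langle\nablabar_\nu e_i,e_j\rangle$ used in the excerpt, the direction of the normal $\nu$ along which one exponentiates, and the resulting sign of the shape operator of $S(r)$; an error there flips $\tanh r\mapsto-\tanh r$ in \eqref{pc4ef}. Once the convention is pinned down by evaluating at $r=0$ (where $\mu_j(x,0)=\lambda_j(x)$ and $g(x,0)=e^{2v}\I^2$ must hold), every step is a one-line constant-curvature Jacobi-field computation and an elementary algebraic simplification, so no genuine analytic difficulty arises.
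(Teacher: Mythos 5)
Your proposal is correct, and since the paper does not prove this lemma but cites \cite{Uhl83} and \cite{HW09}, there is no in-text argument to compare against; the Jacobi-field computation you sketch is precisely the standard derivation used in those references. The induced metric is the Gram matrix of the transported Jacobi fields $J_i(r)=\cosh(r)\,e_i+\sinh(r)\,S_A e_i$, the shape operator of $S(r)$ is read off from $J_i'(r)$, and summing the two eigenvalue formulas gives the stated $H(x,r)$.

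One small internal slip worth flagging: you write $S_A(r)J_i(r)=-J_i'(r)$ but then form $\mu_j=J'(r)/J(r)$ \emph{without} the minus. These two lines are inconsistent as written; the second one is the correct one for the paper's orientation (the shape operator of $S(r)$ with outward normal $\gamma'(r)$ is $J_i\mapsto\nablabar_{J_i}\gamma'=J_i'$, since $[J_i,\gamma']=0$). You do say the ambiguity is resolved by requiring $\mu_j(x,0)=\lambda_j(x)$, which indeed fixes the sign, and your final formulas \eqref{eq:metric of S(r)}--\eqref{mc4ef} are all correct, so this is a blemish in exposition rather than a gap in the argument.
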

Clearly, when $|\lambda_{j}(x)| < 1$ for $j=1,2$, the metrics $g(x,r)$ are of no singularity for all $r \in \R$ and
therefore $\{S(r)\}_{r\in\R}$ forms a foliation of surfaces parallel to $S$, called the {\it {\ef}} or the {\it {\nf}}. In
what follows, we have $|\lambda_{j}(S)| < 1$ for $j=1,2$.

Since $M^3$ is {\af}, then we have $|\lambda_{j}(x)| < 1$ for $j=1,2$ and $x \in \Sigma$. The {\ef} from the
{\ms} $\Sigma$ is then denoted by $\{\Sigma(r)\}_{r\in \R}$. Each fiber $\Sigma(r)$ satisfies the {\pc}s lie in
$(-1,1)$. We may assume the reference surface $S = \Sigma$ to simplify the exposition in the first part of the
proof of Theorem 4.5.

The existence of {\ef} is a remarkable property of {\af} manifolds. Recently, very interesting applications of
{\af} manifolds have been explored in the content of mathematical physics, see for example, \cite {KS07},
\cite{KS08} and others.

\subsection{Mean curvature flow}

Let $F_{0}: S \to{}M^3 = S \times \R$ be the immersion of an {\is} $S$ in $M^3$. Without loss of generality, we
assume that $S_{0}=F_{0}(S)$ is contained in the positive side of the {\ms} $\Sigma$, and is a graph over
$\Sigma$ with respect to $\n$, i.e., $\langle{\n},{\nu_0}\rangle \geq c_0>0$, here $\n$ is the unit normal
vector on $\Sigma$, $\nu_0$ is the unit normal vector on $S_{0}$ and $c_0$ is a constant depending only
on $S_{0}$.

We consider a family of immersions of surfaces in $M^3$,
\begin{equation*}
   F:S\times[0,T)\to{}M^3\ , \quad{}0\leq{}T\leq\infty
\end{equation*}
with $F(\cdot,0)=F_{0}$. For each $t\in[0,T)$, $S(t)=\{F(x,t)\in{}M^3\ |\ x\in{}S\}$ is the evolving surface
at time $t$, and $H(x,t)$ its {\mc}.

The {\mcf} equation is given by, as in $(1.1)$:
\begin{equation*}
   \left\{
   \begin{aligned}
      \ppl{}{t}\,F(x,t)&=-H(x,t)\nu(x,t)\ ,\\
      F(\cdot,0)&=F_{0}\ ,
   \end{aligned}
   \right.
\end{equation*}
Here $\nu(x,t)$ is the normal vector on $S(t)$ with $-\nu$ points to the reference surface $S$.

This is a parabolic equation, and Huisken proved the short-time existence of the solutions to
$(1.1)$, and initial compact surface quickly develops singularities along the flow, moreover, he
showed the blow-up of the norm of the {\sff}s if the singularity occurs in finite time.
\begin{theorem}[\cite {Hui84}]
If the initial surface $S_0$ is smooth, then the equation $(1.1)$ has a smooth solution on some
maximal open time interval $0\leq{}t<T$, where $0<T \leq \infty$. If $T<\infty$, then
$|A|_{\max}(t)\equiv\max\limits_{x\in{}S}|A|(x,t)\to\infty$\, as $t\to{}T$.
\end{theorem}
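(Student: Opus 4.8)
The plan is to treat $(1.1)$ as a quasilinear parabolic system, obtain short-time existence and smoothness from standard parabolic theory, and then derive the blow-up alternative by a bootstrap on the derivatives of the {\sff}.

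\emph{Short-time existence.} The first obstacle is that $(1.1)$, read as an equation for the immersion $F$, is only weakly parabolic: the linearization of the map $F\mapsto -H\nu$ has a principal symbol that annihilates every tangential variation, so the system is degenerate along reparametrizations of $S$. I would break this degeneracy in one of the two usual ways — a DeTurck-type modification that subtracts off a tangential term to make the system strictly parabolic, or, more concretely in the present setting, a graphical gauge. Since $S_0$ satisfies $\langle \n,\nu_0\rangle \ge c_0>0$, for small $t$ the surface $S(t)$ can be written as a normal graph $\{\exp_x(u(x,t)\n) : x\in S\}$, and using the equidistant coordinates of Lemma 2.2 the flow reduces to a scalar equation $\partial_t u = Q(x,u,\nablabar u,\nablabar^2 u)$ with $Q$ smooth and uniformly elliptic in $\nablabar^2 u$ for $u$ near $0$. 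A contraction-mapping (or inverse-function-theorem) argument in the parabolic H\"older spaces $C^{2+\alpha,1+\alpha/2}$ then gives a unique short-time solution, and parabolic Schauder estimates, together with smoothness of the initial data, upgrade it to a smooth solution on a maximal interval $[0,T)$, $T\in(0,\infty]$.

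\emph{Blow-up criterion.} Suppose, for contradiction, $T<\infty$ and $\sup_{S\times[0,T)}|A|\le\Lambda<\infty$. The goal is to bootstrap this single bound into uniform bounds on all covariant derivatives $\nablabar^m A$. One records the evolution equations, schematically
\begin{equation*}
   \partial_t A = \Delta A + Q(A), \qquad \partial_t|\nablabar^m A|^2 = \Delta |\nablabar^m A|^2 - 2|\nablabar^{m+1}A|^2 + R_m,
\end{equation*}
where $Q(A)$ and $R_m$ are polynomial in $A,\nablabar A,\dots,\nablabar^m A$ with coefficients controlled by the (bounded) ambient geometry of the {\qf} manifold $M^3$. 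An induction on $m$, using the interpolation inequalities and the parabolic {\maxp} exactly as in \cite{Hui84}, yields constants $C_m=C_m(\Lambda,m,T,M^3)$ with $|\nablabar^m A|\le C_m$ on $S\times[0,T)$. With all derivatives of $A$ bounded, the induced metrics $g(\cdot,t)$ stay uniformly equivalent and the immersions $F(\cdot,t)$ converge in $C^\infty$ as $t\to T$ to a smooth immersion $F(\cdot,T)$; applying the short-time existence step at $F(\cdot,T)$ extends the flow beyond $T$, contradicting maximality. Hence $|A|_{\max}(t)\to\infty$ as $t\to T$.

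The technical heart — and the only place where real work is needed — is the a priori $C^\infty$ control of $A$ from the bound on $|A|$ alone; this is Huisken's contribution in \cite{Hui84}, and it relies on the precise algebraic form of the evolution equations for $\nablabar^m A$ and on the uniform boundedness of the curvature of the ambient {\qf} three-manifold. In the rest of the paper the special geometry of {\af} manifolds will instead be used to bound $|A|$ along the flow outright (via barrier surfaces, Theorem 4.5), so that in our situation the alternative $|A|_{\max}\to\infty$ never occurs and $T=\infty$.
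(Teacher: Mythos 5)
The paper quotes this result as a black-box citation to Huisken (\cite{Hui84}) and gives no proof of its own, so there is nothing internal to compare your attempt against. Your outline is a correct reconstruction of the standard argument: break the reparametrization degeneracy by a DeTurck modification or a graphical gauge to get a uniformly parabolic scalar equation with short-time existence and smoothness from Schauder theory, then establish the blow-up alternative by bootstrapping a uniform $|A|$ bound into $C^\infty$ bounds via the evolution equations
\begin{equation*}
   \left(\ppl{}{t}-\Delta\right)|\nabla^{m}A|^{2} = -2|\nabla^{m+1}A|^{2}+R_{m}
\end{equation*}
and interpolation, which is exactly Huisken's scheme and matches the form used in Proposition 5.6 of this paper. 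Two small points: you write $\nablabar^{m}A$ for what should be the intrinsic covariant derivatives $\nabla^{m}A$ on the evolving surfaces; and the ellipticity in your graphical reduction is governed by a lower bound on the gradient function $\Theta$, not by $u$ being near $0$ --- since $S$ has principal curvatures in $(-1,1)$ the equidistant chart is nonsingular for all $r\in\R$, so the graphical gauge is usable for any $u$ as long as $\Theta$ stays bounded below. Finally, Theorem 2.3 as stated covers arbitrary smooth compact initial data, for which the DeTurck variant (or local-graph patching) is the clean route; but since every initial surface in this paper is assumed graphical over $S$, your graphical-gauge derivation is fully adequate for the purposes of the paper.
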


\section{Geometry of Almost Fuchsian Manifolds}
In this section, we wish to obtain information about the {\af} manifold $M^3$ via its unique {\ms} $\Sigma$. We derive
several geometrical properties on the {\ef} $\{\Sigma(r)\}_{r \in \R}$ in \S 3.1, and in \S 3.2, we obtain explicit upper
bounds for the {\hv} of the {\cc} of $M^3$. Proposition 3.4 is particularly useful both in the proof of Theorem 4.5 and
Theorem 1.2.
\subsection{Some estimates on $\{\Sigma(r)\}_{r \in \R}$}

We record {\pc}s of the {\ms} $\Sigma$ by $\pm \lambda$ and $-1 < \lambda < 1$, and $|S|$ be the area for any
{\is} $S$ (with respect to the induced metric), and $\Acal_{\hyp} = 2\pi(2g-2)$ the {\ha} of the surface $S$.

We start with a well-known estimate which implies the area of the {\ms} under the induced metric from ambient space is
comparable to that of the hyperbolic area, with universal constants. We only include a proof for the sake of completeness.
\begin{pro}
$\Acal_{\hyp}/2 < |\Sigma| < \Acal_{\hyp}$.
\end{pro}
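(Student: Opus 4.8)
The plan is to use the Gauss equation to express the intrinsic curvature of $\Sigma$ in terms of its principal curvatures, then integrate via Gauss–Bonnet to compare the two areas. Since $\Sigma$ is minimal, its principal curvatures are $\pm\lambda$ for a function $\lambda$ with $0 \le \lambda < 1$ on $\Sigma$ (taking $\lambda \ge 0$ without loss of generality by a choice of labeling). In the hyperbolic ambient manifold $M^3$, the sectional curvature along the tangent plane to $\Sigma$ is $-1$, so the Gauss equation gives the intrinsic Gaussian curvature of $\Sigma$ as $K_\Sigma = -1 + \lambda_1\lambda_2 = -1 - \lambda^2$.

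First I would integrate this identity over $\Sigma$ against the \emph{induced} area form. By Gauss–Bonnet, $\int_\Sigma K_\Sigma \, dA = 2\pi\chi(\Sigma) = -2\pi(2g-2) = -\Acal_{\hyp}$. Hence
\begin{equation*}
   \Acal_{\hyp} = \int_\Sigma (1+\lambda^2)\, dA = |\Sigma| + \int_\Sigma \lambda^2\, dA.
\end{equation*}
Now the two bounds follow immediately from pointwise estimates on $\lambda^2$. Since $0 \le \lambda^2$ (and $\lambda$ is not identically zero because $M^3$ is not Fuchsian), we get $\int_\Sigma \lambda^2\, dA > 0$, so $|\Sigma| < \Acal_{\hyp}$. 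For the lower bound, since $\lambda^2 < 1$ pointwise, $\int_\Sigma \lambda^2\, dA < |\Sigma|$, and therefore $\Acal_{\hyp} = |\Sigma| + \int_\Sigma \lambda^2\, dA < 2|\Sigma|$, i.e. $|\Sigma| > \Acal_{\hyp}/2$.

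There is essentially no serious obstacle here; the only point requiring a little care is justifying the Gauss-equation computation of $K_\Sigma$, namely that the correction term to the ambient sectional curvature is $\det A = \lambda_1\lambda_2 = -\lambda^2$ rather than the mean-curvature-type combination. This is the standard Gauss equation for a surface in a $3$-manifold, and the minimality of $\Sigma$ is what forces $\lambda_1\lambda_2 = -\lambda^2 \le 0$, which is exactly the sign needed to make \emph{both} inequalities strict. I would also remark that the strictness of the upper bound uses the non-Fuchsian hypothesis (otherwise $\lambda \equiv 0$ and $|\Sigma| = \Acal_{\hyp}$), while the lower bound's strictness uses only the open condition $|\lambda_j| < 1$ defining almost Fuchsian manifolds.
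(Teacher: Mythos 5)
Your proof is correct and follows exactly the same route as the paper: apply the Gauss equation to get $K_\Sigma = -1 - \lambda^2$, integrate via Gauss--Bonnet to obtain $\Acal_{\hyp} = |\Sigma| + \int_\Sigma \lambda^2\,dA$, and then use the pointwise bounds $0 \le \lambda^2 < 1$ (with the non-Fuchsian hypothesis for strictness of the upper bound). The only difference is that you spell out the strictness more carefully than the paper does, which is a reasonable addition.
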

\begin{proof}
We apply the Gauss equation:
\begin{equation*}
   K(\Sigma) = -1 + \det(A),
\end{equation*}
where $K(\Sigma)$ is the {\gc} of $\Sigma$ and $A$ is the {\sff} of $\Sigma$.

Thus we have
\begin{equation*}
-K(\Sigma) = 1 - \det(A) = 1 + \lambda^2.
\end{equation*}
We integrate this on $\Sigma$, applying the Gauss-Bonnet theorem since $\Sigma$ is incompressible, to find
\begin{equation*}
   |\Sigma| < |\Sigma| + \int_{\Sigma}\lambda^2 = \Acal_{\hyp} < 2|\Sigma|.
\end{equation*}
\end{proof}

We want to estimate the area of each parallel surface in the {\ef} $\{\Sigma(r)\}_{r \in \R}$:
\begin{pro}
For all $-\infty < r < +\infty$, we have
\begin{equation*}
   {\frac{\Acal_{\hyp}}{2}} < |\Sigma| \le |\Sigma(r)| \le |\Sigma |\cosh^2(r) < \Acal_{\hyp}\cosh^2(r)
\end{equation*}
\end{pro}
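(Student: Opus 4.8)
The plan is to use the explicit formula \eqref{eq:metric of S(r)} for the induced metric on the equidistant surface $\Sigma(r)$, recorded in Lemma 2.2, together with the previous Proposition 3.1 and the Gauss--Bonnet theorem. Since the principal curvatures of $\Sigma$ are $\pm\lambda(x)$ (so $\Sigma$ is minimal), the second fundamental form $A(x)$ is symmetric and trace-free; diagonalizing $A(x)$ at a point $x$, the matrix $\cosh(r)\I+\sinh(r)e^{-2v(x)}A(x)$ is diagonal with entries $\cosh(r)\pm\sinh(r)\,e^{-2v(x)}\lambda(x)$ in suitable coordinates. Hence from \eqref{eq:metric of S(r)} the area form on $\Sigma(r)$ is
\begin{equation*}
   d\mu_r = \det\bigl(\cosh(r)\I+\sinh(r)e^{-2v}A(x)\bigr)\, d\mu_0
          = \bigl(\cosh^2(r) - \sinh^2(r)\,e^{-4v(x)}\lambda(x)^2\bigr)\, d\mu_0,
\end{equation*}
where $d\mu_0$ is the area form of the minimal surface $\Sigma$ in its induced metric.

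First I would establish the upper bound. From the displayed identity, $d\mu_r \le \cosh^2(r)\, d\mu_0$ pointwise, since the subtracted term is nonnegative; integrating over $\Sigma$ gives $|\Sigma(r)| \le |\Sigma|\cosh^2(r)$, and then $|\Sigma|\cosh^2(r) < \Acal_{\hyp}\cosh^2(r)$ follows immediately from Proposition 3.1. Next I would handle the lower bound $|\Sigma| \le |\Sigma(r)|$. Here the key point is that the first variation of area of the equidistant family vanishes at $r=0$ precisely because $\Sigma$ is minimal, and the second variation is controlled by the stability/convexity of the normal flow in an almost Fuchsian manifold. Concretely, one can differentiate $|\Sigma(r)| = \int_\Sigma \bigl(\cosh^2(r) - \sinh^2(r)\,e^{-4v}\lambda^2\bigr)\, d\mu_0$ in $r$: the derivative is $\int_\Sigma \bigl(2\cosh r\sinh r - 2\sinh r\cosh r\, e^{-4v}\lambda^2\bigr)\, d\mu_0 = \sinh(2r)\int_\Sigma(1 - e^{-4v}\lambda^2)\, d\mu_0$, and since $|\lambda(x)|<1 \le e^{2v(x)}$ — one should check $e^{2v}\ge 1$, or simply note $e^{-4v}\lambda^2 < 1$ suffices — the integral is strictly positive, so $|\Sigma(r)|$ is strictly decreasing for $r<0$ and strictly increasing for $r>0$, attaining its minimum $|\Sigma|$ at $r=0$. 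Finally, $\Acal_{\hyp}/2 < |\Sigma|$ is just Proposition 3.1 again, giving the leftmost inequality.

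The main obstacle is justifying the pointwise inequality $e^{-4v(x)}\lambda(x)^2 \le 1$ (equivalently $|\lambda(x)| \le e^{2v(x)}$), which is what makes the $r$-derivative nonnegative and simultaneously guarantees the metric \eqref{eq:metric of S(r)} is nondegenerate for all $r$. This should follow from the almost Fuchsian condition $|\lambda(x)|<1$ combined with a lower bound $e^{2v(x)} \ge 1$ for the conformal factor of the induced metric on the minimal surface — which itself comes from the Gauss equation $K(\Sigma) = -1+\det A = -1-\lambda^2 \le -1$, forcing the induced metric to be at least as "spread out" as a hyperbolic metric, though turning curvature comparison into a pointwise conformal-factor bound requires care (e.g., via the uniformization normalization or a maximum principle argument on $v$). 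Once that pointwise bound is in hand, the rest is the elementary monotonicity computation sketched above, and the string of inequalities assembles directly.
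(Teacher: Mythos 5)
Your computation of the area element contains an error that then creates a spurious obstacle. In conformal coordinates with $g_\Sigma = e^{2v}\delta_{ij}$, the principal curvatures $\pm\lambda(x)$ are the eigenvalues of the \emph{shape operator}, which is the matrix $e^{-2v(x)}A(x)$, not of $A(x)$ itself. (The paper's phrasing in \S2.2 is loose on this point, but formula \eqref{pc4ef} only makes sense with this reading.) Consequently, at a point where the shape operator is diagonalized, the matrix $\cosh(r)\I+\sinh(r)e^{-2v(x)}A(x)$ has eigenvalues $\cosh r\pm\lambda(x)\sinh r$ --- no extra $e^{-2v}$ --- and from \eqref{eq:metric of S(r)} one gets $\sqrt{\det g(x,r)} = e^{2v}\bigl(\cosh^{2}r-\lambda^{2}\sinh^{2}r\bigr)$, hence
\begin{equation*}
   d\mu_r = \bigl(\cosh^{2}r - \lambda(x)^{2}\sinh^{2}r\bigr)\, d\mu_0 ,
\end{equation*}
with no $e^{-4v}$. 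This is exactly the formula the paper uses. Your "main obstacle" --- the pointwise bound $e^{2v(x)}\ge 1$ --- is therefore an artifact of the mis-derived area form, and it is not a step you should try to prove: $v$ is a local conformal factor with no intrinsic normalization, and the Gauss equation does not yield such a pointwise bound.

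Once the area form is corrected, the rest of your argument goes through and is essentially the same as the paper's. Your derivative computation becomes
\begin{equation*}
   \ddl{}{r}\,|\Sigma(r)| = \sinh(2r)\int_{\Sigma}\bigl(1-\lambda^{2}\bigr)\, d\mu ,
\end{equation*}
and positivity of the integral is precisely Proposition 3.1 via the Gauss--Bonnet identity $\int_{\Sigma}\lambda^{2}\,d\mu = \Acal_{\hyp}-|\Sigma|$ (so the integral equals $2|\Sigma|-\Acal_{\hyp}>0$). The paper integrates in $r$ to write $|\Sigma(r)| = |\Sigma| + \sinh^{2}(r)\bigl(2|\Sigma|-\Acal_{\hyp}\bigr)$ and reads off both bounds at a glance; your monotonicity formulation is the differentiated version of the same identity. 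So: fix the area form, drop the conformal-factor digression, and the proof is both correct and in the paper's spirit.
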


\begin{proof}
The area element of $\Sigma (r)$ is given by
\begin{equation}
d\mu(r) = (\cosh^2(r) - \lambda^{2}(x)\sinh^{2}(r))d\mu,
\end{equation}
where $d\mu$ is the area element for the {\ms} $\Sigma$.

We can now compute the surface area:
\begin{align}
  |\Sigma(r)| &= \int_{\Sigma} (\cosh^2(r) - \lambda^{2}(x)\sinh^{2}(r))d\mu  \nonumber \\
         &= \cosh^{2}(r) |\Sigma| - \sinh^{2}(r) \int_{\Sigma}\lambda^{2}(x)d\mu \nonumber \\
         &= \cosh^{2}(r) |\Sigma| -
            \sinh^{2}(r)(\Acal_{\hyp} - |\Sigma|)\nonumber \\
         &= |\Sigma|(\cosh^{2}(r) + \sinh^{2}(r)) - \sinh^{2}(r)\Acal_{\hyp}\nonumber \\
         &= |\Sigma| + \sinh^{2}(r)(2|\Sigma|-\Acal_{\hyp}) \ ,
\end{align}
Here we used the identity
\begin{equation}
\int_{\Sigma}\lambda^2 = \Acal_{\hyp} - |\Sigma|.
\end{equation}
The estimates then follows from the Proposition 5.1 and $(5.2)$.
\end{proof}

We also need the estimates on the {\mc}s of $\Sigma(r)$:
\begin{pro}
$|H(\Sigma(r))| \le 2|\tanh(r)| <2$.
\end{pro}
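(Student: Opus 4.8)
The plan is to read $H(\Sigma(r))$ straight off the foliation formulas in the preceding lemma and then invoke an elementary inequality. Since the principal curvatures of the minimal surface $\Sigma$ are $\pm\lambda$, we set $\lambda_1 = \lambda$, $\lambda_2 = -\lambda$, so that $\lambda_1+\lambda_2 = 0$ and $\lambda_1\lambda_2 = -\lambda^2$. Substituting these values into the mean curvature formula \eqref{mc4ef} kills the $(\lambda_1+\lambda_2)(1+\tanh^2 r)$ term and collapses the denominator, giving
\begin{equation*}
   H(\Sigma(r)) = \frac{2(1-\lambda^2)\tanh(r)}{1 - \lambda^2\tanh^2(r)}\ .
\end{equation*}
(Equivalently, one can just add the two principal curvatures $\mu_j(x,r)$ from \eqref{pc4ef} with $\lambda_j = \pm\lambda$; the terms linear in $\lambda$ cancel in pairs, leaving the same expression.)

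Next I would establish the bound. Because $M^3$ is \af\ we have $|\lambda| < 1$, and of course $|\tanh(r)| < 1$ for every finite $r$; hence the denominator satisfies $0 < 1-\lambda^2 \le 1 - \lambda^2\tanh^2(r)$, the second inequality simply because $\tanh^2(r) \le 1$. Therefore the factor $(1-\lambda^2)/(1-\lambda^2\tanh^2(r))$ lies in $(0,1]$, and
\begin{equation*}
   |H(\Sigma(r))| = \frac{1-\lambda^2}{1-\lambda^2\tanh^2(r)}\,\bigl|2\tanh(r)\bigr| \le 2|\tanh(r)| < 2\ ,
\end{equation*}
which is exactly the claim. (If $r = 0$ the quantity $2\tanh(r)$ is $0$, so there is nothing to prove; and equality in the first inequality holds only when $\lambda = 0$, i.e.\ in the Fuchsian case.)

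There is essentially no obstacle here: the statement is a one-line substitution into Lemma~\eqref{mc4ef} followed by the trivial estimate $\tanh^2 \le 1$. The only point worth keeping in view is that the denominator $1 - \lambda^2\tanh^2(r)$ never vanishes and in fact is bounded below by $1-\lambda^2>0$ uniformly in $r$; this is guaranteed precisely by the \af\ hypothesis $|\lambda|<1$ (the same fact that makes $\{\Sigma(r)\}_{r\in\R}$ an honest foliation), which is what lets the bound hold for all $r \in \R$ at once.
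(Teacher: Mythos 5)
Your proof is correct and follows essentially the same route as the paper: both start from the explicit formula $H(\Sigma(r)) = 2(1-\lambda^2)\tanh(r)/(1-\lambda^2\tanh^2(r))$ obtained from \eqref{mc4ef} and then use $\tanh^2(r)\le 1$; the paper phrases the elementary step as showing $H(x,r)-2\tanh(r)\le 0$ (for $r>0$), while you bound the ratio $(1-\lambda^2)/(1-\lambda^2\tanh^2 r)\le 1$ directly, which handles both signs of $r$ at once.
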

\begin{proof}
We only prove the part when $r > 0$. The {\mc} of the
surface $\Sigma(r)$ is given by the formula $(2.3)$:
\begin{equation*}
   H(x,r)=\frac{2(1-\lambda^{2}(x))\tanh(r)}
   {1-\lambda^{2}(x)\tanh^{2}(r)}\ ,
   \quad\forall\,x\in \Sigma\ .
\end{equation*}
An easy calculation shows
\begin{equation*}
   H(x,r) - 2\tanh(r) =
   {\frac{2\lambda^2\tanh(r)(\tanh^{2}(r)-1)}
   {1-\lambda^2\tanh^{2}(r)}} \le 0.
\end{equation*}
\end{proof}
\subsection{Upper bound for the {\cc} volume}

We obtain an upper bound for the {\hv} of the {\cc} $C(M^3)$ in
this subsection, in terms of the
maximum, $\lambda_0$, of the {\pc} function on the {\ms} $\Sigma$.

We recall from formula $(2.2)$, that the {\pc}s of the surface
$\Sigma(r)$, $\mu_{1}(x,r)$ and
$\mu_{2}(x,r)$ are increasing functions of $r$ for any fixed
$x \in \Sigma$, and they approach
$\pm 2$ as $r \rightarrow \pm\infty$. We also have
$\mu_1(x,r) \le \mu_2(x,r)$ for fixed $r$ and $x$.

We are particularly interested in two critical cases: the values of $r$
when $\mu_1(x,r) = 0$ and
$\mu_2(x,r) = 0$. Elementary algebra shows:
\begin{pro} If we denote
\begin{equation*}
   r_0 = {\frac{1}{2}}\log{\frac{1+\lambda_0}{1-\lambda_0}}\ ,
\end{equation*}
where $\lambda_0 = \max\limits_{x \in S}\{\lambda(x)\}$, then $r_0$ is the least value of $r$ such that
$\mu_1(r,x) > 0$ for all $r > r_0$, and $-r_0$ is the largest value for $\mu_2(r,x) < 0$ such that
$\mu_2(r,x)<0$ for all $r<-r_0$.
\end{pro}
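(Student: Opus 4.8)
The plan is to reduce the statement to an elementary monotonicity argument applied to the explicit formula $(2.2)$. Write the principal curvatures of the {\ms} $\Sigma$ as $\lambda_1(x) = -\lambda(x)$ and $\lambda_2(x) = \lambda(x)$ with $0 \le \lambda(x) < 1$; then $(2.2)$ becomes
\begin{equation*}
   \mu_1(x,r) = \frac{\tanh(r) - \lambda(x)}{1 - \lambda(x)\tanh(r)}\ , \qquad
   \mu_2(x,r) = \frac{\tanh(r) + \lambda(x)}{1 + \lambda(x)\tanh(r)}\ .
\end{equation*}
Since $M^3$ is {\af} we have $\lambda(x) < 1$, and $|\tanh(r)| < 1$ for all $r \in \R$, so the denominators $1 \mp \lambda(x)\tanh(r)$ are strictly positive. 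Hence $\mu_1(x,r)$ has the sign of $\tanh(r) - \lambda(x)$ and $\mu_2(x,r)$ has the sign of $\tanh(r) + \lambda(x)$. (In passing one checks $\mu_1 \le \mu_2$, consistently with the labeling, since $t \mapsto (\tanh(r) + t)/(1 + t\tanh(r))$ is increasing in $t$.)

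Next I would record the key identity $\tanh(r_0) = \lambda_0$: the definition of $r_0$ gives $e^{2r_0} = (1+\lambda_0)/(1-\lambda_0)$, and plugging this into $\tanh(r_0) = (e^{2r_0} - 1)/(e^{2r_0} + 1)$ yields $\lambda_0$ after cancellation. Because $\tanh$ is strictly increasing, for every $r > r_0$ we get $\tanh(r) > \lambda_0 \ge \lambda(x)$ for all $x \in \Sigma$, hence $\mu_1(x,r) > 0$ for all $x$. That $r_0$ is the least such threshold follows by choosing a point $x_0 \in \Sigma$ with $\lambda(x_0) = \lambda_0$ (such a point exists since $\Sigma$ is closed and $\lambda$ is continuous): then $\mu_1(x_0, r_0) = 0$, and for any $r < r_0$ we have $\tanh(r) < \lambda_0 = \lambda(x_0)$, so $\mu_1(x_0, r) < 0$. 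Thus the set of $r$ for which $\mu_1(\cdot, r') > 0$ for all $r' > r$ is exactly $[r_0, \infty)$, with infimum $r_0$.

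The claim for $\mu_2$ and $-r_0$ is entirely symmetric: $\tanh(-r_0) = -\lambda_0$, so for $r < -r_0$ we have $\tanh(r) < -\lambda_0 \le -\lambda(x)$ for all $x$, hence $\mu_2(x,r) < 0$; while at the same point $x_0$ one has $\mu_2(x_0, -r_0) = 0$ and $\mu_2(x_0, r) > 0$ for $r > -r_0$. There is no serious obstacle in this proof; the only points worth a word of care are the non-vanishing of the denominators in $(2.2)$ — this is exactly where the {\af} hypothesis $|\lambda| < 1$ is used — and the fact that the maximum $\lambda_0$ is attained, which is immediate from compactness of $\Sigma$.
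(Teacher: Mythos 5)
Your proof is correct, and it is precisely the ``elementary algebra'' that the paper alludes to without writing out: the authors state Proposition~3.4 immediately after the remark ``Elementary algebra shows:'' and give no further argument. The ingredients you supply --- positivity of the denominators $1 \mp \lambda(x)\tanh r$ (this is where $|\lambda|<1$ enters), the identity $\tanh r_0 = \lambda_0$, strict monotonicity of $\tanh$, and compactness of $\Sigma$ to guarantee the maximum $\lambda_0$ is attained --- are exactly what is needed and are handled correctly.
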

This Proposition tells us when the parallel surfaces in the {\ef} $\{\Sigma(r)\}_{r \in \R}$ become
convex, hence by the definition of the {\cc}, provides an upper bound for the size of the {\cc}.

We denote the region of $M^3$ bounded between the surfaces $\Sigma(-r_0)$ and $\Sigma(r_0)$
by $M(r_0)$, and then the {\cc} $C(M^3)$, is contained in $M(r_0)$. Since $\{\Sigma(r)\}_{r \in \R}$
foliates $M^3$, we can compute the {\hv} of the region $M(r_0)$ by
\begin{align}
   \vol(M(r_0))
       &= \int_{-r_0}^{r_0}|\Sigma(r)|dr \nonumber \\
       &= 2r_{0}|\Sigma| + (2|\Sigma| - \Acal_{\hyp})
          \int_{-r_0}^{r_0}\sinh^{2}(r)dr \nonumber \\
       &= 2r_{0}|\Sigma| + (2|\Sigma| - \Acal_{\hyp})
          \left({\frac{1}{2}}\sinh(2r_0) -
          r_0\right)  \nonumber \\
       &= |\Sigma|\sinh(2r_0) - \Acal_{\hyp}
          \left({\frac{1}{2}}\sinh(2r_0) - r_0\right)
\end{align}
Applying the Proposition 3.2, we obtain the following:
\begin{theorem} The {\hv} of $C(M^3)$ is bounded by:
\begin{eqnarray}
    \vol(C(M^3)) &\le& \Acal_{\hyp}(\cosh{}r_{0}
                    \sinh{}r_{0} + r_0)\nonumber \\
               &=& \Acal_{\hyp}\left({\frac{\lambda_0}
                  {1-\lambda_0^2}}+
                  {\frac{1}{2}}\log{\frac{1+\lambda_0}
                  {1-\lambda_0}}\right).
\end{eqnarray}
\end{theorem}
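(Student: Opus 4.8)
The plan is to combine the convexity statement of Proposition 3.4 with the area estimates of \S 3.1. First I would use Proposition 3.4 (together with the formula $(2.2)$ for the {\pc}s of the {\ef}) to see that for $|r|\ge r_0$ the surface $\Sigma(r)$ is convex: its {\pc}s $\mu_1(x,r),\mu_2(x,r)$ have constant sign once $r$ passes $\pm r_0$. Hence the slab $M(r_0)$ bounded by $\Sigma(\pm r_0)$ has convex boundary, and both components of $M^3\setminus M(r_0)$ are foliated by convex surfaces peeling off toward an end. Lifting to $\H^3$, the preimage of $M(r_0)$ is a nonempty $\Gamma$-invariant convex set, hence contains $\Hull(\Lambda_\Gamma)$, so $C(M^3)\subseteq M(r_0)$ (this is exactly the inclusion asserted just after Proposition 3.4). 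In particular $\vol(C(M^3))\le\vol(M(r_0))$.

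Next I would estimate $\vol(M(r_0))$. Since the equidistant surfaces $\{\Sigma(r)\}_{r\in\R}$ foliate $M^3$, the coarea formula gives $\vol(M(r_0))=\int_{-r_0}^{r_0}|\Sigma(r)|\,dr$, as in $(3.4)$. Feeding in the pointwise bound $|\Sigma(r)|\le|\Sigma|\cosh^2(r)<\Acal_{\hyp}\cosh^2(r)$ from Proposition 3.2 and integrating,
\begin{equation*}
   \vol(C(M^3))\le\int_{-r_0}^{r_0}|\Sigma(r)|\,dr<\Acal_{\hyp}\int_{-r_0}^{r_0}\cosh^2(r)\,dr=\Acal_{\hyp}\big(r_0+\sinh(r_0)\cosh(r_0)\big),
\end{equation*}
using $\int\cosh^2(r)\,dr=\tfrac{r}{2}+\tfrac14\sinh(2r)$ and $\sinh(2r_0)=2\sinh(r_0)\cosh(r_0)$. (Equivalently one may substitute the upper bound $|\Sigma|<\Acal_{\hyp}$ of Proposition 3.2 directly into the closed form for $\vol(M(r_0))$ in $(3.4)$, whose $|\Sigma|$-coefficient is the positive number $\sinh(2r_0)$; this yields the same bound.)

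Finally I would rewrite $r_0+\sinh(r_0)\cosh(r_0)$ in terms of $\lambda_0$. From the definition $r_0=\tfrac12\log\tfrac{1+\lambda_0}{1-\lambda_0}$ one has $e^{2r_0}=\tfrac{1+\lambda_0}{1-\lambda_0}$ (equivalently $\tanh r_0=\lambda_0$ and $\cosh^2 r_0=(1-\lambda_0^2)^{-1}$), so $\sinh(r_0)\cosh(r_0)=\tfrac14\big(e^{2r_0}-e^{-2r_0}\big)=\tfrac{\lambda_0}{1-\lambda_0^2}$, and the asserted bound $\vol(C(M^3))\le\Acal_{\hyp}\big(\tfrac{\lambda_0}{1-\lambda_0^2}+\tfrac12\log\tfrac{1+\lambda_0}{1-\lambda_0}\big)$ follows. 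The only step that is not a short computation is the inclusion $C(M^3)\subseteq M(r_0)$, which rests on the convexity statement of Proposition 3.4 together with the characterization of the {\cc} as the minimal $\pi_1$-carrying convex submanifold; I expect that geometric step to be the crux, while the integral estimate producing the explicit constant is routine.
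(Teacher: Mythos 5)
Your proposal is correct and matches the paper's argument: both deduce $C(M^3)\subseteq M(r_0)$ from Proposition~3.4 and then estimate $\vol(M(r_0))=\int_{-r_0}^{r_0}|\Sigma(r)|\,dr$ using the area bounds of Proposition~3.2 together with the hyperbolic identity $\sinh r_0\cosh r_0=\lambda_0/(1-\lambda_0^2)$. The only cosmetic difference is that the paper first evaluates the integral in closed form (display~(3.4)) and then substitutes $|\Sigma|<\Acal_{\hyp}$, whereas your primary route bounds the integrand pointwise by $\Acal_{\hyp}\cosh^2 r$ before integrating --- and you already note the equivalence of these two orderings in your parenthetical remark.
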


This estimate in the Theorem 3.5 can also be obtained via an application of the
Cauchy-Schwarz inequality and the Propositions 3.2 and 3.3.

When $r_0$, or equivalently, $\lambda(x) =0$ for all $x \in \Sigma$, this is the case of
$M$ being Fuchsian, in which case, the hyperbolic volume of $C(M^3)$ is zero. We want to
measure how the volumes vary for small $\lambda_0$. From Taylor series expansion we have
\begin{cor}
For small $\lambda_0$, we have the following expansion:
\begin{equation*}
   \vol(C(M^3)) \le \Acal_{\hyp}\left(2\lambda_0 + {\frac{4}{3}}\lambda_{0}^{3} + O(\lambda_0^5)\right).
\end{equation*}
\end{cor}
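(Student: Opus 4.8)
The plan is to take the closed-form estimate of Theorem 3.5, namely
\begin{equation*}
\vol(C(M^3)) \le \Acal_{\hyp}\left(\frac{\lambda_0}{1-\lambda_0^2} + \frac{1}{2}\log\frac{1+\lambda_0}{1-\lambda_0}\right),
\end{equation*}
and expand the quantity in parentheses as a power series in $\lambda_0$ about $\lambda_0 = 0$. Since $M^3$ is \af{}, we have $0 \le \lambda_0 < 1$, so both summands are real-analytic at $\lambda_0 = 0$; moreover each is an odd function of $\lambda_0$, so only odd powers of $\lambda_0$ appear in the expansion.

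First I would expand the rational term by the geometric series: for $|\lambda_0| < 1$,
\begin{equation*}
\frac{\lambda_0}{1-\lambda_0^2} = \lambda_0\sum_{k\ge 0}\lambda_0^{2k} = \lambda_0 + \lambda_0^3 + O(\lambda_0^5).
\end{equation*}
Next I would expand the logarithmic term by writing $\frac{1}{2}\log\frac{1+\lambda_0}{1-\lambda_0} = \frac{1}{2}\bigl(\log(1+\lambda_0) - \log(1-\lambda_0)\bigr)$ and substituting the Maclaurin series $\log(1\pm\lambda_0) = \pm\lambda_0 - \tfrac{1}{2}\lambda_0^2 \pm \tfrac{1}{3}\lambda_0^3 - \cdots$; the even powers cancel, leaving
\begin{equation*}
\frac{1}{2}\log\frac{1+\lambda_0}{1-\lambda_0} = \lambda_0 + \frac{1}{3}\lambda_0^3 + O(\lambda_0^5).
\end{equation*}

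Adding the two expansions and multiplying through by $\Acal_{\hyp}$ gives
\begin{equation*}
\vol(C(M^3)) \le \Acal_{\hyp}\left(2\lambda_0 + \frac{4}{3}\lambda_0^3 + O(\lambda_0^5)\right),
\end{equation*}
which is the assertion. There is no real obstacle here: both series converge absolutely for $|\lambda_0| < 1$, so their remainder terms simply add, and the inequality of Theorem 3.5 is preserved since we have only rewritten its right-hand side. One could equally well keep the full series $\sum_{k\ge 0}\lambda_0^{2k+1}\bigl(1 + \tfrac{1}{2k+1}\bigr)$ and truncate at whatever order is desired.
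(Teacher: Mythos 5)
Your proposal is correct and matches the paper's approach: the corollary is obtained simply by Taylor-expanding the closed-form bound of Theorem 3.5 about $\lambda_0=0$, and your computation of the two Maclaurin series (geometric series for $\lambda_0/(1-\lambda_0^2)$ and the odd-power expansion of $\tfrac{1}{2}\log\tfrac{1+\lambda_0}{1-\lambda_0}$) and their sum $2\lambda_0+\tfrac{4}{3}\lambda_0^3+O(\lambda_0^5)$ is exactly right.
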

\subsection{{\hd} of the limit set}
We denote $C_1(M^3)$ the hyperbolic radius one neighborhood of the {\cc} $C(M^3)$ in $M^3$. An
easy calculation from $(3.4)$ gives us
\begin{equation*}
\vol(C_1(M^3)) \le \vol(M(r_0+1)) \le  \Acal_{\hyp}\left({\frac{1}{2}}\sinh(2r_0+2) - r_0 -1\right),
\end{equation*}
where  $r_0 = {\frac{1}{2}}\log{\frac{1+\lambda_0}{1-\lambda_0}}$. Therefore we have
\begin{equation}
\vol(C_1(M^3)) \le  \Acal_{\hyp}\left({\frac{1}{2}}\sinh(\log{\frac{1+\lambda_0}{1-\lambda_0}}+2) -
{\frac{1}{2}}\log{\frac{1+\lambda_0}{1-\lambda_0}} -1\right).
\end{equation}
Since {\qf} manifolds are geometrically finite and infinite volume, and we assume $M^3$ is not Fuchsian,
a direct application of the main theorem from Burger-Canary (\cite{BC94}) gives:
\begin{pro}
Let $M^3$ be {\af}, and $\Lambda_0(M^3)$ be the bottom of the $L^2$-spectrum of $-\Delta$ on
$M^3$, and $D(\Lambda_{\Gamma})$ be the {\hd} of the limit set $\Lambda_{\Gamma}$ of $M^3$. Then
we have
\begin{enumerate}
\item
\begin{equation}
\Lambda_0(M^3) \ge  {\frac{K_3}{\Acal_{\hyp}^2\left({\frac{1}{2}}\sinh(\log{\frac{1+\lambda_0}{1-\lambda_0}}+2) -
{\frac{1}{2}}\log{\frac{1+\lambda_0}{1-\lambda_0}} -1\right)^2}}.
\end{equation}
\item
\begin{equation}
D(\Lambda_{\Gamma}) \le  2-{\frac{K_3}{\Acal_{\hyp}^2\left({\frac{1}{2}}\sinh(\log{\frac{1+\lambda_0}{1-\lambda_0}}+2) -
{\frac{1}{2}}\log{\frac{1+\lambda_0}{1-\lambda_0}} -1\right)^2}}.
\end{equation}
\end{enumerate}
Here $K_3$ can be chosen such that $K_3 > 10^{-11}$.
\end{pro}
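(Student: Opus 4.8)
The plan is to obtain both inequalities by combining the volume estimate $(3.6)$ with two standard facts from the spectral theory of {\kg}s: the Burger--Canary lower bound for the bottom of the $L^2$-spectrum, and the Patterson--Sullivan dictionary relating that bottom to the critical exponent, equivalently to $D(\Lambda_{\Gamma})$.

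For (i), I would quote the main theorem of \cite{BC94}: for every {\gf} hyperbolic {\tm} $N = \H^3/\Gamma$ of infinite volume,
\begin{equation*}
   \Lambda_0(N) \ge \frac{K_3}{\vol(C_1(N))^2},
\end{equation*}
where $C_1(N)$ denotes the hyperbolic radius-one neighborhood of the {\cc} of $N$ and $K_3 > 10^{-11}$ is a universal constant. Since $M^3$ is {\qf}, it is {\gf} and has infinite volume, so the hypotheses hold. Inequality $(3.6)$ supplies an explicit upper bound for $\vol(C_1(M^3))$ in terms of $\Acal_{\hyp}$ and $\lambda_0$; because the right-hand side above decreases when $\vol(C_1)$ increases, substituting $(3.6)$ gives exactly the claimed lower bound for $\Lambda_0(M^3)$.

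For (ii), I would use Sullivan's theorem (building on Patterson): for a {\gf} subgroup $\Gamma \subset \PSL(2,\C)$ the critical exponent equals $D(\Lambda_{\Gamma})$, and when $D(\Lambda_{\Gamma}) > 1$ one has $\Lambda_0(M^3) = D(\Lambda_{\Gamma})\bigl(2 - D(\Lambda_{\Gamma})\bigr)$. The hypothesis $D(\Lambda_{\Gamma}) > 1$ holds here because $M^3$ is {\qf} and not Fuchsian. Solving this quadratic for the root exceeding $1$ yields $D(\Lambda_{\Gamma}) = 1 + \sqrt{1 - \Lambda_0(M^3)}$, a decreasing function of $\Lambda_0(M^3)$; feeding in the lower bound for $\Lambda_0(M^3)$ from (i) and using $\sqrt{1-x} \le 1 - \tfrac{x}{2}$ for $x \in [0,1]$ then gives the stated upper bound for $D(\Lambda_{\Gamma})$, the harmless factor $\tfrac12$ being absorbed into $K_3$ (which still satisfies $K_3 > 10^{-11}$).

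There is no genuine analytic difficulty here: the argument is a concatenation of the volume computation of \S 3.2 with two quoted theorems. The only points needing care are verifying that the hypotheses of the Burger--Canary estimate are met (geometric finiteness and infinite volume of $M^3$, both immediate for {\qf} manifolds) and noting that it is the \emph{one-neighborhood} estimate $(3.6)$ --- not the convex-core volume bound $(3.5)$ --- that is the relevant input.
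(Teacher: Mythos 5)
Your route for part (i) is exactly the paper's: quote the Burger--Canary lower bound $\Lambda_0(N)\ge K_3/\vol(C_1(N))^2$ for geometrically finite infinite-volume hyperbolic $3$-manifolds, check that a non-Fuchsian quasi-Fuchsian manifold satisfies the hypotheses, and plug in the one-neighborhood volume bound $(3.6)$ --- and you are right that it is $(3.6)$, not the bare convex-core estimate $(3.5)$, that must be used. For part (ii) you diverge: the paper simply reads off the Hausdorff-dimension inequality as part of the Burger--Canary statement itself, whereas you re-derive it from (i) via Sullivan's identity $\Lambda_0 = D(2-D)$ for $D>1$. That derivation is sound in outline, but notice what it actually produces: $D \le 1 + \sqrt{1-\Lambda_0} \le 2 - \tfrac12\Lambda_0 \le 2 - \tfrac{K_3}{2\,\vol(C_1)^2}$, so the constant in (ii) is $K_3/2$, not $K_3$. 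You wave this off as ``absorbed into $K_3$,'' but if $K_3$ is the constant from the $\Lambda_0$ estimate in (i), replacing it by $K_3/2$ in (ii) only gives $K_3/2 > \tfrac12\cdot 10^{-11}$, which is weaker than the claimed $K_3 > 10^{-11}$. The paper sidesteps this because Burger--Canary already state the dimension bound with a constant that has this factor built in; if you insist on deriving (ii) from (i) yourself, you either need to know $K_3 > 2\times 10^{-11}$ from the source, or you must state (ii) with $K_3/2$ and adjust the numerical claim accordingly. That is the one genuine loose end; the rest of the proposal matches the intended argument.
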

We note that while the volume estimate of the {\cc} of $M^3$ (Theorem 3.5) is effective for small maximal
{\pc} $\lambda_0$ of the {\ms} $\Sigma$, above estimates on $\Lambda_0(M^3)$ and $D(\Lambda_{\Gamma})$
are not as effective. To obtain a better estimate, we consider the {\lms} $\Lambda_{\Gamma}$ of $M^3$ as a
$k$-{\qci} (an image of a circle under a $k$-{\qcm}).

A {\it $k$-{\qc}} mapping $f$ is a homemorphism of planar domains, locally in the Sobolev class $W_2^1$
such that its {\it Beltrami coefficient} $\mu_f = \frac{\bar\partial f}{\partial f}$ has bounded $L^{\infty}$ bound:
$\|\mu_f\| \le k < 1$. One can visualize $f$ infinitesimally maps a round circle to an ellipse with a bounded
 dilatation $K = \frac{1+k}{1-k}$, where $k \in [0,1)$. Clearly, the mapping $f$ is conformal when $k =0$. This is an
 important generalization of conformal maps. The study of {\qcm}s is a major
 breakthrough of geometric function theory behind Teichm\"{u}ller's insight and Ahlfors-Bers' revival of {\Tt}.

\begin{proof}[Proof of Theorem 1.3] Let $\Sigma \subset{}M^3$ be the
incompressible {\ms} with principal curvatures in $(-1,1)$. We know that the
normal bundle over $\Sigma$ is trivial, i.e., the geodesics perpendicular
to $\Sigma$ are disjoint from each other. Therefore, any point $p\in{}M$
can be represented by $p=(x,r)$, here $x$ is the projection of $p$
to $\Sigma$ along the geodesic which passes through $p$ and is
perpendicular to $\Sigma$, and $r$ is the (signed) distance between $p$
and $x$.

Now we can construct aFuchsian $3$-manifold $N=\Sigma \times\R$ as follows. Suppose that the
induced metric on $\Sigma \subset{}M^3$ is given by $g(x)=e^{2v(x)}I$, here
$v(x)$ is a smooth function defined on $\Sigma$ and $I$ is the
$2\times{}2$ identity matrix. The metric $\bar\rho$ of $N$ is given
by
\begin{equation*}
   \bar\rho(x,r)=
   \begin{pmatrix}
      \rho(x,r) & 0 \\
      0         & 1
   \end{pmatrix}\ ,
\end{equation*}
here $\rho(x,r)=e^{2v(x)}\cosh{r}I$. By the construction, it's easy
to know that the surface $\Sigma\times\{0\}$ is totally geodesic.
Similarly, any point $q\in{}N$ can be represented by $q=(y,s)$, here
$y$ is the projection of $q$ to $\Sigma\times\{0\}$ and $s$ is the
distance between $q$ and $y$.

Then we may define a map $\varphi:N\to M^3$ by
$\varphi(x,r)=(x,r)$ for $(x,r)\in N$.
By the result in \cite[p. 162]{Uhl83}, the map $\varphi$ is a quasi-isometry.
Lift $\varphi$ to the map $\tilde\varphi:\H^3\to\H^3$,
then $\tilde\varphi$ is also a quasi-isometry. By the results in
\cite[Theorem 9]{Geh62}, \cite[Theorem 12.1]{Mos68},
\cite[Corollary 5.9.6]{Thu82} and
\cite[Theorem 3.22]{MT98}, $\tilde\varphi$ can be extend to an
automorphism
\begin{equation}
   \breve\varphi:\H^3\cup\widehat{\C}\to\H^3\cup\widehat{\C}
\end{equation}
such that the restriction $\breve\varphi|{\widehat\C}=:f$ is a
quasiconformal mapping. In particular, $f$ maps $S^{2}_{\pm}$ to
$\Omega_{\pm}(\Gamma)$, here
$S^{2}_{\pm}=S^{2}_{\infty}\setminus{}S^{1}$ are hemispheres such
that $\partial{}S^{2}_{+}=S^{1}=\partial{}S^{2}_{-}$, and
$f(S^{1})=\Lambda_\Gamma$ respectively.

We claim that $f|{S^{2}_{+}}:S^{2}_{+}\to\Omega_{+}(\Gamma)$ is a
$k$-quasiconformal mapping, with the dilatation $K = \frac{1+k}{1-k}$, and 
\begin{equation*}
   K<\frac{1+\lambda_{0}}{1-\lambda_{0}}\ .
\end{equation*}
Let $\Pi$ be the lift of the totally geodesic minimal surface
$\Sigma \times\{0\}\subset{}N$, and let $\widehat{\Sigma}$ be the lift of the
surface $\Sigma\subset{}M^3$. Recall that the identity map between $\Pi$
and $\widehat{\Sigma}$ is an isometry, and we can define hyperbolic Gauss
maps $G'_{+}:\Pi\to{}S^{2}_{+}$ and 
$G''_{+}:\widehat{\Sigma} \to\Omega_{+}(\Gamma)$ (\cite{Eps86}) such that
we have the following commutative diagram
\begin{equation*}
\begin{CD}
   S^{2}_{+} @>{f}>> \Omega_{+}(\Gamma) \\
    @A{G'_{+}}AA    @AA{G''_{+}}A \\
   \Pi @>{\id}>> \widehat{\Sigma}
\end{CD}
\end{equation*}
Since $G'_{+}$ is a conformal mapping and $\id$ is an isometry, we
know that $G''_{+}\circ{}f$ is also a conformal mapping. By
Proposition 5.1 and Corollary 5.3 in \cite{Eps86}, $(G''_{+})^{-1}$
is a $k$-quasiconformal mapping, so is $f$.

In particular, $\Lambda_{\Gamma}=f(S^{1})$ is a $k$-quasicircle.
Recently, Smirnov (\cite{Smi09}) proved Astala's conjecture
(\cite{Ast94}): the Hausdorff dimension of a $k$-quasicircle is at
most $1+k^2$. Now
\begin{equation*}
k = \frac{K -1}{K+1} < \lambda_0.
\end{equation*}
This proves the Theorem 1.3.

 \end{proof}
\section{Proof of Theorem 1.1: uniform bounds}
The next two sections are set up to prove the Theorem 1.1. The strategy is standard, though
technical: we establish the uniform bounds of the square norm of the {\sff}, $|A|^2$, and the square of
the {\mc}, $H^2$, as well as their derivatives; we bound the evolving surfaces $S(t)$ in
a compact region of $M^3$ by a {\hf} estimate; these bounds enables us to extend the solution
of $(1.1)$ beyond its maximal finite time interval; we show exponential convergence to prove the
uniqueness of the limiting surface.

\subsection{Some evolution equations}

In this subsection, we collect and derive a number of evolution equations of some quantities and operators on
$S(t)$, $t\in[0,T)$, which are involved in our calculations. These quantities and operators are:
\begin{enumerate}
   \item
   the induced metric of $S(t)$: $g(t)=\{g_{ij}(t)\}$;
   \item
    the {\sff} of $S(t)$: $A(\cdot,t)=\{h_{ij}(\cdot,t)\}$;
   \item
   the {\mc} of $S(t)$ with respect to the normal vector pointing to $\Sigma$: $H(\cdot,t)=g^{ij}h_{ij}$;
   \item
   the square norm of the {\sff} of $S(t)$:
         \begin{equation*}
            |A(\cdot,t)|^{2}=g^{ij}g^{kl}h_{ik}h_{jl}\ ;
         \end{equation*}
   \item
   the covariant derivative of $S(t)$ is denoted by $\nabla$ and the Laplacian on $S(t)$ is given by
         $\Delta=g^{ij}\nabla_{i}\nabla_{j}$.
\end{enumerate}

We start with some standard evolution equations:
\begin{lem} (\cite{Hui86})
The evolution equations of the induced metric $g_{ij}$, the normal vector field $\nu$, and the area element
$d\mu$ are given by
\begin{align}
   \ppl{}{t}\,g_{ij}&=-2Hh_{ij}\ ,\\
   \ppl{}{t}\,\nu&=\nabla H\ ,\\
   \ppl{}{t}\,d\mu&=-H^{2}d\mu\,
\end{align}
\end{lem}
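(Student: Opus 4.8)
The plan is to derive all three identities directly from the flow equation $\partial_t F=-H\nu$, using only that $\nablabar$ is the torsion-free, metric connection of $M^3$ and that the coordinate fields $\partial_t$ and $\partial_i$ on the parameter domain $S\times[0,T)$ commute, so that $\nablabar_{\partial_t}(\partial_i F)=\nablabar_{\partial_i}(\partial_t F)$.

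For the metric, I would start from $g_{ij}=\inner{\partial_i F}{\partial_j F}$ and differentiate in $t$; metric compatibility and the commutation relation give $\partial_t g_{ij}=\inner{\nablabar_{\partial_i}(\partial_t F)}{\partial_j F}+\inner{\partial_i F}{\nablabar_{\partial_j}(\partial_t F)}$. Substituting $\partial_t F=-H\nu$ and expanding $\nablabar_{\partial_i}(-H\nu)=-(\partial_i H)\nu-H\,\nablabar_{\partial_i}\nu$, the $\nu$-terms drop out since $\nu\perp\partial_j F$, and $\inner{\nablabar_{\partial_i}\nu}{\partial_j F}=h_{ij}$ by the sign convention of \S 2.2; this yields $\partial_t g_{ij}=-2Hh_{ij}$. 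For the normal, differentiating $\inner{\nu}{\nu}=1$ shows $\nablabar_{\partial_t}\nu$ is tangent to $S(t)$, so I write $\nablabar_{\partial_t}\nu=a^k\partial_k F$ and recover the coefficients by pairing with $\partial_j F$: differentiating $\inner{\nu}{\partial_j F}=0$ in $t$ and using the commutation relation again gives $\inner{\nablabar_{\partial_t}\nu}{\partial_j F}=-\inner{\nu}{\nablabar_{\partial_j}(-H\nu)}=\partial_j H$, so $a^k g_{kj}=\partial_j H$, i.e. $a^k=g^{kj}\partial_j H$, which means $\nablabar_{\partial_t}\nu=\nabla H$, the intrinsic gradient of $H$ on $S(t)$. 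For the area element, writing $d\mu=\sqrt{\det(g_{ij})}\,dx$ and using Jacobi's formula $\partial_t\det g=(\det g)\,g^{ij}\partial_t g_{ij}$ together with the first identity and the defining relation $g^{ij}h_{ij}=H$ gives $\partial_t\sqrt{\det g}=\tfrac12\sqrt{\det g}\,g^{ij}(-2Hh_{ij})=-H^2\sqrt{\det g}$, hence $\partial_t d\mu=-H^2 d\mu$.

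I do not expect a genuine obstacle here: the content is elementary once conventions are fixed. The only points demanding care are the commutation $\nablabar_{\partial_t}(\partial_i F)=\nablabar_{\partial_i}(\partial_t F)$, which rests on $[\partial_t,\partial_i]=0$ and the torsion-freeness of $\nablabar$, and the consistent use of the convention $h_{ij}=\inner{\nablabar_{e_i}\nu}{e_j}$ from \S 2.2 with $\nu$ the unit normal pointing away from $\Sigma$; a sign slip there would flip $(4.1)$ and propagate to $(4.2)$ and $(4.3)$.
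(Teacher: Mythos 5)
Your derivation is correct, and it is the standard one: the paper itself does not prove this lemma but simply cites Huisken, and what you have written out is precisely the calculation that reference contains. All three identities follow from $\partial_t F=-H\nu$ together with metric compatibility of $\nablabar$, the commutation $\nablabar_{\partial_t}\partial_i F=\nablabar_{\partial_i}\partial_t F$, and the sign convention $h_{ij}=\inner{\nablabar_{e_i}\nu}{e_j}$ fixed in \S 2.2; you have tracked each of these consistently, including the potentially delicate sign in the normal-vector equation where $\inner{\nablabar_{\partial_t}\nu}{\partial_j F}=-\inner{\nu}{\nablabar_{\partial_j}(-H\nu)}=\partial_j H$, and the Jacobi-formula step for the area element.
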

It is clear from $(4.3)$ that the {\mcf} decreases the areas of the {\es}s.

We also need the following evolution equations for the {\mc} $H(\cdot,t)$, and the square norm of the {\sff}
$|A(\cdot,t)|^2$:
\begin{lem}
\begin{align}
      \left(\ppl{}{t}-\Delta\right)\,H=
        &\,H(|A|^{2}-2)\ ,\\
            \left(\ppl{}{t}-\Delta\right)\,|A|^{2}=&\, -2|\nabla{}A|^{2}+2|A|^{2}(|A|^2-2)+4(2|A|^2 -H^2)\ .
\end{align}
\end{lem}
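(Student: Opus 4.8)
The plan is to deduce both identities from the general evolution equations for mean curvature flow in an arbitrary Riemannian target and then insert the fact that the ambient metric on $M^{3}$ has constant sectional curvature $-1$. Recall that for a solution of $(1.1)$ in an ambient three-manifold $(N^{3},\gbar)$ the standard computations (\cite{Hui86}), based on the Gauss--Codazzi equations and the Simons-type commutation identity, give
\begin{equation*}
   \left(\ppl{}{t}-\Delta\right)H = H|A|^{2} + \overline{\mathrm{Ric}}(\nu,\nu)\,H
\end{equation*}
and
\begin{equation*}
   \ppl{}{t}h_{ij} = \Delta h_{ij} + |A|^{2}h_{ij} - 2H\,h_{ik}g^{kl}h_{lj} + Q_{ij},
\end{equation*}
where $Q_{ij}$ is assembled from the ambient curvature tensor and its covariant derivative $\nablabar\,\overline{\mathrm{Rm}}$; schematically it consists of blocks of the form $\overline{\mathrm{Rm}}(\nu,\cdot,\nu,\cdot)$, $\overline{\mathrm{Ric}}(\nu,\nu)\,h_{ij}$ and $\overline{\mathrm{Rm}}(\cdot,\cdot,\cdot,\cdot)\ast A$, together with the derivative terms $\nablabar\,\overline{\mathrm{Rm}}$.

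Since $M^{3}=\H^{3}/\Gamma$ has constant curvature $-1$, we have $\overline{R}_{\alpha\beta\gamma\delta}=-(\gbar_{\alpha\gamma}\gbar_{\beta\delta}-\gbar_{\alpha\delta}\gbar_{\beta\gamma})$, hence $\overline{\mathrm{Ric}}(\nu,\nu)=-2$, and crucially $\nablabar\,\overline{\mathrm{Rm}}=0$, so every derivative-of-curvature contribution in $Q_{ij}$ drops out. Plugging $\overline{\mathrm{Ric}}(\nu,\nu)=-2$ into the first display yields $(4.4)$ at once:
\begin{equation*}
   \left(\ppl{}{t}-\Delta\right)H = H|A|^{2}-2H = H(|A|^{2}-2).
\end{equation*}
For $(4.5)$ I differentiate $|A|^{2}=g^{ik}g^{jl}h_{ij}h_{kl}$ in time, feeding in $\ppl{}{t}g^{ik}=2H\,h^{ik}$ from Lemma 4.2, the $h_{ij}$-evolution above, and the identity $2h^{ij}\Delta h_{ij}=\Delta|A|^{2}-2|\nabla A|^{2}$. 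The cubic-in-$A$ terms produced by $-2H\,h_{ik}g^{kl}h_{lj}$ and by $\ppl{}{t}g^{ik}$ cancel, leaving the Euclidean reaction term $2|A|^{4}$ and the curvature contraction $2h^{ij}Q_{ij}$. Because the ambient metric is a space form and $S(t)$ is a surface, $2h^{ij}Q_{ij}$ must be a polynomial in $|A|^{2}$ and $H^{2}$ alone; evaluating it with the two-dimensional identities $H^{2}-|A|^{2}=2\det A$ and $2|A|^{2}-H^{2}=(\lambda_{1}-\lambda_{2})^{2}$ gives $2h^{ij}Q_{ij}=4(|A|^{2}-H^{2})=-8\det A$. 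Hence
\begin{equation*}
   \left(\ppl{}{t}-\Delta\right)|A|^{2}=-2|\nabla A|^{2}+2|A|^{4}+4|A|^{2}-4H^{2},
\end{equation*}
and regrouping $2|A|^{4}+4|A|^{2}-4H^{2}=2|A|^{2}(|A|^{2}-2)+4(2|A|^{2}-H^{2})$ produces exactly $(4.5)$.

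The step I expect to require the most care is the identification $2h^{ij}Q_{ij}=-8\det A$. In a general target, $Q_{ij}$ splits into several \emph{a priori} independent curvature blocks (the normal--normal sectional terms, the fully tangential contractions, the $\overline{\mathrm{Ric}}(\nu,\nu)$-proportional piece, and the $\nablabar\,\overline{\mathrm{Rm}}$ terms), and one has to verify that after imposing constant sectional curvature and $n=2$ they recombine into the single scalar $-8\det A=4(|A|^{2}-H^{2})$, rather than leaving, say, an $\overline{\mathrm{Ric}}(\nu,\nu)|A|^{2}$ term and a separate curvature contraction surviving. Throughout, the sign bookkeeping dictated by the flow convention $\ppl{}{t}F=-H\nu$ (with $-\nu$ pointing toward $\Sigma$) must be carried consistently, since $h_{ij}$ and $H$ change sign with the choice of normal while $|A|^{2}$, $H^{2}$ and $|A|^{2}-2$ do not.
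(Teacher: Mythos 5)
Your proposal is correct and follows essentially the same route as the paper: both start from Huisken's general evolution equations for $H$ and the second fundamental form in a Riemannian ambient, then specialize by inserting the constant-sectional-curvature data of $\H^{3}$ ($\overline{\mathrm{Ric}}(\nu,\nu)=-2$, $\nablabar\,\overline{\mathrm{Rm}}=0$, $\bar R_{3i3j}=-g_{ij}$), and both rely on the Simons-type commutation identity $2h^{ij}\Delta h_{ij}=\Delta|A|^{2}-2|\nabla A|^{2}$ to produce the $-2|\nabla A|^{2}$ term. You have correctly identified the only delicate point, namely that the curvature contraction must come out to $2h^{ij}Q_{ij}=4(|A|^{2}-H^{2})=-8\det A$; working through the signs with the paper's convention $\bar R_{3i3j}=-g_{ij}$ one finds $Q_{ij}=2h_{ij}-2Hg_{ij}$, which indeed yields exactly this contraction, so the assertion you flagged does check out.
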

\begin{proof}
These equations are deduced for general Riemannian manifolds in \cite{Hui86}. In our case of hyperbolic {\tm},
the ambient space $M^3$ has constant sectional curvature $-1$, and the Ricci curvature $Ric(\nu,\nu) = -2$ for
any unit normal vector $\nu$.

The lemma then follows from combining these explicit curvature conditions and curvature equations
$\bar{R}_{3i3j} = -g_{ij}$, as well as the well-known Simons' identity (see \cite{Sim68} or \cite{SSY75}), satisfied
by the {\sff} $h_{ij}$:
\begin{equation*}
   \Delta{}h_{ij}=\nabla_{i}\nabla_{j}H-(|A|^{2}-2)h_{ij}+ Hh_{il}h_{lj}+Hg_{ij}\ .
\end{equation*}
\end{proof}

Our estimates will also involve the {\it height function} $u(x,t)$ and the {\it gradient function} $\Theta(x,t)$
on {\es}s $S(t)$:
\begin{align}
   u(x,t)&=\ell(F(x,t))\\
      \label{eq:gradient function}
   \Theta(x,t)&=\langle{\nu(x,t)},{\n}\rangle\ .
\end{align}
Here $T_{\max}$ is the right endpoint of the maximal closed time interval on which the solution to exists, and
$\ell(p) = \pm dist(p,S)$ for all $p \in M^3$, the distance to the reference surface $S$.

We always have $\Theta (x,t) \in [0,1]$. It is clear that the surface $S(t)$ is a graph over $S$ if
$\Theta > 0$ on $S(t)$.

Our main assumption on the initial surface $S_0$ is that $\Theta(x,0) \ge c_0 > 0$. Geometrically, one can
view the initial surface has bounded geometry.

\begin{lem}(\cite {Bar84}, \cite{EH91}) The evolution equations of $u$ and $\Theta$ have the following forms
\begin{align}
   \ppl{}{t}\,u
        =&\,-H\Theta,\\
               \label{eq:evolution of gradient}
    \left(\ppl{}{t}-\Delta\right)\Theta=
       &\,(|A|^{2}-2)\Theta+\n(H_{\n})
          -H\inner{\nablabar_{\nu}\n}{\nu}\ .
\end{align}
Here $\nablabar$ is the gradient operator with respect to the {\hym}, $\div$ is the divergence on $S(t)$, and
$\n(H_{\n})$ is the variation of {\mc} function of $S(t)$ under the deformation vector field $\n$.
\end{lem}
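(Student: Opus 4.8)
The plan is to derive each identity by a direct computation along the flow, using the standard first-variation formulas already recorded in Lemma 2.4 together with the geometry of the ambient hyperbolic manifold $M^3$. First I would treat the height function $u(x,t) = \ell(F(x,t))$, where $\ell$ is the signed distance to the reference surface $S$. Since $\nablabar \ell$ is the unit vector field tangent to the geodesics normal to $S$ (the normal field of the equidistant foliation $\{S(r)\}$), differentiating along the flow gives $\ppl{}{t}u = \inner{\nablabar\ell}{\ppl{}{t}F} = \inner{\nablabar\ell}{-H\nu}$. The quantity $\inner{\nablabar\ell}{\nu}$ is precisely the gradient function $\Theta$ once one notes that along $S(t)$ the vector $\n$ (the unit normal on $S$ transported along the foliation) agrees with $\nablabar\ell$; hence $\ppl{}{t}u = -H\Theta$. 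This step is short and essentially bookkeeping about what $\ell$, $\n$, and $\Theta$ mean.

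The substantive part is the evolution equation for $\Theta = \inner{\nu}{\n}$. The plan is to compute $\ppl{}{t}\Theta$ and $\Delta\Theta$ separately and combine. For the time derivative, I would use $\ppl{}{t}\nu = \nabla H$ from $(4.2)$, giving a term $\inner{\nabla H}{\n}$, plus a term coming from the change of $\n$ as the footpoint moves, namely $\inner{\nu}{\nablabar_{-H\nu}\n} = -H\inner{\nablabar_\nu\n}{\nu}$; this already produces two of the three terms on the right of \eqref{eq:evolution of gradient}. For the Laplacian, I would expand $\Delta\inner{\nu}{\n} = g^{ij}\nabla_i\nabla_j\inner{\nu}{\n}$ using the Weingarten and Gauss equations on $S(t)$: derivatives of $\nu$ bring in the second fundamental form $h_{ij}$, and one invokes the Codazzi equation together with the ambient curvature relations $\bar R_{3i3j} = -g_{ij}$ (constant curvature $-1$). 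The cross terms involving $\nabla H$ are arranged to cancel against the time-derivative contribution, and the terms quadratic in $h_{ij}$ collect into the coefficient $(|A|^2 - 2)\Theta$ — the $-2$ again being the contribution of the ambient Ricci curvature $Ric(\nu,\nu) = -2$, exactly as in the derivation of $(4.4)$–$(4.5)$.

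The main obstacle is organizing the curvature bookkeeping in the computation of $\Delta\Theta$ so that the first-order terms $\inner{\nabla H}{\n}$ cancel cleanly and the remaining terms assemble into the stated form; this is where the Simons-type identity for $h_{ij}$ and the constant-curvature simplifications must be deployed carefully, keeping track of which derivatives of $\n$ are tangential versus normal. The cleanest route is to observe that $\n$ restricted along $S(t)$ is a Jacobi-type field of the normal geodesic congruence of $S$, so its ambient derivatives are controlled by $(2.1)$–$(2.2)$; one then recognizes $\n(H_{\n})$ as the variation of the mean curvature of $S(t)$ under the deformation by $\n$, which is exactly the first-variation quantity that absorbs the leftover first-order terms. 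Since both identities are special cases of computations carried out in \cite{Bar84} and \cite{EH91} for graphical solutions of mean curvature flow in Lorentzian and Riemannian ambient spaces, I would state them in that generality and then specialize to $M^3$ hyperbolic, citing those references for the detailed manipulation and presenting only the adaptation that replaces the ambient curvature terms by $-2$ and $\bar R_{3i3j} = -g_{ij}$.
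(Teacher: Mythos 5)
Your proposal reconstructs, in outline, the computation that the paper does not carry out itself but simply attributes to \cite{Bar84} and \cite{EH91}, and the strategy you describe is the correct one. For the height function, identifying $\nablabar\ell$ with the foliation normal $\n$ (a unit gradient field since $\ell$ is a signed distance) immediately gives $\partial_t u = \inner{\nablabar\ell}{-H\nu} = -H\Theta$, exactly as you say. For the gradient function, your split of $(\partial_t - \Delta)\Theta$ into a time-derivative piece — producing $\inner{\nabla H}{\n}$ from $(4.2)$ and $-H\inner{\nablabar_\nu\n}{\nu}$ from the moving footpoint — and a Laplacian piece handled via Weingarten, Codazzi, and the constant-curvature simplifications is the standard route, and the $\inner{\nabla H}{\n}$ term does indeed cancel against the corresponding contribution from $\Delta\nu$ in a space form, while $\overline{\mathrm{Ric}}(\nu,\nu)=-2$ supplies the $-2\Theta$. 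The remaining first-order terms in $\nablabar\n$ are precisely what the authors package as $\n(H_\n)$, as you note in your closing paragraph.

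Two small remarks on the write-up rather than the mathematics. First, there is a slight internal tension: you first say the $\nabla H$ cross terms "cancel against the time-derivative contribution" and then say $\n(H_\n)$ "absorbs the leftover first-order terms"; the cleaner formulation is that the tangential $\nabla H$ pieces cancel outright, and it is the remaining terms coming from $\nablabar\n$ (the non-parallelism of the foliation normal) that are collected into $\n(H_\n)$. Second, calling $\n$ a "Jacobi-type field" is a terminological slip: $\n$ is the unit tangent field of the normal geodesic congruence to $S$ (so $\nablabar_\n\n = 0$), and its tangential covariant derivatives are controlled by the shape operators of the foliation fibers as encoded in $(2.1)$--$(2.2)$, but it is not a Jacobi field of that congruence. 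Neither point affects the correctness of your plan.
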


\subsection{Positivity of $\Theta(\cdot,t)$}
In this subsection, we establish the fact that {\es}s $S(t)$ remain as the graphs over $S$, provided the initial
surface $S_0$ is a graph over $S$. This step will be important in the proof of
Theorem 4.5 where we show all evolving surfaces stay in a compact region along the flow.
\begin{lem} \label{bd4gf}
If $\Theta(\cdot,0)\geq{}c_0>0$, where $c_0$ is any positive constant only depending on the initial surface,
then $\Theta (\cdot, t)>0$ for $t \in [0,T)$.
\end{lem}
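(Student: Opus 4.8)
The plan is to prove positivity of $\Theta(\cdot,t)$ by a maximum-principle argument applied to the evolution equation \eqref{eq:evolution of gradient}. The key observation is that along the flow all relevant quantities — $|A|^2$, $H$, and the ambient geometry terms $\n(H_{\n})$ and $\inner{\nablabar_{\nu}\n}{\nu}$ — remain bounded on any finite time interval $[0,T']\subset[0,T)$, simply because the solution exists and is smooth there (so $|A|_{\max}(t)$ is finite by Theorem 2.5, and the reference surface $S$ is fixed with $|\lambda_j(S)|<1$). Hence the coefficient of $\Theta$ in the parabolic operator, call it $q(x,t) = |A|^2 - 2 - H\inner{\nablabar_{\nu}\n}{\nu}\,\Theta^{-1}$ — actually better not to divide — is controlled; I want to rewrite \eqref{eq:evolution of gradient} in a form where the zeroth-order term is a bounded coefficient times $\Theta$ plus a term that is nonnegative (or sign-controlled) when $\Theta$ is small.

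First I would work out, using $g(x)=e^{2v(x)}I$ on $S$ and the explicit form of the equidistant foliation in Lemma 2.4, the geometry of the function $\ell(p) = \pm\dist(p,S)$: its level sets are the parallel surfaces $S(r)$, its gradient $\nablabar\ell = \n$ (extended to be the unit normal of the foliation), and the Hessian of $\ell$ is given by the shape operator of $S(r)$, whose eigenvalues are the $\mu_j(x,r)$ of \eqref{pc4ef}. From this one reads off that $\inner{\nablabar_\nu\n}{\nu}$ is a smooth bounded function of position, and that $\n(H_{\n})$ — the directional derivative of the foliation's mean curvature in the $\n$ direction — is likewise smooth and bounded on the compact region swept out. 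This identifies the precise form of the lower-order terms and shows $\left(\ppl{}{t}-\Delta\right)\Theta \ge -C\,\Theta$ on $[0,T']$ for some constant $C=C(T')$, at least at points where $\Theta$ is small — the term $\n(H_\n)$ needs care since it is not obviously a multiple of $\Theta$.

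The cleanest route, and the one I would actually pursue, is to avoid controlling $\n(H_\n)$ directly: instead apply the maximum principle to $\Theta$ together with the height function $u$, or more simply to observe that $\Theta = \inner{\nu}{\nablabar\ell}$ where $\nablabar\ell$ is a fixed smooth vector field on the compact region, and differentiate this geometric identity directly along the flow using \eqref{eq:gradient function}, $\ppl{}{t}\nu = \nabla H$, and $\ppl{}{t}u = -H\Theta$. This should yield \eqref{eq:evolution of gradient} with the zeroth-order term appearing manifestly as $(\text{bounded})\cdot\Theta$, because the ambient field $\n$ varies smoothly and the point $F(x,t)$ moves with bounded speed inside the compact region. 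Then set $\varphi = e^{Ct}\Theta$; one computes $\left(\ppl{}{t}-\Delta\right)\varphi \ge 0$ on $[0,T']$, so by the parabolic maximum principle $\varphi$ attains its minimum at $t=0$, giving $\Theta(\cdot,t) \ge e^{-Ct}c_0 > 0$ on $[0,T']$. Since $T'<T$ was arbitrary, $\Theta(\cdot,t)>0$ for all $t\in[0,T)$.

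The main obstacle is making the zeroth-order coefficient in \eqref{eq:evolution of gradient} genuinely of the form (bounded function)$\times\Theta$ — i.e.\ showing the inhomogeneous-looking term $\n(H_\n)$ is either absorbed into a $\Theta$-multiple or has the favorable sign when $\Theta$ is near zero. This is where one must use that $S(t)$ stays inside a fixed compact slab of $M^3$ (which is itself part of Theorem 4.5, so there is a mild circularity to untangle: positivity of $\Theta$ is used to prove the height bound, and a priori one only has compactness on $[0,T')$ from smoothness of the solution, which is enough). Once the coefficient is controlled on each $[0,T']$, the rest is the standard scalar parabolic maximum principle on a closed surface, where no boundary terms arise.
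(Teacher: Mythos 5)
Your overall strategy—apply the parabolic maximum principle to $\Theta$ via its evolution equation \eqref{eq:evolution of gradient}—is the same as the paper's, and your conclusion via $\varphi=e^{Ct}\Theta$ (equivalently, a Gronwall inequality for $\Theta_{\min}$) is exactly what the paper does. But you have identified the crux and then left it unresolved: you write that $\n(H_\n)$ ``is not obviously a multiple of $\Theta$'' and call absorbing it into a $\Theta$-multiple ``the main obstacle,'' yet your proposed route (re-deriving the evolution equation from $\Theta=\inner{\nu}{\nablabar\ell}$ and observing smooth ambient dependence) does not actually produce the needed factor of $\Theta$. Smoothness of $\n$ on a compact region gives a bound $|\n(H_\n)|\le C$, which does not close the argument; if the inhomogeneous terms were merely bounded (and not of order $\Theta$), then $\Theta$ could cross zero in finite time. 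What the paper uses, and what you are missing, are the specific pointwise estimates from \cite{Bar84} and \cite{Eck03}:
\begin{equation*}
|\n(H_{\n})|\leq{}c_{1}(\Theta^{3}+\Theta^{2}|A|)\ ,\qquad
|\inner{\nablabar_{\nu}\n}{\nu}|\leq{}c_{2}\Theta^{2}\ ,
\end{equation*}
which show that both problematic terms vanish to order $\Theta^2$. These are genuine computational facts about graph quantities in the equidistant foliation (not consequences of mere smoothness), and citing or re-deriving them is the missing step in your proof.

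A secondary difference: you propose to control the $|A|$-dependence by using smoothness on $[0,T']$, giving a $T'$-dependent constant. The paper avoids any reliance on an $|A|$ bound. After substituting the estimates above and using $\Theta\le 1$, $|H|\le\sqrt{2}|A|$, the zeroth-order coefficient is $|A|^{2}-(c_{1}+c_{2}\sqrt{2})|A|-(2+c_{1})$, a quadratic in $|A|$ with positive leading term and therefore bounded \emph{below} by a constant independent of $|A|$ and of $T$. This yields $\Theta_{\min}(t)\ge c_0\,e^{-Ct}$ with $C$ depending only on $c_1,c_2$ (hence only on the reference surface $S$), which is both cleaner and strictly stronger than a $T'$-localized bound, and sidesteps the circularity with Theorem \ref{bd4hf} that you flag at the end.
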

\begin{proof}Let
\begin{equation*}
   \Theta_{\min}(t)=\min_{x\in{}S}\Theta(x,t)\ .
\end{equation*}
We estimate the terms in the evolution equation $(4.9)$ of $\Theta$, starting with the expression
$\n(H_{\n})$, from (\cite[Eq. (2.10)]{Bar84}):
\begin{equation*}
   |\n(H_{\n})|\leq{}c_{1}(\Theta^{3}+\Theta^{2}|A|)\ ,
\end{equation*}
for some $c_1 > 0$.

We also have the following estimate from (\cite[Page 187]{Eck03})
\begin{equation*}
   |\inner{\nablabar_{\nu}\n}{\nu}|\leq{}c_{2}\Theta^{2}\ ,
\end{equation*}
where $c_{2}=\|\nablabar\n\| > 0$.

Collecting these estimates, and we derive from the equation $(4.9)$:
\begin{eqnarray*}
   \ddl{}{t} \Theta_{\min} &\ge& (|A|^{2}-2)\Theta_{\min}-
             c_{1}(\Theta_{\min}^{3}+|A|\Theta_{\min}^{2})-c_{2}|H|\Theta_{\min}^{2}\\
      &\ge&\big((|A|^{2}-2)-c_{1}(1+|A|)-c_{2}\sqrt{2}|A|)\big)\Theta_{\min}\\
         &=&\big(|A|^{2}-(c_{1}+c_{2}\sqrt{2})|A|-(2+c_{1})\big)\Theta_{\min}
\end{eqnarray*}
Since $\Theta_{\min}(0)\geq{}c_0>0$, then above forces $\Theta_{\min} \ge 0$ and hence
\begin{equation*}
 \ddl{}{t} \Theta_{\min} \ge -\left(\frac{(c_{1}+c_{2}\sqrt{2})^{2}}{4}+2+c_{1} \right)\Theta_{\min}.
  \end{equation*}
\begin{equation*}
   \Theta_{\min}(t)\geq{}c_{0}\exp(-(\frac{(c_{1}+c_{2}\sqrt{2})^{2}}{4}+2+c_{1})t) > 0
   \quad\text{on}\ [0,T)\ ,
\end{equation*}
completing the proof.
\end{proof}

\subsection{Uniform bounds on the {\hf}}
In this subsection, we show that the evolving surfaces stay in a compact region in $M^3$ throughout the flow.
This is established via an estimate on the {\hf} $u(x,t)$:
\begin{theorem}\label{bd4hf}
Suppose the mean curvature flow $(1.1)$ has a solution on $[0,T)$, $0<T\leq\infty$, then the {\hf}
$u(\cdot,t)$ (as defined in (4.6)) is uniformly bounded for $t \in [0,T)$.
\end{theorem}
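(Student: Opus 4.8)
The plan is to establish a two-sided bound on the height function $u(x,t) = \ell(F(x,t))$ by trapping each evolving surface $S(t)$ between two leaves $\Sigma(\pm r)$ of the equidistant foliation of $M^3$ coming from the minimal surface $\Sigma$, for a fixed $r$ large enough to contain $S_0$. The key mechanism is the avoidance principle for mean curvature flow: two disjoint solutions of $(1.1)$ stay disjoint. Here one of the ``solutions'' will be a \emph{stationary} barrier built from the foliation leaves — not a solution of $(1.1)$ itself, but a supersolution/subsolution in the appropriate sense because of the sign of the mean curvature of $\Sigma(\pm r)$ established in Proposition 3.3 and (2.3).

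First I would fix $R>0$ large enough that the initial surface $S_0$ lies in the compact region $M(R)$ bounded by $\Sigma(-R)$ and $\Sigma(R)$; this is possible since $S_0$ is a fixed smooth closed surface. Next I would set up the comparison on the ``upper'' side: I claim $S(t)$ never crosses $\Sigma(R)$. Suppose otherwise, and let $t_0$ be the first time the flow touches $\Sigma(R)$; at a first contact point the two surfaces are tangent and $S(t_0)$ lies on the non-convex side of $\Sigma(R)$. By Proposition 3.3 (with $r=R>0$) the leaf $\Sigma(R)$ has mean curvature $H(\Sigma(R)) \in (0,2)$ with the normal pointing away from $\Sigma$, i.e. it is mean-convex in the outward direction; comparing with Hopf's maximum principle (Lemma 2.1) and the flow equation $\partial_t F = -H\nu$ at the contact point yields a contradiction with $t_0$ being the first contact time — the flow is pushed strictly back into $M(R)$. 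Symmetrically, using $\Sigma(-R)$ and the sign of its mean curvature, $S(t)$ never crosses $\Sigma(-R)$. Hence $F(x,t) \in M(R)$ for all $t\in[0,T)$, which is exactly the statement $-R \le u(x,t) \le R$, a uniform bound.

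To make the first-contact argument rigorous one must handle tangency carefully: I would use the strong maximum principle for the scalar function measuring signed distance from $S(t)$ to the leaf $\Sigma(R)$, restricted to $S(t)$, using the evolution equation $(4.7)$ for $u$ together with $(4.8)$ for $\Theta$, or equivalently phrase it as the standard comparison principle for graphical mean curvature flow (Lemma \ref{bd4gf} guarantees $S(t)$ stays a graph, so this is legitimate and the leaves $\Sigma(r)$ are graphs over $\Sigma$ with controlled mean curvature). The main obstacle is precisely this step — ensuring that the foliation leaves genuinely act as barriers. The subtlety is that $\Sigma(R)$ is not itself a solution of $(1.1)$, so one needs the one-sided inequality: moving $\Sigma(R)$ by its own mean curvature vector would push it \emph{outward} (away from $\Sigma$), hence it is a strict ``outer barrier'' that the inward-tending flow $S(t)$ cannot penetrate. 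Once the barrier property is in hand, the conclusion is immediate and uniform in $t$, independent of whether $T$ is finite or infinite, which is exactly what is needed to later extend the solution past any finite maximal time.
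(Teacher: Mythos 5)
Your proposal is correct and follows essentially the same route as the paper: trap the evolving surfaces between leaves of the equidistant foliation, and apply Hopf's maximum principle (Lemma 2.1) at the first-contact / height-extremum point, where $\Theta=1$ forces $H(S(t))\le 0$, contradicting the positive mean curvature of the barrier leaf. The only minor divergence is that the paper works with the $S$-foliation and invokes Proposition 3.4 (convexity of leaves beyond $r_0$) so the argument covers a general reference surface $S$, whereas your appeal to $(2.3)$ for positivity of $H(\Sigma(r))$ at all $r>0$ is specific to the case $S=\Sigma$.
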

Proposition 3.4 plays a very important role in this key theorem. We use the hyperbolic properties of {\ef}, and
Hopf's {\maxp} to bound {\es}s of the {\mcf}, i.e., hyperbolic geometry provides barrier surfaces for the {\mcf}.
\begin{proof} At each time $t\in[0,T)$, let $x(t)\in{}S(t)$ be the point such that
\begin{equation*}
   u_{\max}(t)\equiv\max_{x\in{}S(t)}u(x,t)=u(x(t),t)\ ,
\end{equation*}
and let $y(t)\in{}S(t)$ be the point such that
\begin{equation*}
   u_{\min}(t)\equiv\min_{y\in{}S(t)}u(y,t)=u(y(t),t)\ .
\end{equation*}
By the {\ee} $(4.9)$ and the positivity of $\Theta$ along the flow (the Lemma \eqref{bd4gf}), we find the part
of $S(t)$ with $H < 0$ will move along the positive direction of $\n$ while the part of $S(t)$ with $H > 0$ will
move along the negative direction of $\n$, therefore we can assume that $u_{\max}(t)$ is increasing and
$u_{\min}(t)$ is decreasing, for $t \ge t_0$, where $t_0>0$.

Our strategy is now clear: in the positive direction, at the furtherest point on $S(t)$, the {\mc} is negative. We
then apply the Proposition 3.4, that $M^3$ admits an {\ef} such that for far enough (at least $r_0$ from the
reference surface), all fiber surfaces have positive {\mc}s. Fiber surfaces at $r_0$ and $-r_0$ then serve as
barrier surfaces for the {\mcf} $(1.1)$ by Hopf's {\maxp}.

We now follow the strategy: Since $M^3$ is {\af}, the parallel surfaces from the {\ms} $\Sigma$ form the {\ef},
$\{\Sigma(r)\}_{r \in \R}$, of $M^3$. Therefore, there exist $r_1 < r_2$ such that the surface $\Sigma(r_1)$ is
tangent to $S(t)$ at the point $y(t)$, and the surface $\Sigma(r_2)$ is tangent to $S(t)$ at the point $x(t)$. It
is easy to see that $u_{\max}(t) = r_2$ and $u_{\min}(t) = r_1$.

Let us assume the reference surface is the {\ms} $\Sigma$ to apply some basic properties of {\pc}s for {\ef}
$\{\Sigma(r)\}_{r \in \R}$, as in Proposition 3.4: from the formula $(2.2)$, the {\pc}s of any point on $\Sigma(r)$ are
determined by the {\pc}s of the corresponding point on $\Sigma$ and $r$. In particular, if $\mu_1(x,r) < \mu_2(x,r)$
are {\pc}s of $(x,r) \in \Sigma(r)$, where $x \in \Sigma$, then we have, as in $(2.2)$:
\begin{equation*}
\mu_{1}(x,r) = \frac{\tanh(r)-\lambda(x)}{1-\lambda(x)\tanh(r)},
\mu_{2}(x,r) = \frac{\tanh(r)+\lambda(x)}{1+\lambda(x)\tanh(r)},
\end{equation*} where $\pm\lambda(x)$ are the {\pc}s on the {\ms} $\Sigma$.

It is routine to verify that for fixed $x$, both $\mu_1(\cdot,r)$ and $\mu_2(\cdot,r)$ are increasing function of $r$,
since $|\lambda(x)| <1$. Let $\lambda_0$ be the maximum of the {\pc}s on the {\ms} $\Sigma$, then
$0 < \lambda_0 < 1$.

Denote the constant $r_0 = {\frac{1}{2}}\log{\frac{1+\lambda_0}{1-\lambda_0}}$. The Proposition 2.4 says that,
for any $r>r_0$, we have $\mu_1(x,r) > 0$ for all $x \in \Sigma$, hence all {\pc}s of the parallel surface
$\Sigma(r)$ are positive. Similarly, for any $-r < -r_0$, all {\pc}s of the parallel surface $\Sigma(-r)$ are negative.

An easy modification to treat the general case when the reference surface $S\not= \Sigma$: since
$|\lambda_{j}(S)| < 1$, we foliate the {\af} manifold $M^3$ by the {\ef} $\{S(r)\}_{r \in \R}$. Take now
$\lambda_0 = \max\{|\lambda(S)|\}$ and $r_0 = {\frac{1}{2}}\log{\frac{1+\lambda_0}{1-\lambda_0}}$. Then the
{\pc}s of $S(r)$ take forms of
\begin{equation*}
      \mu_{j}(x,r) = \frac{\tanh(r)+\lambda_j(x)}{1+\lambda_j(x)\tanh(r)},
\end{equation*}
for $x \in S$ and $j =1,2$. Therefore again we find $-r_0 \le r_1 < r_2 \le r_0$.

We consider, at $F(x(t),t)$, $\Theta=\inner{\n}{\nu}=1$, then $0\leq\ppl{u}{t}=-H$, at $F(x(t),t)$. Therefore the {\mc}
of $S(t)$ at the point $x(t)$ is non-positive. Since $\Sigma(r_2)$ is tangent to $S(t)$ at the positive side of $S(t)$,
by Hopf's {\maxp} (the Lemma 2.1), the {\mc} of $\Sigma(r_2)$ at $x(t)$ is no greater than that of the {\mc} of $S(t)$
at the intersection point $x(t)$. Therefore, there exists at least one point on the surface $\Sigma(r_2)$ with
non-positive {\pc}.So the {\es}s are uniformly bounded by two surfaces $S(-r_0)$ and $S(r_0)$, for all $t \in [t_0, T)$.

Combining with the bounds for $t \in [0,t_0]$, the {\hf} is bounded in a compact region only depending on the reference
surface $S$ and initial surface $S_0$.
\end{proof}
As a corollary, since all evolving surfaces are staying within a compact region, we find:
\begin{cor}
There is a constant $c_3 > 0$, only depending on the initial surface $S_0$ and the {\ms} $\Sigma$, such that
$c_3 \le \Theta(\cdot,t) \le 1$, for all $t \in [0,T)$.
\end{cor}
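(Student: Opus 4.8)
The bound $\Theta(\cdot,t)\le 1$ is immediate from the Cauchy--Schwarz inequality, since $\n$ and $\nu$ are unit vectors. For the uniform lower bound, the plan is to run an Ecker--Huisken type maximum principle on the quantity
\[
   \varphi := \Theta^{-1}e^{\alpha u},
\]
where $u$ is the \hf{} on $S(t)$ and $\alpha>0$ is a large constant to be fixed. The point of Theorem \ref{bd4hf} is exactly that all \es s stay inside the fixed compact region $M(r_{0})$, so $|u|\le r_{0}$ throughout; hence $e^{\alpha u}$ is a bounded, harmless weight, and it is the ``barrier'' that will make the argument close. The task is then to show that $\varphi$ is a subsolution of $\left(\ppl{}{t}-\Delta\right)$ at every point where $v:=1/\Theta$ is large.

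First I would assemble the two evolution equations involved. From $(4.8)$ one has $\ppl{u}{t}=-H\Theta$, while the standard computation via the Gauss formula gives $\Delta u=\mathrm{tr}_{S(t)}\nablabar^{2}\ell-H\Theta$, so the two $H\Theta$ contributions cancel and
\[
   \left(\ppl{}{t}-\Delta\right)u=-\mathrm{tr}_{S(t)}\nablabar^{2}\ell .
\]
Since $\ell$ is the signed distance to $\Sigma$, its ambient Hessian on $M(r_{0})$ is governed by the \pc s of the \ef{} $\{\Sigma(r)\}$, which by $(2.2)$ and the identity $\tanh r_{0}=\lambda_{0}$ satisfy $|\mu_{j}(x,r)|\le 2\lambda_{0}/(1-\lambda_{0}^{2})$ for $|r|\le r_{0}$; hence $|(\ppl{}{t}-\Delta)u|\le C_{\lambda}$ for a constant $C_{\lambda}$ depending only on $\lambda_{0}$. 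Substituting $v=1/\Theta$ into $(4.9)$ gives
\[
   \left(\ppl{}{t}-\Delta\right)v=-(|A|^{2}-2)v-\frac{2}{v}|\nabla v|^{2}-v^{2}\bigl(\n(H_{\n})-H\inner{\nablabar_{\nu}\n}{\nu}\bigr),
\]
and the crucial observation is that the last term is harmless: by the estimates $|\n(H_{\n})|\le c_{1}(\Theta^{3}+\Theta^{2}|A|)$ and $|\inner{\nablabar_{\nu}\n}{\nu}|\le c_{2}\Theta^{2}$ from the proof of Lemma \ref{bd4gf}, together with $|H|\le\sqrt{2}\,|A|$, it is at most $c_{1}+C|A|$ in absolute value, and since $v\ge 1$ the bad factor is absorbed via $-|A|^{2}v+C|A|\le C^{2}/4$. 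I would also record the elementary identity $|\nabla u|^{2}=1-\Theta^{2}$.

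Then I would compute $\left(\ppl{}{t}-\Delta\right)\varphi$ for $\varphi=v\,e^{\alpha u}$: discarding the good negative terms $-|A|^{2}v$ and $-\tfrac{2}{v}|\nabla v|^{2}$, and absorbing the cross term $-2\alpha e^{\alpha u}\inner{\nabla v}{\nabla u}$ by Young's inequality (with weights chosen so the $\tfrac{2}{v}|\nabla v|^{2}$ is cancelled and a term $-\tfrac{\alpha^{2}}{2}v|\nabla u|^{2}$ survives), one is left with an inequality of the form
\[
   \left(\ppl{}{t}-\Delta\right)\varphi\le e^{\alpha u}\Bigl[\bigl(2+\alpha C_{\lambda}-\tfrac{\alpha^{2}}{2}\bigr)v+\tfrac{\alpha^{2}}{2}v^{-1}+c\Bigr].
\]
Choosing $\alpha$ large enough, depending only on $C_{\lambda}$, that the coefficient of $v$ is $\le-1$ makes $\varphi$ a strict subsolution wherever $v>M_{0}$ for an explicit $M_{0}=M_{0}(\alpha,c)$. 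Since $S$ is closed, the parabolic maximum principle on $S\times[0,T)$ then yields $\sup_{S\times[0,T)}\varphi\le\max\{\varphi(\cdot,0),\,M_{0}e^{\alpha r_{0}}\}$; combined with $|u|\le r_{0}$ and $\Theta(\cdot,0)\ge c_{0}$ this bounds $v$ by a constant that depends only on $c_{0}$ (hence on $S_{0}$), on $\lambda_{0}$, and on the ambient geometry of $M^{3}$ near $\Sigma$ --- that is, only on $S_{0}$ and $\Sigma$. This is the asserted $\Theta\ge c_{3}>0$.

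The step I expect to be the main obstacle is not any isolated estimate but the fact that at this stage there is no a priori bound on $|A|$ --- the uniform bound on the \sff{} is established only later, and will itself rely on this corollary --- so the curvature terms in the evolution of $\Theta$ cannot simply be bounded away. What rescues the argument is structural: every dangerous occurrence of $|A|$ appears with a compensating power of $\Theta$, so after passing to $v=1/\Theta$ it is only of order $|A|$ and is dominated by the genuinely favorable term $-|A|^{2}v$; this, together with the cancellation in the evolution of $u$ and the height barrier provided by Theorem \ref{bd4hf}, is exactly what lets the maximum principle close.
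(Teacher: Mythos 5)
Your proposal is correct, and in fact it supplies the argument that the paper's text glosses over. The corollary is introduced by the single line ``As a corollary, since all evolving surfaces are staying within a compact region, we find,'' with no proof environment following. But a $C^{0}$ bound on the height alone does not force $\Theta$ away from zero: the exponential lower bound of Lemma \ref{bd4gf} decays in $t$, and a graph confined to $M(r_{0})$ could in principle become arbitrarily steep. The missing ingredient is exactly an Ecker--Huisken weighted gradient estimate, and your quantity $\Theta^{-1}e^{\alpha u}$ with the height barrier $|u|\le r_{0}$ from Theorem \ref{bd4hf} is the standard and correct way to close it. Your computation checks out: $(\partial_{t}-\Delta)u=-\mathrm{tr}_{S(t)}\nablabar^{2}\ell$ is bounded on $M(r_{0})$; Young's inequality on the cross term cancels $-\tfrac{2}{v}|\nabla v|^{2}$ and leaves $-\tfrac{\alpha^{2}}{2}v|\nabla u|^{2}$; and --- the genuine subtlety you flag --- the $|A|$-dependent pieces of $v^{2}J$ carry exactly two compensating powers of $\Theta$, so after multiplying by $v^{2}$ they are merely of order $|A|$ and are dominated by $-|A|^{2}v$ \emph{before} any a priori curvature bound is available. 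The resulting constant depends on $c_{0}$, on $\lambda_{0}$ (through $r_{0}$ and $C_{\lambda}$), and on $\|\nablabar\n\|$ over $M(r_{0})$, i.e.\ only on $S_{0}$ and $\Sigma$, as asserted. So this is not merely a different phrasing: it is the proof the corollary actually needs, and it makes explicit why Theorem \ref{bd4a} can later cite this corollary without circularity.
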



\section{Proof of Theorem 1.1: long time solution}
We conclude the proof of the Theorem 1.1 in the section. To prevent singularity occurs in finite time, we need to
derive uniform bounds for the {\sff}s of the {\es}s. We use the uniqueness of the {\ms} in an {\af} manifold to show
the {\mcf} equation $(1.1)$ converges to a unique limiting surface.


\subsection{Uniform bounds on $|A|^2$ and $H^2$}
A crucial part of proving long time existence of the {\mcf} equation $(1.1)$ is to establish a priori bounds for the
{\sff}s on $S(t)$. In this subsection, we obtain such a bound. As a corollary, we obtain a uniform upper bound for
the square of the {\mc}, $H^2(\cdot,t)$.

\begin{theorem} \label{bd4a}
Suppose the {\mcf} $(1.1)$ has a solution for $t \in [0,T)$, then there is a constant $c_4 > 0$, only depending
on $S_0$, such that $|A|^{2}(\cdot, t) \le c_4 < +\infty$, for all $t \in [0, T)$.
\end{theorem}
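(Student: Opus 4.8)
The plan is a parabolic maximum‑principle estimate for the quantity
\begin{equation*}
   \varphi:=\frac{|A|^{2}}{\Theta^{2}}\ ,\qquad \Theta=\inner{\nu}{\n}\ ,
\end{equation*}
on the {\es}s $S(t)$, in the spirit of Ecker--Huisken. Two facts from \S4 do the preparatory work. By Corollary 4.6 the {\gradf} is pinched, $c_{3}\le\Theta\le1$ on $[0,T)$, so $\varphi$ is comparable to $|A|^{2}$ --- $|A|^{2}\le\varphi\le c_{3}^{-2}|A|^{2}$ --- and it suffices to bound $\varphi$. By Theorem 4.5 every $S(t)$ lies in the fixed compact region $M(r_{0})\subset M^{3}$, so the ambient curvature of $M^{3}$, $\nablabar\n$, and the {\sff}s of the leaves of the {\ef} of the reference surface are all bounded by constants depending only on $S_{0}$.

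Next I would apply $\ppl{}{t}-\Delta$ to $\varphi$, using the evolution equations $(4.5)$ for $|A|^{2}$ and $(4.9)$ for $\Theta$, together with $(\ppl{}{t}-\Delta)\Theta^{-2}=-2\Theta^{-3}(\ppl{}{t}-\Delta)\Theta-6\Theta^{-4}|\nabla\Theta|^{2}$. The point of this choice of $\varphi$ is a cancellation: the dangerous quartic term $+2|A|^{4}\Theta^{-2}$ produced by $(\ppl{}{t}-\Delta)|A|^{2}$ is matched exactly by the $-2|A|^{4}\Theta^{-2}$ coming from the factor $(|A|^{2}-2)\Theta$ in $(4.9)$. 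After this cancellation the surviving terms are: two good (negative) ones, $-2\Theta^{-2}|\nabla A|^{2}$ and $-6|A|^{2}\Theta^{-4}|\nabla\Theta|^{2}$; a term bounded above by $8|A|^{2}\Theta^{-2}$; a cross term $4\Theta^{-3}\,\nabla|A|^{2}\cdot\nabla\Theta$; and the contribution of the error terms $\n(H_{\n})-H\inner{\nablabar_{\nu}\n}{\nu}$ of $(4.9)$, which --- by the estimates used in the proof of Lemma 4.4, and using that the flow stays in $M(r_{0})$ --- is of size $\lesssim|A|^{2}(1+|A|)$.

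The core step is the absorption of the cross term. At an interior spatial maximum $x_{0}$ of $\varphi$ one has $\nabla\varphi=0$, hence $\nabla|A|^{2}=2|A|^{2}\Theta^{-1}\nabla\Theta$ there, so $\bigl|\nabla|A|\bigr|^{2}=\bigl|\nabla|A|^{2}\bigr|^{2}/(4|A|^{2})=|A|^{2}\Theta^{-2}|\nabla\Theta|^{2}$ at $x_{0}$; inserting this --- via the Kato inequality $|\nabla A|^{2}\ge\bigl|\nabla|A|\bigr|^{2}$ --- into the term $-2\Theta^{-2}|\nabla A|^{2}$ shows that at $x_{0}$ the three first‑order terms $-2\Theta^{-2}|\nabla A|^{2}$, $-6|A|^{2}\Theta^{-4}|\nabla\Theta|^{2}$ and $4\Theta^{-3}\nabla|A|^{2}\cdot\nabla\Theta$ are bounded above by $-2$, $-6$ and $+8$ times $|A|^{2}\Theta^{-4}|\nabla\Theta|^{2}$ respectively, hence sum to at most $0$ --- and it is precisely this balance that forces the exponent $2$ in $\Theta^{-2}$. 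Running this through Hamilton's trick for $\varphi_{\max}(t)=\max_{S(t)}\varphi$ and using the compact‑region bounds, I expect a differential inequality $\ddl{}{t}\varphi_{\max}\le c_{5}\bigl(1+\varphi_{\max}+\varphi_{\max}^{3/2}\bigr)$, and, once the cubic term is dealt with (see below), $\ddl{}{t}\varphi_{\max}\le c_{5}+c_{6}\varphi_{\max}$; in either case $\varphi$ stays finite on every compact subinterval of $[0,T)$, so by Huisken's Theorem 2.3 the maximal time must be $T=\infty$, and the $T$‑independent bound $|A|^{2}\le c_{4}$ claimed then follows either from a dissipative refinement of the inequality or, once $T=\infty$ is known, from the convergence $S(t)\to\Sigma$ established in \S5. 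The corollary $H^{2}(\cdot,t)\le2c_{4}$ is immediate from $H^{2}\le2|A|^{2}$.

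The genuine obstacle, I expect, is this cubic term $\lesssim|A|^{3}\sim\varphi^{3/2}$: left unattended it would permit the comparison ODE to blow up in finite time. It originates in the linear‑in‑$|A|$ parts of $\n(H_{\n})$ and of $H\inner{\nablabar_{\nu}\n}{\nu}$ in $(4.9)$. Controlling it is exactly where the {\af} hypothesis enters: because $S(t)$ remains a graph over the reference surface with $\Theta\ge c_{3}$ (Lemma 4.4) and stays inside $M(r_{0})$, the flow --- rewritten for the graph function over $S$ --- is uniformly parabolic with controlled coefficients, and the leaves of the {\ef}, whose {\pc}s lie in $(-2,2)$ by Lemma 2.2, provide the barrier surfaces and Riccati identities needed to bound $\n(H_{\n})$ by the ambient data alone; this is the precise sense in which ``graph functions behave regularly'' in hyperbolic ambient manifolds (cf. \cite{EH89}, \cite{Unt03}). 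With that in place, the rest is the routine sign bookkeeping sketched above.
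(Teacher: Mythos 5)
The paper's own proof works with $f=|A|^{2}\eta^{4}=|A|^{2}\Theta^{-4}$, \emph{not} with $\varphi=|A|^{2}\Theta^{-2}$. That choice of exponent is the crux of the argument, and it is exactly where your version breaks. Using $(4.5)$ and $(4.9)$, the coefficient of the quartic term in $L(|A|^{2}\Theta^{-p})$ is $(2-p)|A|^{4}\Theta^{-p}$: with $p=4$ the paper retains a \emph{strictly negative} term $-2|A|^{4}\Theta^{-4}=-2\Theta^{4}f^{2}\le -2c_{3}^{4}f^{2}$, and it is this dominant dissipative term that absorbs the $J$-contribution and all other lower-order terms for large $f$ and yields the contradiction. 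With $p=2$, as you observe, the quartic terms cancel \emph{exactly}, so you are left with a differential inequality of the form $\ddl{}{t}\varphi_{\max}\le c(1+\varphi_{\max}+\varphi_{\max}^{3/2})$ and no negative power of $\varphi$ to beat the $\varphi^{3/2}$ term. You correctly identify this as the genuine obstacle, but the fix you sketch does not close it: the factor $|A|$ in Bartnik's estimate $|\n(H_{\n})|\le c_{1}(\Theta^{3}+\Theta^{2}|A|)$ comes from the second fundamental form of $S(t)$ itself, not from the ambient foliation, so no amount of barrier/Riccati information about $M(r_{0})$ and the leaves $S(r)$ can replace it by a constant. Consequently the cubic term really is there and really is undominated.

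A secondary circularity: your closing remark that the $T$-independent bound can be extracted ``from the convergence $S(t)\to\Sigma$ established in \S 5'' does not work, because the convergence arguments in \S 5 (Theorems 5.3, 5.7 and the exponential rate) all presuppose the uniform bound on $|A|^{2}$ being proved here; you cannot use the endpoint of the chain to supply its first link.

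The elegant cancellation you exploit at the spatial maximum of $\varphi$ --- the identity $\nabla|A|^{2}=2|A|^{2}\Theta^{-1}\nabla\Theta$, Kato's inequality, and the resulting zero sum of the three first-order terms --- is a nice observation, but for $p=2$ it kills the very term you need. The lesson is that one should not aim for an exact balance: one wants the quartic coefficient to come out negative, which forces $p>2$, and the paper takes $p=4$ (writing $\eta=\Theta^{-1}$ and $f=|A|^{2}\eta^{4}$). If you redo your maximum-principle bookkeeping with $p=4$, you will recover a $-2c_{3}^{4}f_{\max}^{2}$ term, and the cubic contribution of $J$ is then harmless; this is the paper's argument.
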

\begin{proof}
The strategy is to add enough negative terms to $(4.5)$. To this end, we derive the evolution equation satisfied by
$\eta(\cdot,t) = {\frac{1}{\Theta}}$. This function is well-defined by the positivity of $\Theta(\cdot,t)$ (Lemma 4.4).
\begin{eqnarray*}
\ppl{}{t}\eta & = & -\eta^2\ppl{}{t}\Theta \\
&=& \Delta\eta - 2\Theta|\nabla\eta|^2 -(|A|^{2}-2)\eta -\eta^2 J,
\end{eqnarray*}
where we denote $J =\n(H_{\n})-H\inner{\nablabar_{\nu}\n}{\nu}$.

We consider the function $f(\cdot,t) = |A|^2 \eta^4$, and we have
\begin{equation}
   \ppl{}{t}f = \eta^4(\Delta|A|^2 - 2|\nabla A|^2 + 2|A|^2(|A|^2+2) - 4H^2) + |A|^2\ppl{}{t}(\eta^4),
\end{equation}
and we compute
\begin{equation*}
   \ppl{}{t}(\eta^2) = 2\eta \ppl{}{t}\eta = 2\eta\Delta\eta -4|\nabla\eta|^2 -2\eta^2(|A|^2-2) -2\eta^3 J,
\end{equation*}
therefore we obtain:
\begin{equation}
   L(\eta^2) = -6|\nabla \eta|^2 - 2\eta^2(|A|^2-2) -2\eta^3 J,
\end{equation}
where we introduce the operator $L =  \ppl{}{t}-\Delta$ to simplify our notation.

Therefore we have
\begin{eqnarray}
\ppl{}{t}(\eta^4) &=& 2\eta^2(\Delta(\eta^2) -6|\nabla \eta|^2 - 2\eta^2(|A|^2-2) -2\eta^3 J) \nonumber \\
&=& \Delta(\eta^4) -20\eta^2|\nabla \eta|^2 - 4\eta^4(|A|^2-2) -4\eta^5 J.
\end{eqnarray}
Applying this into $(5.1)$, we find
\begin{eqnarray*}
   L(f) &=& -2\eta^4|A|^4 -2\eta^4|\nabla A|^2 +4(3|A|^2 -H^2) \eta^4 \\
   &  &- 2\nabla|A|^2 \cdot \nabla(\eta^4) -20|A|^2\eta^2|\nabla \eta|^2 -4|A|^2\eta^5 J,
\end{eqnarray*}
where the dominant term on the right is $-2\eta^4|A|^4 = -2\Theta^4 f^2$, for large $|A|^2$.

Now we assume $|A|^2$ is not uniformly bounded, then $|A|_{\max}^2(\cdot,t) \to \infty$ as $t \to T$.
Since $f(\cdot,t) = |A|^2 \eta^4 \ge |A|^2$, we have $f_{\max}(\cdot,t) \to \infty$ as $t \to T$. There
then exists a $t_1 \in (0,T)$ such that when $t > t_1$, we have
\begin{eqnarray*}
   \ddl{}{t}f_{\max} &\le& -2\eta^4|A|^4 -2\eta^4|\nabla A|^2 + 4(3|A|^2 -H^2) \eta^4 \\
   &  & - 2\nabla|A|^2 \cdot \nabla(\eta^4) -20|A|^2\eta^2|\nabla \eta|^2 -4|A|^2\eta^5 J.
\end{eqnarray*}
From Corollary 4.6, we have $-2\eta^4|A|^4 = -2\Theta^4 f^2 \le -2c_3^4 f^2$.

From the proof of Lemma 4.4, we estimate the term $J$:
\begin{equation*}
|J| = |\n(H_{\n})-H\inner{\nablabar_{\nu}\n}{\nu}| \le c_1\Theta^3+(c_1|A|+c_2|H|)\Theta^2.
\end{equation*}
Since $c_3 \le \Theta < 1$, for $t>t_1$, now we have
\begin{equation*}
   \ddl{}{t}f_{\max} \le -2c_3^4 f_{\max}^2 + lower \ order \ terms.
\end{equation*}
This is a contradiction since $\ddl{}{t}f_{\max} >0$. Therefore $f(\cdot,t) = |A|^2 \eta^4$ is uniformly
bounded, which in turn bounds $|A|^2$ from above.
\end{proof}
As a corollary, we obtain the uniform bound for $H^{2}(\cdot, t)$:
\begin{cor}
Let $c_4 > 0$ be the constant as the upper bound of $|A|^2$ in Theorem 4.1, then
$sup_{S(t)}(H^2(\cdot, t)) \le 2c_4$.
\end{cor}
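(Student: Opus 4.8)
The plan is to obtain the bound on $H^2$ pointwise, as a purely algebraic consequence of the definitions of $H$ and $|A|^2$, and then feed in the uniform estimate on $|A|^2$ just proved in Theorem \ref{bd4a}. Fix $t\in[0,T)$ and a point of $S(t)$, and let $\lambda_1,\lambda_2$ be the principal curvatures there, so that $H=\lambda_1+\lambda_2$ and $|A|^2=\lambda_1^2+\lambda_2^2$.

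First I would record the elementary inequality $2\lambda_1\lambda_2\le\lambda_1^2+\lambda_2^2$, i.e. $(\lambda_1-\lambda_2)^2\ge0$ (equivalently, the Cauchy--Schwarz inequality for the vectors $(\lambda_1,\lambda_2)$ and $(1,1)$). This gives $H^2=(\lambda_1+\lambda_2)^2=\lambda_1^2+\lambda_2^2+2\lambda_1\lambda_2\le 2(\lambda_1^2+\lambda_2^2)=2|A|^2$ at every point of every evolving surface $S(t)$. Then I would simply apply Theorem \ref{bd4a}, which provides $|A|^2(\cdot,t)\le c_4$ for all $t\in[0,T)$, to conclude $\sup_{S(t)}H^2(\cdot,t)\le 2c_4$ on the whole existence interval.

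There is no genuine obstacle here: the argument is a one-line pointwise estimate combined with the already-established curvature bound. The only point worth noting is that the constant $2$ is sharp, with equality in $H^2\le 2|A|^2$ precisely at umbilic points where $\lambda_1=\lambda_2$; since we make no assumption ruling out umbilic points, this factor cannot be improved, nor do we need it to be.
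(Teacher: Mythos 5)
Your argument is correct and is the same as the paper's: the paper simply cites the pointwise inequality $|A|^2 - \tfrac{1}{2}H^2 \ge 0$ (equivalently $H^2 \le 2|A|^2$) and applies the bound from Theorem~\ref{bd4a}. You have merely unpacked that inequality in terms of the principal curvatures $\lambda_1,\lambda_2$ via $(\lambda_1-\lambda_2)^2\ge0$, which is fine.
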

\begin{proof}
This immediately follows from $|A|^2 - {\frac{1}{2}}H^2 \ge 0$.
\end{proof}

\subsection{Long time solution}
We prove the {\mcf} equation $(1.1)$ admits long time solution, i.e., $T = +\infty$, in this subsection:
\begin{theorem}
The maximal time of existence of the solution to the equation $(1.1)$ is $T = +\infty$.
\end{theorem}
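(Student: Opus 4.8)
The plan is to argue by contradiction, feeding the a priori estimates already established in \S4 and \S5.1 into Huisken's blow-up criterion. Suppose the maximal existence time $T$ is finite. First I would invoke the height function estimate (Theorem~\ref{bd4hf}): it confines every evolving surface $S(t)$, $t\in[0,T)$, between the two fixed barrier surfaces $S(-r_0)$ and $S(r_0)$, hence inside a fixed compact region $K\subset M^3$. On $K$ the ambient hyperbolic metric and all its covariant derivatives are bounded, so the flow takes place in a region of uniformly bounded geometry and Huisken's local parabolic theory applies without change, even though $M^3$ itself is non-compact. Next I would quote Theorem~\ref{bd4a}, which gives the uniform bound $|A|^2(\cdot,t)\le c_4$ on $[0,T)$ (and, via the ensuing corollary, $H^2(\cdot,t)\le 2c_4$). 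But the theorem of Huisken recalled in \S2 (\cite{Hui84}) asserts that if $T<\infty$ then $|A|_{\max}(t)\to\infty$ as $t\to T$; this directly contradicts $|A|^2\le c_4$. Hence $T=+\infty$.

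To make the extension self-contained I would instead proceed through higher-derivative estimates. Starting from the bound $|A|^2\le c_4$, the evolution equation $(4.5)$ together with standard parabolic interpolation inequalities (as in \cite{Hui84}, \cite{EH91}) yields, by a routine bootstrap, uniform bounds $\sup_{S(t)}|\nabla^m A|\le C_m$ on $[0,T)$ for every $m\ge 0$, with $C_m$ depending only on $S_0$, $\Sigma$ and $m$. Combining these with the confinement to the compact set $K$ and with the uniform positive lower bound on the gradient function $\Theta$ (so that the $S(t)$ remain uniformly graphical over $S$), one concludes that $F(\cdot,t)$ converges in $C^\infty$ to a smooth immersion $F(\cdot,T)$ as $t\to T$; in particular $S(T):=F(S,T)$ is a smooth closed surface, still a graph over $S$. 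Restarting the mean curvature flow with initial datum $S(T)$, Huisken's short-time existence produces a smooth solution on $[T,T+\varepsilon)$ for some $\varepsilon>0$, which by uniqueness glues with the original flow to a smooth solution of $(1.1)$ on $[0,T+\varepsilon)$, contradicting the maximality of $T$.

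I do not expect a genuine obstacle in this section: the substantive work is already contained in Theorem~\ref{bd4hf} (the barrier/compactness argument via the equidistant foliation) and Theorem~\ref{bd4a} (the curvature bound obtained by adding negative terms through the auxiliary function $|A|^2\Theta^{-4}$). The only point that needs a word of care is that $M^3$ is open, so Huisken's blow-up criterion is not literally a statement about a closed ambient manifold; this is handled precisely by the barrier surfaces, which trap the entire flow in a fixed compact subset of $M^3$, after which the compact-ambient theory applies verbatim. The interpolation estimates invoked in the self-contained variant are standard parabolic bootstrapping on a bounded-geometry region and introduce nothing new.
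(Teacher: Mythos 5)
Your proposal is correct, and your second (``self-contained'') argument is essentially the paper's own proof: the paper first cites Theorem~4.5 to trap $\{S(t)\}$ in a fixed compact region, then applies Theorem~5.1 (the $|A|^2$ bound), Corollary~5.2 (the $H^2$ bound), Corollary~5.5 (derivative bounds on $\Theta$, obtained through the quasilinear parabolic equation for the height function in Lemma~5.4) and Proposition~5.6 (derivative bounds on $A$ by induction) to get smooth subconvergence to a graph $S_T$, then restarts the flow from $S_T$ using Theorem~2.3 and invokes maximality. Your first argument --- taking the confinement to a compact set, the bound $|A|^2\le c_4$, and Huisken's blow-up dichotomy (the second half of Theorem~2.3) to reach an immediate contradiction --- is shorter and is also available with the paper's stated results; the paper does not use it explicitly, and your caveat about the ambient manifold being noncompact is exactly the reason one might prefer the restart argument, where the barrier surfaces reduce the problem to a fixed compact region of bounded geometry before any parabolic theory is invoked. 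The two routes differ only in whether the blow-up criterion or the restart via higher-order estimates carries the final step; both rest on the same a priori estimates from \S4 and \S5.1, and both are sound.
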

We start with controlling the derivatives for the {\hf} along the {\mcf}:
\begin{lem}\label{bd4hfd}
If the {\mcf} equation $(1.1)$ has a solution on $[0,T)$, $0<T\leq\infty$, then
\begin{equation*}
   |\nabla^{\ell}u|\leq{}K_{\ell}< \infty ,
\end{equation*}
for all $\ell=1,2,\ldots$, where $\{K_{\ell}\}_{\ell=1}^{\infty}$ is the collection of constants depending
only on the initial data and the maximal time $T$.
\end{lem}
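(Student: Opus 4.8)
The plan is to run a standard bootstrap/interpolation argument à la Ecker--Huisken. The starting data are already in hand: by Theorem \ref{bd4hf} the height function $u$ itself is uniformly bounded on $[0,T)$, by Theorem \ref{bd4a} and Corollary 4.2 the second fundamental form and mean curvature satisfy $|A|^2 \le c_4$ and $H^2 \le 2c_4$ on $[0,T)$, and by Corollary 4.6 the gradient function satisfies $c_3 \le \Theta \le 1$. Since $u$ is the distance to the fixed reference surface $S$ and $\Theta = \inner{\nu}{\n}$ is essentially $1/|\nabla u|_{\text{graph}}$, the uniform lower bound on $\Theta$ already gives $|\nabla u| \le K_1 < \infty$, which is the case $\ell = 1$.

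For the higher derivatives I would proceed inductively. The key is that, because all evolving surfaces lie in a fixed compact region of $M^3$ (Theorem \ref{bd4hf}) and $|A|^2$ is uniformly bounded, one gets uniform bounds on \emph{all} derivatives of the second fundamental form: $|\nabla^m A| \le C_m$ on $[\delta, T)$ for any $\delta > 0$, by the usual Huisken-type interior estimates for the mean curvature flow (the evolution equation for $|\nabla^m A|^2$ has the schematic form $L|\nabla^m A|^2 \le -2|\nabla^{m+1}A|^2 + (\text{polynomial in lower-order }\nabla^j A\text{ and ambient curvature})$, and the ambient curvature of the hyperbolic manifold is parallel, so no new terms appear). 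Feeding these into the evolution equation $(4.8)$ for $\Theta$, namely $L\Theta = (|A|^2 - 2)\Theta + \n(H_{\n}) - H\inner{\nablabar_\nu \n}{\nu}$, one differentiates repeatedly in space: $L(\nabla^{\ell}\Theta)$ is controlled by $\nabla^j \Theta$ for $j < \ell$, together with $\nabla^j A$ and geometric quantities of the fixed surface $S$, all of which are now bounded. A maximum-principle argument on $|\nabla^{\ell}\Theta|^2$ (or a suitable test function $|\nabla^{\ell}\Theta|^2 + \Lambda|\nabla^{\ell-1}\Theta|^2$ to absorb bad terms) then gives $|\nabla^{\ell}\Theta| \le \tilde K_\ell$ on $[0,T)$. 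Finally, since the surfaces are graphs over $S$ with $\Theta$ bounded away from zero, one translates bounds on $\nabla^{\ell}\Theta$ (and on the geometry of $S(t)$ relative to $S$) into bounds on $|\nabla^{\ell}u|$ by purely algebraic manipulation of the graph representation; the constants $K_\ell$ then depend only on the initial data and on $T$ (through the size of the compact region and the constants $c_3, c_4$).

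The main obstacle is bookkeeping rather than anything deep: one must check that when $\nabla^{\ell}$ is applied to equation $(4.8)$, every term that appears is one of (i) a lower-order derivative of $\Theta$, (ii) a bounded derivative $\nabla^j A$ of the second fundamental form, or (iii) a bounded covariant derivative of the normal field $\n$ and the curvature of the fixed reference surface $S$ — in particular no term involving $\nabla^{\ell} A$ with the "wrong" sign survives, so the maximum principle closes. Because the ambient manifold is hyperbolic (constant curvature, hence parallel curvature tensor) this is exactly the situation of the Ecker--Huisken interior estimates, and the argument goes through with only cosmetic changes; for this reason I would present the computation schematically and cite \cite{EH91} and \cite{Hui84} for the detailed form of the higher-order evolution equations.
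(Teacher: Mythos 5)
Your proposal is correct but takes a genuinely different route from the paper. The paper proves the $\ell = 1$ case exactly as you do, from $\Theta = 1/\sqrt{1+|\nabla u|^2}$ together with the uniform lower bound $\Theta \geq c_3$; but for $\ell \geq 2$ the paper abandons the maximum-principle machinery entirely. Instead it observes that the height equation can be rewritten as a single scalar quasilinear parabolic PDE for $u$,
\begin{equation*}
  \ppl{}{t}u = -H\Theta = \Delta u - \div(\bar\nabla \ell),
\end{equation*}
and — given that $u$ is uniformly bounded (Theorem \ref{bd4hf}) and $|\nabla u|$ is uniformly bounded — invokes the standard interior Schauder-type regularity theory of Friedman and Lieberman for second-order quasilinear parabolic equations to bound all higher derivatives of $u$ at once. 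Your approach instead stays inside the geometric-flow toolbox: bound $|\nabla^m A|$ via Huisken-type evolution inequalities (this is effectively the content of the paper's later Proposition 5.6, which the paper proves independently of the present lemma, so there is no circularity), then control $\nabla^\ell\Theta$ and finally translate back to $\nabla^\ell u$ via the graph representation. The paper's route is shorter once one is willing to cite the PDE literature, and it makes the constants' dependence on the initial data and $T$ transparent; your route is more self-contained for a geometric-analysis reader, but the final translation step from $(\nabla^m A, \nabla^j\Theta)$-bounds to $\nabla^\ell u$-bounds deserves more care than the phrase ``purely algebraic manipulation'' suggests — in particular the $\Theta$-bounds only see $|\nabla u|$ and not the full tensor $\nabla^{\ell+1}u$, so it is really the $\nabla^m A$-bounds doing the work, and the detour through higher derivatives of $\Theta$ can be omitted. (Also note that the evolution equation for $\Theta$ you quote is $(4.9)$, not $(4.8)$; $(4.8)$ is the one for $u$.)
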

\begin{proof}
Since the evolving surfaces are graphical surfaces, we have, from \cite{Hui86},
$\Theta(\cdot,t)=1/\sqrt{1+|\nabla{}u|^{2}}$. From Corollary 4.6,  there is a positive lower bound for
$\Theta(\cdot, t)$, therefore $|\nabla{}u|$ is uniformly bounded from above by a constant
depending only on the initial data and $T$.

We observe that equation $(4.8)$:
\begin{equation*}
\ppl{}{t}u = -H\Theta =\Delta u -div(\bar{\nabla}\ell)
\end{equation*}
is a single quasilinear parabolic equation for the {\hf} $u(\cdot,t)$, while $u(\cdot,t)$ is uniformly bounded
by the Theorem 4.5. This enables us to apply the standard regularity results in quasilinear second order
parabolic equations (\cite{Fri64}, \cite{Lie96}) to bound all derivatives of $u$.
\end{proof}
Using Lemma 5.4 and the relation $\Theta(\cdot,t)=1/\sqrt{1+|\nabla{}u|^{2}}$, we obtain upper bounds
for the derivatives of the {\gradf} $\Theta(\cdot,t)$:
\begin{cor}
There exist constants $0<K'_{\ell}<\infty$ depending only on $S_{0}$ and $T$ such that
\begin{equation*}
   |\nabla^{\ell}\Theta|^{2}\leq{}K'_{\ell}
\end{equation*}
on $S(t)$ for $0\leq{}t<T$, and $\ell = 1,2,\ldots$.
\end{cor}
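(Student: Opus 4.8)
The plan is to bootstrap directly from Lemma 5.4 by repeated use of the chain rule, using only the algebraic identity $\Theta=(1+|\nabla u|^2)^{-1/2}$ recorded in the proof of that lemma; no new evolution equation or maximum principle is needed, though one could alternatively derive a parabolic inequality for $|\nabla^\ell\Theta|^2$ and argue as in Theorem 5.1. I would induct on $\ell$. The base case $\ell=0$ is Corollary 4.6, which gives $c_3\le\Theta\le1$, so that $w:=\Theta^{-2}=1+|\nabla u|^2$ takes values in the fixed compact interval $[1,c_3^{-2}]$. For the inductive step one differentiates the identity $w=1+|\nabla u|^2$ a total of $j$ times on $S(t)$ for $1\le j\le\ell$: since $\nabla g=0$, each $\nabla^j w$ is a universal contraction, with the metric $g(t)$, of terms of the form $(\nabla^{a}u)\otimes(\nabla^{b}u)$ with $2\le a+b\le j+2$, together with curvature corrections arising when commuting covariant derivatives. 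All of $\nabla u,\dots,\nabla^{\ell+1}u$ are bounded by $K_1,\dots,K_{\ell+1}$ from Lemma 5.4, so $|\nabla^j w|\le C_j$ with $C_j$ depending only on $S_0$ and $T$. Finally, since the scalar function $s\mapsto s^{-1/2}$ is smooth with all derivatives bounded on $[1,c_3^{-2}]$, the chain rule expresses $\nabla^\ell\Theta=\nabla^\ell(w^{-1/2})$ as a polynomial in $w^{-1/2}$ and $\nabla w,\dots,\nabla^\ell w$, each now bounded; squaring yields $|\nabla^\ell\Theta|^2\le K'_\ell$.

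The one point needing care, and the only real obstacle, is that the covariant derivatives and contractions above are taken with respect to the evolving induced metric $g(t)$, so the argument is not complete until one also has uniform $C^\infty$ bounds on $g(t)$ and a uniform lower bound on its smallest eigenvalue. This comes from the same input: writing a point of $M^3$ as $(x,r)$ with $r$ the signed distance to the reference surface, the ambient metric takes the form $\rho(x,r)+dr^2$, where $\rho(x,r)$ is the metric on the equidistant surface through $(x,r)$ given by $(2.1)$, so the metric induced on the graph $r=u(x)$ is $g_{ij}(x)=\rho_{ij}(x,u(x))+\partial_i u\,\partial_j u$, a smooth function of $u$ and its first derivatives. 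Hence the bounds on all derivatives of $u$ from Lemma 5.4, together with the confinement to a compact region from Theorem 4.5 (which keeps $\rho$ and its derivatives bounded and $\rho$ uniformly nondegenerate), give uniform bounds on $g(t)$ in every $C^k$ norm and a uniform ellipticity constant; equivalently, one may invoke the parabolic regularity theory of \cite{Fri64} and \cite{Lie96} already used in Lemma 5.4, applied to the system satisfied by $g_{ij}$.

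With $g(t)$ so controlled, what remains is purely the combinatorial bookkeeping of the chain rule: incrementing $\ell$ by one costs at most one extra derivative of $u$ and one extra derivative of the connection of $g(t)$ (which is itself determined by $u$ and $\nabla u$), both already absorbed by $K_{\ell+1}$ of Lemma 5.4 and the $C^\infty$ bounds on $g(t)$. The induction is therefore finite and non-circular, and every constant produced depends only on $S_0$ and $T$, as asserted.
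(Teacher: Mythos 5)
Your proposal is correct and follows the same route as the paper's one-line proof: the paper simply invokes Lemma~5.4 together with the identity $\Theta=(1+|\nabla u|^2)^{-1/2}$ and asserts the bounds follow. You have filled in the chain-rule/Fa\`a di Bruno bookkeeping that the paper leaves implicit, and in particular you correctly flag the two points one must not skip --- the lower bound $\Theta\geq c_3$ from Corollary~4.6 (needed so that the outer function $s\mapsto s^{-1/2}$ is smooth with bounded derivatives on the relevant compact interval), and the need for uniform control of the evolving metric $g(t)$ and its curvature (the latter also follows directly from the bound on $|A|^2$ in Theorem~5.1 via the Gauss equation, which is a slightly lighter route than re-deriving metric bounds from the graph representation).
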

Having bounded the {\hf} $u$, the {\gradf} $\Theta$, and their derivatives, we obtain the estimates for the
derivatives of the {\sff}:
\begin{pro}
For each $n \ge 1$, there is a constant $c_5(n,S_0) > 0$ such that $|\nabla^{n}A|^2 \le c_5(n,S_0)$
uniformly on $S(t)$, for $t \in [0,T)$.
\end{pro}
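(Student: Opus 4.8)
The plan is to argue by induction on $n$, running a Bernstein-type interpolation argument on the evolution equations for the covariant derivatives of the second fundamental form, exactly as in Huisken's original treatment (\cite{Hui84}, \cite{Hui86}) but with all curvature terms specialized to the constant-curvature ambient space $M^3$ (sectional curvature $-1$, $\Rbar_{3i3j}=-g_{ij}$, $\Ric(\nu,\nu)=-2$). The base case $n=0$, i.e. the bound $|A|^2 \le c_4$, is already Theorem 5.1. For the inductive step, assume $|A|^2, |\nabla A|^2, \ldots, |\nabla^{n-1}A|^2$ are all uniformly bounded on $S(t)$ for $t\in[0,T)$; we must bound $|\nabla^n A|^2$.

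First I would record the general evolution equation for $\nabla^n A$. Commuting covariant derivatives past the heat operator $L=\ppl{}{t}-\Delta$ produces, by Simons' identity and its differentiated versions,
\begin{equation*}
   L\big(|\nabla^{n}A|^{2}\big) = -2|\nabla^{n+1}A|^{2} + \sum_{i+j+k=n} \nabla^{i}A * \nabla^{j}A * \nabla^{k}A * \nabla^{n}A + (\text{lower-order curvature terms}),
\end{equation*}
where $*$ denotes a contraction with the metric and the ambient curvature tensor contributes only bounded constant coefficients because $M^3$ is hyperbolic. By the inductive hypothesis every factor $\nabla^{i}A$ with $i\le n-1$ is bounded, so the cubic sum is controlled by $C(1+|\nabla^n A|^2)$ for a constant $C$ depending only on $S_0$, $n$, and the earlier bounds. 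Thus
\begin{equation*}
   L\big(|\nabla^{n}A|^{2}\big) \le -2|\nabla^{n+1}A|^{2} + C\big(1+|\nabla^{n}A|^{2}\big).
\end{equation*}
This alone is not enough to conclude a bound, since the right side is not negative for large $|\nabla^n A|^2$; one also needs the negative gradient term from the \emph{previous} order to absorb things. So the key step, following Huisken, is to consider the combination $\phi = |\nabla^{n}A|^{2} + \Lambda|\nabla^{n-1}A|^{2}$ for a large constant $\Lambda$ (chosen after the fact); then $L\phi$ acquires the term $-2\Lambda|\nabla^{n}A|^{2}$ from $L|\nabla^{n-1}A|^2$, which dominates the bad linear term $C|\nabla^{n}A|^2$ once $\Lambda > C/2$. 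One still has a leftover cubic term of the form $|\nabla^{n-1}A|*|\nabla^{n}A|*|\nabla^{n}A|$, but again $|\nabla^{n-1}A|$ is bounded, so this is reabsorbed. The outcome is $L\phi \le -|\nabla^{n}A|^{2} + C' \le -\phi + C''$ on $S(t)$ after adjusting constants, and then the parabolic maximum principle applied at the spatial maximum of $\phi$ gives $\phi_{\max}(t) \le \max\{\phi_{\max}(0),\,C''\}$, hence a uniform bound on $|\nabla^{n}A|^{2}$.

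The main obstacle is bookkeeping rather than conceptual: one must verify that the commutator terms generated when moving $\nabla^n$ through $L$ are genuinely of lower order (so the inductive hypothesis applies) and that the constants can be taken to depend only on $S_0$ (not on $T$). The first point is exactly Simons-type identities iterated $n$ times, and the constant-curvature hypothesis on $M^3$ is what keeps the curvature contributions bounded with fixed coefficients — this is where the \af{} (in fact just hyperbolic) geometry of the ambient space is used. The second point follows because Theorem 5.1, Corollary 5.2, and Corollary 4.6 all give bounds independent of $T$, so the induction propagates $T$-independence; as in \cite{Hui86}, one can even arrange exponential-in-$t$ decay of these quantities, which will feed directly into the exponential convergence argument of the next subsection.
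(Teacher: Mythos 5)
Your proof takes essentially the same approach as the paper: it records the same Hamilton--Huisken evolution inequality for $|\nabla^n A|^2$ and then inducts on $n$, exactly as the paper indicates in one line. The auxiliary quantity $\phi = |\nabla^n A|^2 + \Lambda |\nabla^{n-1}A|^2$ that you introduce to close the induction is the standard mechanism implicit in the paper's citation of \cite{Ham82}, \cite{Hui84}, so this is a correct fleshing-out of the same argument rather than a different route.
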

\begin{proof}
The basic {\ee} for $|\nabla^{n}A|^2$ is (\cite{Ham82}, \cite{Hui84}):
\begin{equation*}
   L(|\nabla^{n}A|^2) = -2|\nabla^{n+1}A|^2 + K''_{i,j,k,n}\sum_{i+j+k=n}
   \nabla_{i}A\ast \nabla_{j}A\ast \nabla_{k}A\ast \nabla_{n}A,
\end{equation*}
where $L = \ppl{}{t} - \Delta$, and $K''_{i,j,k,n}$ is a constant depending on $S_0$, and natural numbers $i$,
$j$, $k$, and $n$ with $i+j+k=n$. The upper bounds for all derivatives of $A$ follow from an induction on $n$.
\end{proof}
With all the pieces in place, we conclude this subsection with:
\begin{proof} [proof of Theorem 5.3]
By the Theorem 4.5, the {\hf} $u(\cdot, t)$ is uniformly bounded, therefore, the surfaces $\{S(t)\}_{t\in[0,T)}$
stay in a compact smooth region in $M^3$. Applying the Theorem 5.1, Corollary 5.2, Corollary 5.5 and
Proposition 5.6, the sequence $\{S(t)\}$ converges to a limiting smooth surface $S_T$ as $t \to T$, which is
again, a graph. Now we use $S_T$ as our initial surface in the equation $(1.1)$ to extend the flow beyond
$T$, by the existence of short-time solutions result for the new parabolic equation (Theorem 2.3).
\end{proof}

\subsection{Convergence and uniqueness}
In this subsection, we conclude the proof of our Theorem 1.1 by showing the convergence and uniqueness
of the limiting surface to the equation $(1.1)$. We first establish the {\mc} estimate for the limiting surface.
\begin{theorem}
Let $H(\cdot,t)$ be the {\mc} functions of the evolving surfaces $S(t)$, then $Sup_{S(t)}|H(\cdot,t)| \to 0$ as
$t \to \infty$.
\end{theorem}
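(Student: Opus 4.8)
The plan is to run an $L^2$ energy argument on the mean curvature, exploiting that the flow is area-decreasing. By the evolution equation $\partial_t\, d\mu = -H^2\, d\mu$ of Lemma 4.1, the area $|S(t)|$ is monotone nonincreasing, so $|S(t)|\le|S_0|$ for all $t$. First I would produce a positive lower bound for $|S(t)|$ independent of $t$: each $S(t)$ is a graph over $S$, hence a closed surface of genus $g$, and the Gauss equation for a surface in a hyperbolic three-manifold reads $K(S(t)) = -1 + \det A(\cdot,t)$, so Gauss--Bonnet gives
\[
   |S(t)| = \Acal_{\hyp} + \int_{S(t)} \det A\, d\mu \ \ge\ \Acal_{\hyp} - \tfrac12 \int_{S(t)} |A|^2\, d\mu \ \ge\ \Acal_{\hyp} - \tfrac{c_4}{2}\,|S(t)|,
\]
using $\det A = \lambda_1\lambda_2 \ge -\tfrac12|A|^2$ and the bound $|A|^2 \le c_4$ from Theorem 5.1. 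Thus $|S(t)| \ge 2\Acal_{\hyp}/(2+c_4) > 0$, and with monotonicity $|S(t)|\downarrow A_\infty>0$. Consequently $\int_0^\infty\!\bigl(\int_{S(t)} H^2\, d\mu\bigr)\, dt = |S_0| - A_\infty < \infty$.

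Second, I would upgrade this integrability to $\phi(t) := \int_{S(t)} H^2\, d\mu \to 0$. Differentiating $\phi$ and using $(\partial_t - \Delta)H = H(|A|^2-2)$ from Lemma 4.2 together with $\partial_t\, d\mu = -H^2\, d\mu$, one obtains
\[
   \phi'(t) = -2\int_{S(t)} |\nabla H|^2\, d\mu + 2\int_{S(t)} H^2(|A|^2 - 2)\, d\mu - \int_{S(t)} H^4\, d\mu .
\]
Every term on the right is bounded in absolute value by a constant depending only on $S_0$: we have $H^2 \le 2c_4$ (Corollary 5.2), $|A|^2 \le c_4$, $|\nabla H|$ is controlled by $|\nabla A|$ which is bounded by Proposition 5.6, and $|S(t)| \le |S_0|$. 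Hence $|\phi'(t)| \le C(S_0)$ on $[0,\infty)$, and a nonnegative, integrable function on $[0,\infty)$ with uniformly bounded derivative must tend to $0$ (otherwise one extracts $t_k \to \infty$ with $\phi(t_k) \ge \varepsilon$, hence $\phi \ge \varepsilon/2$ on intervals of fixed length $\varepsilon/(2C)$ around the $t_k$, contradicting $\int_0^\infty \phi < \infty$). So $\phi(t) \to 0$.

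Third, I would pass from $L^2$ to uniform decay of $H$. Since the $S(t)$ are graphs over the fixed $S$ with $\Theta(\cdot,t) \ge c_3 > 0$ (Corollary 4.6) and all covariant derivatives of the height function uniformly bounded in $t$ (Lemma 5.4, Corollary 5.5), the induced metrics on $S(t)$ are uniformly equivalent to a fixed reference metric and of uniformly bounded geometry; in particular $S(t)$ carries a Sobolev/interpolation inequality with constants independent of $t$. Because $|\nabla A|^2$ and $|\nabla^2 A|^2$ are uniformly bounded by Proposition 5.6 (and $H$ is the metric trace of $A$), the norm $\|H(\cdot,t)\|_{W^{2,2}(S(t))}$ is uniformly bounded, and the two-dimensional Gagliardo--Nirenberg inequality $\|H\|_{C^0(S(t))} \le C\,\|H\|_{W^{2,2}(S(t))}^{1/2}\,\|H\|_{L^2(S(t))}^{1/2}$ together with $\|H(\cdot,t)\|_{L^2}^2 = \phi(t) \to 0$ yields $\sup_{S(t)} |H(\cdot,t)| \to 0$. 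Alternatively one may argue by contradiction and compactness: if $\sup_{S(t_k)}|H| \ge \varepsilon$ along some $t_k \to \infty$, the uniform $C^k$ bounds let the surfaces $S(t_k)$ subconverge in $C^2$ to a smooth limit carrying a function $H_\infty$ with $\sup|H_\infty|\ge\varepsilon$ but $\int H_\infty^2 = \lim\phi(t_k) = 0$, which is absurd. The step I expect to be the main obstacle is exactly this last one, converting integral decay into uniform decay, and it is precisely where the uniform bounded geometry of the graphical evolving surfaces furnished by \S 4 and \S 5 is indispensable; the area lower bound in the first step is the other point requiring a little care.
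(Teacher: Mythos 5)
Your proposal follows essentially the same route as the paper: use $\partial_t\,d\mu=-H^2\,d\mu$ to get $\int_0^\infty\!\int_{S(t)}H^2\,d\mu\,dt\le|S_0|<\infty$, show $\big|\tfrac{d}{dt}\int_{S(t)}H^2\,d\mu\big|$ is uniformly bounded using the a priori bounds on $|A|$, $|\nabla A|$, and the area, conclude $\int_{S(t)}H^2\,d\mu\to0$, and finish by interpolation to get uniform decay of $H$. Your write-up is in fact somewhat more careful than the paper's in the two subtle spots --- you correctly invoke \emph{integrability} together with a bounded derivative (the paper's phrase ``bounded in $t$, with bounded derivative, therefore tends to zero'' is not a valid implication on its own and clearly means integrable), and you flesh out the interpolation/compactness step explicitly --- while the Gauss--Bonnet lower bound on $|S(t)|$ is a nice extra but unnecessary, since $|S(\infty)|\ge0$ already suffices.
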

\begin{proof}
We consider the function $h(t) = \int_{S(t)}H^{2}d\mu$. From the formula $(4.3)$, we have
\begin{equation}
   \ddl{}{t}|S(t)|= -h(t) \le 0,
\end{equation}
where $|S(t)|$ is the surface area of $S(t)$.

We integrate both sides of $(5.4)$, since $T = \infty$, to find
\begin{equation*}
  \int_0^{\infty}h(t)dt = |S_0| - |S(\infty)| \le  |S_0|.
\end{equation*}

Because the initial surface $S_0$ is a closed {\is}, we know that $|S_0|$ is finite, hence the function $h(t)$ is
uniformly bounded.

We now compute the $t-$derivative of this function $h(t)$:
\begin{eqnarray*}
\ddl{}{t} h & = & \ddl{}{t}\int_{S(t)}H^{2}d\mu \\
& = & \int_{S(t)}2H H_t-H^4 d\mu \\
 & = & \int_{S(t)}-2|\nabla H|^2+2H^2(|A|^2-2) - H^4 d\mu.
\end{eqnarray*}
Note that we also have uniform bound for $|\nabla H(\cdot, t)|$, which follows from $(4.2)$, Theorem 4.5, and
Corollary 5.5, or Poposition 4.6. Applying this and upper bounds on $|A|^2$ (Theorem 5.1), we find that the
term $|\ddl{}{t} h(t)|$ is also uniformly bounded in $t$. Now the function $h(t)$ is bounded in $t$, with bounded
derivative in $t$, therefore it must tend to zero as $t \to \infty$.

Now a standard interpolation argument, with the uniform bounds on $H$ and $|\nabla H(\cdot, t)|$, shows
that $Sup_{S(t)}|H| \to 0$ as $t \to \infty$.
\end{proof}
\begin{proof} (of Theorem 1.1) Part (i) is proved by Theorem 5.3, and part (ii) is implied by Lemma 4.4. We are
left to show convergence and properties of the limiting surface.

From Theorem 5.7, every sequence $S(t)$ has a subsequence $S(t_i)$ converging smoothly to some stationary
asymptotic limiting surface, say $S_i(\infty)$. It is clear that this limiting surface is closed and incompressible which
satisfies that its {\mc} is zero, hence minimal. However, the ambient space is {\af}, i.e., $M^3$ only admits one
unique incompressible {\ms}, which is $\Sigma$.

The convergence is exponential. This can be seen as follows: for $t$ large enough, since {\es}s are closed, and
the {\ms} has {\pc}s of absolute values less than one, we have $|A|^2 \le 2 -\delta$, for some constant $\delta > 0$.
Then we find, in the proof of the Theorem 5.7: $\ddl{}{t} h(t) \le -2\delta h(t)$, which forces the exponential
convergence.
\end{proof}

\section{Applications}
In this section, we apply our method to more general cases. Applications include other class of hyperbolic
{\tm}s, as well as more geometrical properties of the {\mcf} if better initial data is chosen.
\subsection{Nearly Fuchsian manifolds}
In this subsection, we consider a slightly larger class of {\qf} manifolds than {\af} manifolds, i.e.,
\begin{Def}
A {\qf} manifold $N$ is called {\bf {\naf}} if it contains a closed {\is} (not necessarily minimal) of {\pc}s within
the unit interval $(-1,1)$.
\end{Def}
It is not known if any {\naf} manifold is in fact {\af}.

From the proof of Theorem 1.1, after a modification of the argument, it is easy to see that parts (i) and (ii) of
Theorem 1.1 hold in this class:
\begin{theorem}
Let $N$ be a {\naf} {\tm} which contains a closed {\is} $S$ whose {\pc}s lying in $(-1,1)$. If a smooth closed surface
$S_0$ in $N$ is a graph over $S$: there is a constant $c_0 > 0$ such that $\langle{\n},{\nu_0}\rangle \geq c_0$,
where $\n$ and $\nu_0$ are as in Theorem 1.1. Then:
\begin{enumerate}
\item
the {\mcf} equation $(1.1)$ with initial surface $S(0) = S_0$ has a long time solution;
\item
the {\es}s $\{S(t)\}_{t \in \R}$ stay smooth and remain as graphs over $S$ for all time;
\item
For each sequence $\{ t \}$, there is a subsequence $\{t'\}$ such that {\es}s $\{S(t')\}_{t'  \to \infty}$ converge
smoothly to a {\ms} $S(\infty)$, which is embedded.
\end{enumerate}
\end{theorem}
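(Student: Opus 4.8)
The plan is to re-run the proof of Theorem 1.1 from Sections 4 and 5 with the unique {\ms} $\Sigma$ replaced everywhere by the reference surface $S$ that the definition of a {\naf} manifold already supplies (a closed {\is} with $|\lambda_j(S)|<1$), and to observe that the {\af} hypothesis is genuinely used only once: at the very end of the proof of Theorem 1.1, where uniqueness of the incompressible {\ms} in an {\af} manifold is invoked to upgrade subsequential convergence to convergence of the whole flow and to identify the limit with $\Sigma$. Relinquishing that single ingredient is exactly what separates part (iii) below from part (iii) of Theorem 1.1.

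First I would build the {\ef} $\{S(r)\}_{r\in\R}$ of $N$. Lemma 2.2 is purely local and valid in any hyperbolic {\tm}, so since $|\lambda_j(S)|<1$ its formulas apply: the metrics $g(x,r)=e^{2v(x)}[\cosh(r)\I+\sinh(r)e^{-2v(x)}A(x)]^{2}$ have eigenvalues $e^{2v(x)}(\cosh r+\lambda_j(x)\sinh r)^{2}>0$ for every $r$, so $\{S(r)\}_{r\in\R}$ is an {\ef} of $N$ with {\pc}s $\mu_j(x,r)=\dfrac{\tanh r+\lambda_j(x)}{1+\lambda_j(x)\tanh r}$. Then, exactly as in Proposition 3.4, setting $\lambda_0=\max_{x\in S}\max_{j}|\lambda_j(x)|$ and $r_0=\frac{1}{2}\log\frac{1+\lambda_0}{1-\lambda_0}$, every {\pc} of $S(r)$ is positive for $r>r_0$ and negative for $r<-r_0$; the fibers $S(\pm r_0)$ will serve as barrier surfaces.

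With these barriers available, the {\hf} estimate (Theorem 4.5) goes through verbatim: it uses only the {\ef} just constructed, the positivity of $\Theta$ along the flow (Lemma 4.4, whose evolution equation $(4.9)$ sees the ambient metric only through the constant $\mathrm{Ric}(\nu,\nu)=-2$ and through the terms $\n(H_{\n})$ and $\inner{\nablabar_{\nu}\n}{\nu}$, which are controlled uniformly over the compact region), and Hopf's {\maxp} (Lemma 2.1). Hence $S(t)$ stays trapped between $S(-r_0)$ and $S(r_0)$; Corollary 4.6 gives $c_3\le\Theta(\cdot,t)\le1$; and then Theorem 5.1 and Corollary 5.2 (uniform bounds on $|A|^2$ and $H^2$), Lemma 5.4, Corollary 5.5 and Proposition 5.6 (uniform bounds on all derivatives of $u$, $\Theta$ and $A$), and Theorem 5.3 (long-time existence, $T=\infty$) all apply unchanged, since each of them rests only on the evolution equations of Section 4, the barrier slab, and the bounds on $\Theta$ — none of which needs $S$ to be minimal or $N$ to be {\af}. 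This proves (i) and (ii). The same reasoning makes Theorem 5.7 go through, so $\sup_{S(t)}|H|\to0$ as $t\to\infty$, via $\ddl{}{t}|S(t)|=-\int_{S(t)}H^{2}\,d\mu\le0$ together with the uniform bounds.

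For (iii): the uniform $C^{\infty}$ estimates and compactness of the slab give, for any sequence $t\to\infty$, a subsequence $\{t'\}$ with $S(t')\to S(\infty)$ smoothly; $S(\infty)$ is closed, has $H\equiv0$ by Theorem 5.7 hence is minimal, still satisfies $\inner{\n}{\nu_\infty}\ge c_3>0$ by Corollary 4.6 hence is a graph over $S$ and therefore embedded in $N$, and is incompressible since it is isotopic to $S$ through such graphs. What one cannot do without knowing $N$ is {\af} is conclude that all subsequential limits coincide and that the full flow converges — let alone exponentially, which would further require $|A|^{2}\le2-\delta$, not available here since the limiting {\ms} may have {\pc}s of absolute value $\ge1$ — which is precisely why (iii) is stated only along subsequences. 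So there is no real analytic obstacle in the generalization; the only point that needs care is the line-by-line check that none of the a priori estimates of Sections 4 and 5 covertly used minimality of the reference surface or uniqueness of $\Sigma$.
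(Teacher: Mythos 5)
Your proposal is correct and is essentially the same argument the paper has in mind: the authors simply remark that the proof of Theorem~1.1 goes through with the reference surface $S$ in place of $\Sigma$, and indeed the proof of Theorem~4.5 already contains the remark ``An easy modification to treat the general case when the reference surface $S\neq\Sigma$\dots,'' which constructs precisely the barrier foliation $\{S(r)\}_{r\in\R}$ you describe. Your line-by-line audit that Lemma~2.2, Proposition~3.4, Lemma~4.4, Theorem~4.5, Corollary~4.6, Theorem~5.1, Corollary~5.2, Lemmas/Corollaries~5.4--5.6, Theorem~5.3 and Theorem~5.7 use only $|\lambda_j(S)|<1$, the barrier slab, and the resulting uniform estimates --- never the minimality or uniqueness of $\Sigma$ --- is exactly the substance of the paper's one-line ``after a modification of the argument.''

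One small correction to your closing aside: you assert that without the almost-Fuchsian hypothesis one cannot conclude that all subsequential limits coincide, but the paper explicitly observes the opposite --- for a fixed initial surface the flow limit is unique by Simon's theorem (\cite{Sim83}), so the full flow does converge. What is genuinely lost without almost-Fuchsianness is only the identification of the limit: different initial surfaces may converge to different minimal surfaces, and one cannot name the limit in advance. Your point about exponential convergence failing (since $|A|^2\le 2-\delta$ is not guaranteed on the limiting minimal surface) is correct, and is the right reason the theorem does not restate part~(iii) of Theorem~1.1 verbatim. Neither of these affects the validity of your proof of the statement as given, since it only asserts subsequential convergence.
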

The flow limit $S(\infty)$ is unique for a given {\ins} (\cite {Sim83}). However, we note that in comparison with
Theorem 1.1, without assuming $N$ is {\af}, it is possible that $N$ might contain several {\ms}s since different {\ins}s
may be deformed into different {\ms}s. In \cite{Wan10}, certain {\qf} manifolds contain several {\ms}s.

We note that Theorem 1.3 also holds for nearly Fuchsian three-manifolds, i.e., if $M^3$ is {\naf}, and $S$ is a closed 
{\is} with maximal absolute {\pc} $\lambda_0 = \max\{|\lambda(S)|\} < 1$, then the {\hd} of the limit set associate to 
$M^3$ is less than $1+\lambda_0^2$.
\subsection{Mean-convexity}
Theorems 1.1 and 6.2 indicate that the {\mcf} $(1.1)$ is quite insensitive about the initial data. As an application,
we consider more specific initial surfaces, i.e., mean-convex surfaces:
\begin{theorem}
Let $N$ be a {\naf} {\tm} with a closed {\is} $S$ whose {\pc}s lying in $(-1,1)$. If a smooth closed surface
$S_0$ in $N$ is a graph over $S$ such that $S_0$ has positive {\mc} everywhere. Let $\{S(t)\}_{t \in \R}$ be
{\es}s for the {\mcf} equation $(1.1)$ with initial surface $S(0) = S_0$, then:
\begin{enumerate}
\item
$H(S(t)) > 0$ for all $t \in \R$;
\item
the {\hf} $u(\cdot, t)$ is decreasing in $t$.
\end{enumerate}
\end{theorem}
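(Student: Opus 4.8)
The plan is to use the maximum principle on the evolution equation for $H$ together with the height‑function monotonicity already established. First I would recall the evolution equation $(4.4)$: $L(H) = (\partial_t - \Delta)H = H(|A|^2 - 2)$. Since $M^3$ (here $N$) has sectional curvature $-1$, this is a linear parabolic equation in $H$ with zero‑order coefficient $|A|^2 - 2$, which by Theorem~5.1 is uniformly bounded on any time interval where the flow exists, and by Theorem~6.2(i) the flow exists for all $t \in \R$. Applying the parabolic maximum principle to $H$: if $H(S_0) > 0$ everywhere, then letting $H_{\min}(t) = \min_{x \in S(t)} H(x,t)$, at a spatial minimum the Laplacian term is nonnegative, so $\frac{d}{dt} H_{\min} \ge H_{\min}(|A|^2 - 2) \ge -c\, H_{\min}$ for a constant $c$ depending only on the $|A|^2$ bound. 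Since $H_{\min}(0) > 0$ on the closed surface $S_0$, Gronwall gives $H_{\min}(t) \ge H_{\min}(0)\exp(-ct) > 0$ for all $t$, proving part (i).

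For part (ii), I would invoke the evolution equation $(4.8)$ for the height function, $\partial_t u = -H\Theta$. By Lemma~4.4 (and its analogue in the nearly Fuchsian setting, which holds since $S_0$ is a graph over $S$ and Theorem~6.2(ii) preserves the graph property), we have $\Theta(\cdot,t) > 0$ for all $t$. Combined with $H(\cdot,t) > 0$ from part (i), this yields $\partial_t u = -H\Theta < 0$ pointwise, so $u(\cdot,t)$ is strictly decreasing in $t$ at every point of $S$. This is the desired conclusion.

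The main obstacle — really the only nontrivial point — is making sure the maximum‑principle argument for $H$ is legitimate on an unbounded time interval: one needs the coefficient $|A|^2 - 2$ to be bounded on each finite subinterval (which is Theorem~5.1, valid here since its proof only used the graph property and the long‑time existence, both available from Theorem~6.2), and one needs $S(t)$ to stay smooth and compact so that the spatial minimum of $H$ is attained — again guaranteed by Theorem~6.2(i)–(ii). One should also check that the sign of $H$ is consistent with the choice of normal $\nu$ used in $(4.8)$ (the normal pointing away from the reference surface $S$), so that positivity of $H$ together with positivity of $\Theta$ indeed forces $u$ to decrease; this is a bookkeeping matter rather than a genuine difficulty. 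No new estimates beyond those in Sections~4–5 are required.
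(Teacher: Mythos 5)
Your proposal is correct and takes essentially the same route as the paper: apply the maximum principle to the evolution equation $(4.4)$ at the spatial minimum of $H$, obtain a Gronwall lower bound $H_{\min}(t)\ge H_{\min}(0)e^{-ct}>0$, and deduce (ii) from $\partial_t u=-H\Theta$ together with positivity of $\Theta$. One small refinement worth noting: the inequality $H_{\min}(|A|^2-2)\ge -c\,H_{\min}$ is only valid while $H_{\min}\ge 0$, so the Gronwall step must be run as a first-time-to-zero or continuation argument (as the paper does), and once that is in place the constant is simply $c=2$ from the trivial bound $|A|^2\ge 0$ --- the uniform upper bound on $|A|^2$ from Theorem~5.1 that you invoke is not actually needed here.
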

\begin{proof}
Part (ii) is an easy consequence of part (i) and the equation $(4.8)$. We only show {\mcf} preserves the positivity
of {\mc}s.

The existence of such surfaces of positive {\mc} everywhere is obvious: by Proposition 3.4 and proof of
Theorem 4.5, there is a $r_0 > 0$, only depending on $S$, such that each parallel $\{S(r)\}_{r > r_0}$ to $S$ has
positive {\pc}s everywhere. We might just take any such $S(r)$ as the initial surface $S_0$ for the {\mcf} $(1.1)$.

Let $H_{\min}(\cdot,t)$ be the minimal {\mc} of the surface $S(t)$. Recall from the {\ee} $(4.4)$ for $H(\cdot, t)$, i.e.,
\begin{equation*}
H_t =\Delta H + H(|A|^2 -2).
\end{equation*}
And our initial surface satisfies $H(\cdot, 0) > 0$ by assumption. Let $t_0$ be the first time that $H_{\min}(t_0) = 0$,
then at $t_0$ and at where $H_{\min}$ occurs, we have
\begin{equation*}
\ddl{}{t}|_{t=t_0} H_{\min} \ge  H_{\min}(t_0) (|A|^2 -2) = 0.
\end{equation*}
Here we also note the fact that the surface $S(t_0)$ is closed. This now implies that $H \ge 0$ for all time so long
the solution to the {\mcf} exists. This means we can refine the estimate to, at $t_0$ and $H_{\min}$,
\begin{equation*}
\ddl{}{t}|_{t=t_0} H_{\min} \ge  H_{\min}(t_0) (|A|^2 -2) \ge -2H_{\min},
\end{equation*}
which we arrive at the conclusion $H_{\min}(t) \ge H_{\min}(0)e^{-2t} > 0$.
\end{proof}
Similar results hold for {\mcf} with initial surface $S'_0$ of negative {\mc} everywhere, with reversed inequality for
the {\hf}.

\bibliographystyle{amsalpha}
\bibliography{ref-deform}

\providecommand{\bysame}{\leavevmode\hbox to3em{\hrulefill}\thinspace}
\providecommand{\MR}{\relax\ifhmode\unskip\space\fi MR }
\providecommand{\MRhref}[2]{%
  \href{http://www.ams.org/mathscinet-getitem?mr=#1}{#2}
}
\providecommand{\href}[2]{#2}
\begin{thebibliography}{GHW10}

\bibitem[AC96]{AC96}
James Anderson and Richard Canary, \emph{Cores of hyperbolic $3$-manifolds and
  limits of {K}leinian group}, Amer. J. Math \textbf{118} (1996), no.~4,
  745--779.

\bibitem[Ast94]{Ast94}
Kari Astala, \emph{Area distortion of quasiconformal mappings}, Acta Math.
  \textbf{173} (1994), 37--60.

\bibitem[Bar84]{Bar84}
Robert Bartnik, \emph{Existence of maximal surfaces in asymptotically flat
  spacetimes}, Comm. Math. Phys. \textbf{94} (1984), no.~2, 155--175.

\bibitem[BC94]{BC94}
Marc Burger and Richard~D. Canary, \emph{A lower bound on {$\lambda_0$} for
  geometrically finite hyperbolic {$n$}-manifolds}, J. Reine Angew. Math.
  \textbf{454} (1994), 37--57.

\bibitem[Bra78]{Bra78}
Kenneth~A. Brakke, \emph{The motion of a surface by its mean curvature},
  Mathematical Notes, vol.~20, Princeton University Press, Princeton, N.J.,
  1978.

\bibitem[Bro03]{Bro03}
Jeffery Brock, \emph{The {W}eil-{P}etersson metric and volumes of 3-dimensional
  hyperbolic convex cores}, J. Amer. Math. Soc. \textbf{16} (2003), no.~3,
  495--535.

\bibitem[Eck03]{Eck03}
Klaus Ecker, \emph{Mean curvature flow of spacelike hypersurfaces near null
  initial data}, Comm. Anal. Geom. \textbf{11} (2003), no.~2, 181--205.

\bibitem[EH89]{EH89}
Klaus Ecker and Gerhard Huisken, \emph{Mean curvature evolution of entire
  graphs}, Ann. of Math. (2) \textbf{130} (1989), no.~3, 453--471.

\bibitem[EH91]{EH91}
\bysame, \emph{Parabolic methods for the construction of spacelike slices of
  prescribed mean curvature in cosmological spacetimes}, Comm. Math. Phys.
  \textbf{135} (1991), no.~3, 595--613.

\bibitem[Eps86]{Eps86}
Charles~L. Epstein, \emph{The hyperbolic {G}auss map and quasiconformal
  reflections}, J. Reine Angew. Math. \textbf{372} (1986), 96--135.

\bibitem[Fri64]{Fri64}
Avner Friedman, \emph{Partial differential equations of parabolic type},
  Prentice-Hall Inc., Englewood Cliffs, N.J., 1964.

\bibitem[Geh62]{Geh62}
F.~W. Gehring, \emph{Rings and quasiconformal mappings in space}, Trans. Amer.
  Math. Soc. \textbf{103} (1962), 353--393.

\bibitem[GHW10]{GHW10}
Ren Guo, Zheng Huang, and Biao Wang, \emph{Quasi-fuchsian three-manifolds and
  metrics on {\TS}}, Asian J. Math. (2010), to appear, arXiv:0909.2426v2.

\bibitem[GV73]{GV73}
F.W. Gehring and J.~V\"{a}is\"{a}l\"{a}, \emph{{\hd} and quasiconformal
  mappings}, J. London Math. Soc. (2) \textbf{6} (1973), 504--512.

\bibitem[Ham82]{Ham82}
Richard Hamilton, \emph{Three-manifolds with positive ricci curvature}, J.
  Differential Geom. \textbf{17} (1982), 255--306.

\bibitem[Hop89]{Hop89}
Heinz Hopf, \emph{Differential geometry in the large}, Lecture Notes in
  Mathemtaics, vol. 1000, Springer-Verlag, Berlin, 1989.

\bibitem[Hui84]{Hui84}
Gerhard Huisken, \emph{Flow by mean curvature of convex surfaces into spheres},
  J. Differential Geom. \textbf{20} (1984), no.~1, 237--266.

\bibitem[Hui86]{Hui86}
\bysame, \emph{Contracting convex hypersurfaces in riemannian manifolds by
  their {\mc}}, Invent. Math. \textbf{84} (1986), 463--480.

\bibitem[Hui87]{Hui87a}
\bysame, \emph{Deforming hypersurfaces of the sphere by their mean curvature},
  Math. Z. \textbf{195} (1987), no.~2, 205--219.

\bibitem[HW09]{HW09}
Zheng Huang and Biao Wang, \emph{Geometric evolution equations and foliations
  on quasi-fuchsian three-manifolds}, preprint, arXiv:0907.2899, (2009).

\bibitem[KS07]{KS07}
Kirill Krasnov and Jean-Marc Schlenker, \emph{Minimal surfaces and particles in
  3-manifolds}, Geom. Dedicata \textbf{126} (2007), 187--254.

\bibitem[KS08]{KS08}
\bysame, \emph{On the renormalized volume of hyperbolic 3-manifolds}, Com.
  Math. Phy. \textbf{279} (2008), no.~3, 637--668.

\bibitem[Lie96]{Lie96}
Gary~M. Lieberman, \emph{Second order parabolic differential equations}, World
  Scientific Publishing Co. Inc., River Edge, NJ, 1996.

\bibitem[LV73]{LV73}
O.~Lehto and K.~I. Virtanen, \emph{Quasiconformal mappings in the plane},
  second ed., Springer-Verlag, New York, 1973.

\bibitem[Mar74]{Mar74}
Albert Marden, \emph{The geometry of finitely generated kleinian groups}, Ann.
  of Math. (2) \textbf{99} (1974), 383--462.

\bibitem[Mos68]{Mos68}
G.~D. Mostow, \emph{Quasi-conformal mappings in {$n$}-space and the rigidity of
  hyperbolic space forms}, Inst. Hautes \'Etudes Sci. Publ. Math. (1968),
  no.~34, 53--104.

\bibitem[MT98]{MT98}
K.~Matsuzaki and M.~Taniguchi, \emph{Hyperbolic manifolds and {K}leinian
  groups}, Oxford Mathematical Monographs, The Oxford University Press, New
  York, 1998.

\bibitem[Rub05]{Rub05}
J.~Hyam Rubinstein, \emph{Minimal surfaces in geometric 3-manifolds}, Global
  theory of minimal surfaces, Clay Math. Proc., vol.~2, Amer. Math. Soc.,
  Providence, RI, 2005, pp.~725--746.

\bibitem[Sim68]{Sim68}
James Simons, \emph{Minimal varieties in riemannian manifolds}, Ann. of Math.
  (2) \textbf{88} (1968), 62--105.

\bibitem[Sim83]{Sim83}
Leon Simon, \emph{Asymptotics for a class of non-linear {\ee}s, with
  applications to geometric problems}, Ann. of Math. (2) \textbf{118} (1983),
  no.~3, 525--571.

\bibitem[Smi09]{Smi09}
Stanislav Smirnov, \emph{Dimension of quasicircles}, preprint, arXiv:0904.1237,
  (2009).

\bibitem[SSY75]{SSY75}
Richard Schoen, Leon Simon, and Shing-Tung Yau, \emph{Curvature estimates for
  minimal hypersurfaces}, Acta Math. \textbf{134} (1975), no.~3-4, 275--288.

\bibitem[SU82]{SU82}
J.~Sacks and K.~Uhlenbeck, \emph{Minimal immersions of closed {R}iemann
  surfaces}, Trans. Amer. Math. Soc. \textbf{271} (1982), no.~2, 639--652.

\bibitem[Sul87]{Sul87}
Dennis Sullivan, \emph{Related aspects of positivity in {R}iemannian geometry},
  J. Differential Geom. \textbf{25} (1987), no.~3, 327--351.

\bibitem[SY79]{SY79}
Richard Schoen and Shing-Tung Yau, \emph{Existence of incompressible minimal
  surfaces and the topology of three-dimensional manifolds with nonnegative
  scalar curvature}, Ann. of Math. (2) \textbf{110} (1979), no.~1, 127--142.

\bibitem[Thu82]{Thu82}
William~P. Thurston, \emph{The geometry and topology of three-manifolds}, 1982,
  Princeton University Lecture Notes.

\bibitem[Uhl83]{Uhl83}
Karen~K. Uhlenbeck, \emph{Closed minimal surfaces in hyperbolic
  {$3$}-manifolds}, Seminar on minimal submanifolds, Ann. of Math. Stud., vol.
  103, Princeton Univ. Press, Princeton, NJ, 1983, pp.~147--168.

\bibitem[Unt03]{Unt03}
Philip Unterberger, \emph{Evolution of radial graphs in hyperbolic space by
  their mean curvature}, Comm. Anal. Geom. \textbf{11} (2003), no.~4, 675--695.

\bibitem[Wan08]{Wan08}
Biao Wang, \emph{Foliations for quasi-{F}uchsian $3$-manifolds}, Jour. of Diff.
  Geometry (2008), to appear.

\bibitem[Wan10]{Wan10}
\bysame, \emph{Minimal surfaces in quasi-{F}uchsian $3$-manifolds}, Math. Ann.
  (2010), to appear.

\end{thebibliography}
\end{document}